  \newtheorem{Corollary}{Corollary}[section]
  \newtheorem{Lemma}{Lemma}[section]
   \newtheorem{Theorem}{Theorem}[section]
\begin{document}

\title{Substitutions, Tiling Dynamical Systems and Minimal Self-Joinings} 
\author[Y. Son]{YOUNGHWAN SON \\
 Faculty of Mathematics and Computer Science  \\
 Weizmann Institute of Science, Rehovot 7610001, Israel\\
\lowercase{email: younghwan.son@weizmann.ac.il}}
\bigskip

\gdef\shorttitle{ }
\maketitle

\setcounter{section}{0}


\begin{abstract} 
We investigate substitution subshifts and tiling dynamical systems arising from the substitutions (1) $\theta: 0 \rightarrow 001, 1 \rightarrow 11001$ and (2) $\eta: 0 \rightarrow 001, 1 \rightarrow 11100$.  We show that the substitution subshifts arising from $\theta$ and $\eta$ have minimal self-joinings and are mildly mixing. We also give a criterion for 1-dimensional tiling systems arising from $\theta$ or $\eta$ to have minimal self-joinings. We apply this to obtain examples of mildly mixing 1-dimensional tiling systems.
 \end{abstract}

\section{Introduction}

A measure preserving system, denoted by $\bold{X} = (X, \mathcal{X}, \mu, T_G)$, consists of a probability space $(X, \mathcal{X}, \mu)$ together with a measure preserving action $T_G = (T_g)_{g \in G}$ of a locally compact group $G$. In this paper we mainly consider the cases $G = \mathbb{Z}$ and $G = \mathbb{R}$, and denote the corresponding measure preserving system by $\bold{X} = (X, \mathcal{X}, \mu, T)$ or $\bold{X} = (X, \mathcal{X}, \mu, (T_t)_{(t \in \mathbb{R})})$ respectively. For $G= \mathbb{R}$, we also assume that $G$ acts continuously in the sense that for every $A \in \mathcal{X}$ and $\epsilon > 0$, there exists a neighborhood $V$ of the identity in $G$ such that $\mu(A \triangle T_g A) < \epsilon$ for $g \in V$. 
A measure preserving system $\bold{X}$ is called 
\begin{enumerate}
\item {\it{ergodic}} if any set $A$ with $\mu(A \triangle T_gA)=0$ for all $g\in G$ is of measure $0$ or $1$.
\item {\it{weakly mixing}} if the diagonal action $(T_g \times T_g)_{g \in G}$ on $(X \times X, \mathcal{X} \otimes \mathcal{X}, \mu \otimes \mu)$ is ergodic. For an abelian group $G$, it is equivalent to the fact that  the only eigenfunctions of $T_G$ are constant functions. Recall that $f \in L^2(X)$ is called an eigenfunction if $f(T_g x) = \lambda(g) f(x)$ for some $\lambda \in \hat{G}$, the dual group of $G$.  
\item {\it{mildly mixing}} if there is no non-constant rigid function. Recall that a function $f \in L^2(X)$ is called {\it{rigid}} if there is a sequence $t_i \in G$ with $t_i \rightarrow \infty$ such that $f \circ T_{t_i}$ converges to $f$ in $L^2(X)$. (We write $t_i \rightarrow \infty$ if, for any compact set $K \subset G$, there is a positive integer $N$ such that $t_i \notin K$ if $i \geq N$.) 
\item {\it{rigid}} if there is a sequence $t_i \in G$ with $t_i \rightarrow \infty$ such that for every $f \in L^2(X)$, $f \circ T_{t_i}$ converges to $f$ in $L^2(X)$.
\item {\it{strongly mixing}} if  for any $f_1, f_2 \in L^2(X)$ and for any sequence $(t_i)$ with $t_i \rightarrow \infty,$  $$\lim\limits_{t_i \rightarrow \infty} \int f_1(x) \, f_2(T_{t_i} x) \, d \mu = \int f_1 \, d \mu \int f_2 \, d\mu.$$
\end{enumerate}
Note that strong mixing implies mild mixing and mild mixing implies weak mixing.
 
In this paper we will consider the substitutions (1) $\theta: 0 \rightarrow 001, 1 \rightarrow 11001$ and (2) $\eta: 0 \rightarrow 001, 1 \rightarrow 11100$. Specifically we will investigate the mixing properties of the substitution subshifts and tilings systems arising from these substitutions. To facilitate the discussion, we present now some basic facts on substitution dynamical systems and tiling dynamical systems.
 
  An {\it{alphabet}} $\mathcal{A}$ is a finite set  of symbols called {\it{letters}}, denoted as $\mathcal{A} = \{ 0,1,2, \dots, s-1 \}$ or as letters $\mathcal{A} =\{ a, b, \dots \}$. A finite string of letters is called a {\it{word}} or {\it{block}} and the set of all finite words over $\mathcal{A}$ is denoted by $\mathcal{A}^* = \bigcup_{k \geq 1} \mathcal{A}^k$. Elements of $\mathcal{A}^{\mathbb{Z}}$ are called {\it{sequences}} over the alphabet $\mathcal{A}$. 
 One can define a metric $d$ on $\mathcal{A}^{\mathbb{Z}}$ by $d(x,y) = 2^{-n}$, where $n$ is the smallest integer with $x_n \ne y_n$ or $x_{-n} \ne y_{-n}$ for $x=(x_n)$ and $y=(y_n)$ in $\mathcal{A}^{\mathbb{Z}}$. With this metric $\mathcal{A}^{\mathbb{Z}}$ is a compact metric space and the shift map $T$ given by $(Tx)_n = x_{n+1}$ for a sequence $x=(x_n)$ is a homeomorphism on $\mathcal{A}^{\mathbb{Z}}$. A pair $(X,T)$, where $X$ is a closed $T$ invariant subset of $\mathcal{A}^{\mathbb{Z}}$, is called a {\it{subshift}}.
 
  A {\it{substitution}} $\zeta$ is a map from $\mathcal{A}$ to $\mathcal{A}^*$, which induces a map from $\mathcal{A}^*$ to $\mathcal{A}^*$ given by $\zeta (b_0 b_1 \cdots b_n) = \zeta(b_0) \zeta(b_1) \cdots \zeta(b_n).$ 
   A word $u$ is $\zeta$-{\it{admissible}} if $u$ occurs in $\zeta^m(a)$ for some $m \in \mathbb{N}$ and $a \in \mathcal{A}$. Define the substitution space $X_{\zeta}$ as the set of all sequences $x=(x_n) \in \mathcal{A}^{\mathbb{Z}}$ such that every word $x_i x_{i+1} \cdots x_j$ $(i \leq j)$ is $\zeta$-admissible.  A substitution $\zeta$ is called {\it{primitive}} if there exists $m \in \mathbb{N}$ such that for any $i,j \in \mathcal{A}$, $i$ occurs in $\zeta^m(j)$. It is well-known that if a substitution $\zeta$ is primitive, then $(X_{\zeta}, T)$ is minimal and has a unique $T$-invariant probability measure $\mu$ on the Borel $\sigma$-algebra $\mathcal{X}$ of $X$ (see \cite{Q}). A measure preserving system of the form $(X_{\zeta}, \mathcal{X}, \mu,T)$ is called a {\it{substitution subshift}} or a {\it{substitution dynamical system}}.

 One dimensional substitution tiling systems were investigated by Berend and Radin (\cite{BR}) and Clark and Sadun (\cite{CS}). A {\it{tile}} $I =[a,b]$ in $\mathbb{R}$ is a closed interval with the positive length $|I| = b-a >0$. 
A {\it{tiling}} of $\mathbb{R}$ is a set $\mathcal{T}$ of tiles such that $\mathbb{R} = \bigcup \{I : I \in \mathcal{T}\}$ and distinct tiles have non-intersecting interiors. We also assume that the tiling has finitely many tiles up to translations.
  A finite collection of tiles in $\mathcal{T}$ is called a {\it{$\mathcal{T}$-patch}} and  two patches $P_1$ and $P_2$ are said to be {\it{equivalent}} if $P_2 = P_1 + t$ for some $t \in \mathbb{R}$. For a tiling $\mathcal{T}$ of $\mathbb{R}$, {\it{the tiling space}} $X_{\mathcal{T}}$ is defined as the set of all tilings $\mathcal{S}$ of $\mathbb{R}$ with the property that every $\mathcal{S}$-patch is equivalent to some $\mathcal{T}$-patch.
 Define a metric on $X_{\mathcal{T}}$ as following:
$$d (\mathcal{T}_1, \mathcal{T}_2) = \min \{ \frac{1}{\sqrt{2}}, \overline{d}(\mathcal{T}_1, \mathcal{T}_2) \},$$
where $$\overline{d}(\mathcal{T}_1, \mathcal{T}_2) = \inf \{ \epsilon : \, \textrm{for some} \, t \in \mathbb{R}^d  \, \textrm{with} \, |t| < \epsilon, \mathcal{T}_1 + t \,\, \textrm{and} \, \mathcal{T}_2 \, \textrm{agree on} \, B (0, 1/\epsilon )    \}.$$
Then $X_{\mathcal{T}}$ is a compact space and the translation action $T_t (\mathcal{S}) = \mathcal{S} - t$ is continuous for each $t \in \mathbb{R}$.

Given a sequence $x =(x_n) \in \{0,1, \dots, s-1\}^{\mathbb{Z}}$ and a collection of intervals $\mathcal{I} = \{J_0, J_1, \dots , J_{s-1} \}$, a {\it{tiling}} $\mathcal{T}_x := \{I_i : i \in \mathbb{Z}\}$ of $\mathbb{R}$ is obtained by 
\begin{enumerate}
\item taking $I_i$ to be a translate of $J_{x_i}$,
\item putting the left end point of $I_0$ at the origin of $\mathbb{R}$,
\item requiring that $\mathbb{R} = \cup I_i$ and $I_i \cap I_{i+1}$ is a singleton for each $i \in \mathbb{Z}$.
\end{enumerate}  

A tiling $\mathcal{T}$ is called {\it{substitution tiling}} if it is obtained from a sequence $x $ in a substitution space $X_{\zeta}$. If a substitution $\zeta$ is primitive, then there exists a unique probability measure $\mu$ on the Borel $\sigma$-algebra $\mathcal{X}$ of the tiling space $X_{\mathcal{T}}$. A measure preserving system of the form $(X_{\mathcal{T}}, \mathcal{X}, \mu, (T_t)_{t \in \mathbb{R}})$ arising from a substitution tiling $\mathcal{T}$ is called a {\it{substitution tiling system}}. Note that the definition of a substitution tiling space does not depend on the choice of the sequence $x$ in the substitution space.

Dekking and Keane considered in \cite{DK} substitutions (1) $\theta: 0 \rightarrow 001, 1 \rightarrow 11001$ and (2) $\eta: 0 \rightarrow 001, 1 \rightarrow 11100$ and showed that the substitution dynamical systems arising from $\theta$ and $\eta$ are weakly mixing but not strongly mixing. The substitution $\theta$ is obtained by writing $a=00$ and $b=1$ in the Toeplitz substitution $a \rightarrow abab, b \rightarrow bbab$ (\cite{JaKe}, \cite{Kak}) and it had been shown by Kakutani \cite{Kak} that the symbolic dynamical system obtained by doubling the symbol $a$ in the Toeplitz substitution is weakly mixing.  
Moreover, by using the method in \cite{BR} and \cite{CS}, one can show that the tiling systems arising from these substitutions and two intervals $J_0$ and $J_1$, where $\frac{|J_0|}{|J_1|}$ is irrational, are weakly mixing (cf. Lemma in \cite{BR} and Theorem 2.5 in \cite{CS}) but not strongly mixing (cf. Theorem 2.2 in \cite{CS}).

These results lead to the natural question of whether these systems are mildly mixing. In this paper we will answer this question in the affirmative by showing that these systems have minimal self-joinings, which implies the mild mixing property.

 The notion of {\it{minimal self-joinings}} (MSJ) was introduced by Rudolph \cite{Ru1}. 
 A two-fold {\it{self-joining}} of an ergodic system $\bold{X} = (X, \mathcal{X}, \mu, T_G)$  is a probability measure $\lambda$ on $X \times X$ such that $\lambda$ is invariant under the diagonal action $T_G \otimes T_G$ and the marginals of $\lambda$ on each $X$ are $\mu$. An ergodic system $\bold{X} = (X, \mathcal{X}, \mu, T_G)$ is said to have minimal self-joinings of order $2$ if every ergodic two-fold self-joining $\lambda$ is either $\mu \times \mu$ or the image of $\mu$ under the map $x \rightarrow (x, T_g x)$ for some $g \in G$.
 
  We denote by $C(\bold{X})$ the centralizer of a measure preserving system $\bold{X} = (X, \mathcal{X}, \mu, T_G)$, that is, the set of all measure preserving transformations commuting with $T_g$ for all $g \in G$. It is known \cite{Ru1} that an ergodic system with minimal self-joinings has the somewhat unusual property that it has trivial centralizer and no proper factor. Theorems \ref{mild cor} and \ref{mild R} below show that a weak mixing measure preserving $\mathbb{Z}$ or $\mathbb{R}$ system with minimal self-joinings is mildly mixing. 
 
In this paper our main result is the following.
\begin{Theorem} [cf. Theorem \ref{msj theorem}, \ref{msj tiling}, \ref{eta}]
\label{main}
Let  $\theta$ be the substitution $\theta(0) = 001,$ $\theta(1) = 11001$ and let $\eta$ be the substitution with $\eta(0) = 001$, $\eta(1) = 11100$.
 \begin{enumerate}[(i)]
\item The substitution dynamical systems associated with $\theta$ and $\eta$ have minimal self-joinings, and hence are mildly mixing.
\item The substitution tiling spaces arising from the substitutions $\theta$ and $\eta$ using two intervals of irrational ratio have minimal self-joinings, and hence are mildly mixing.
 \end{enumerate}
\end{Theorem}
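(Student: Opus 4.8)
The plan is to follow the classical strategy for establishing minimal self-joinings (MSJ) for substitution systems, adapted to the two specific substitutions $\theta$ and $\eta$, and then to bootstrap from the $\mathbb{Z}$-action (subshift) to the $\mathbb{R}$-action (tiling system). For part (i), I would first record that both $\theta$ and $\eta$ are primitive and aperiodic, so $(X_\zeta, T)$ is minimal and uniquely ergodic; moreover Dekking--Keane already give weak mixing, which is a prerequisite for MSJ. The core of the argument is a ``unique recognizability / rigidity of matching'' analysis: given an ergodic two-fold self-joining $\lambda$ on $X_\zeta \times X_\zeta$, one analyzes, for $\lambda$-a.e.\ pair $(x,y)$, how long blocks in $x$ and $y$ can be made to agree under relative shifts. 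Because both substitutions have the property that $\zeta(0)$ and $\zeta(1)$ begin with distinguishable prefixes and the substitution is recognizable (bijective on the relevant return words), one shows that if two points of $X_\zeta$ share a sufficiently long common block then their preimages under $\zeta$ share a proportionally long common block, with the relative displacement constrained to a bounded set that contracts under desubstitution. Iterating the desubstitution, the displacement is forced to be $0$ modulo the structure, so $\lambda$ is supported on the graph of a power of $T$; combined with weak mixing (which kills the off-diagonal ``product-like'' components coming from eigenvalue relations), one concludes $\lambda$ is either $\mu\times\mu$ or a graph joining $(\mathrm{id}\times T^g)_*\mu$. This is the content I would attribute to Theorems \ref{msj theorem} and \ref{eta}.

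For part (ii), I would set up the suspension/tiling picture explicitly: the tiling system $(X_{\mathcal{T}},(T_t)_{t\in\mathbb{R}})$ with intervals $J_0,J_1$ of irrational length ratio is a special flow (suspension) over the subshift $(X_\zeta,T)$ under the roof function taking value $|J_0|$ on the cylinder $[0]$ and $|J_1|$ on $[1]$. A two-fold self-joining of the flow restricts, via the base transformation induced on a transversal, to a self-joining of the base $\mathbb{Z}$-system together with a coupling of the two real ``heights''; the heart of the argument is to show that the irrationality of $|J_0|/|J_1|$ forces the height coupling to be deterministic (a fixed real shift), so that the flow joining is itself a graph joining. Here I would invoke the criterion announced as Theorem \ref{msj tiling}: MSJ of the base subshift plus a non-degeneracy (irrationality) condition on the roof function yields MSJ of the suspension flow, because any ergodic self-joining of the flow that is not the product must project to a graph joining on the base, and then the cocycle equation for the roof functions, being non-cohomologous to anything degenerate precisely when the length ratio is irrational, pins down the vertical offset. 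Once MSJ holds in both cases, mild mixing follows from Theorems \ref{mild cor} (for $\mathbb{Z}$) and \ref{mild R} (for $\mathbb{R}$), together with the weak mixing already known.

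The main obstacle, and where I expect the real work to lie, is the desubstitution/recognizability step for $\eta$: unlike $\theta$ (which comes from the well-understood Toeplitz substitution $a\to abab,\ b\to bbab$ by the substitution $a=00,\ b=1$, so one can piggyback on known structure), the substitution $\eta:0\to001,\ 1\to11100$ has $\zeta(1)$ ending in $00$, which creates potential ambiguity in locating substitution-block boundaries near runs of $0$'s; one must verify carefully that $\eta$ is still (one- or two-sided) recognizable and that the set of admissible relative displacements between two matching blocks is finite and strictly contracts under $\eta^{-1}$, with no stray fixed displacement surviving the iteration except the trivial one. Handling the length-ratio irrationality in part (ii) is the second delicate point: one needs that the difference of roof functions along any would-be non-trivial relative displacement is not a coboundary with the wrong value, which is exactly where $|J_0|/|J_1|\notin\mathbb{Q}$ enters, and this must be combined cleanly with the base-MSJ classification rather than reproved from scratch. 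Everything else — primitivity, unique ergodicity, the passage from MSJ to mild mixing — is routine given the results cited in the excerpt.
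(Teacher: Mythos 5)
Your sketch correctly assembles the surrounding scaffolding (primitivity, unique ergodicity, weak mixing from Dekking--Keane, the passage from order-$2$ MSJ to all orders via Ryzhikov, and MSJ implies mild mixing), but it misses the one idea that actually drives the proof, and the argument you substitute for it would not close. The crux is not to show that long matched blocks force $x$ and $y$ onto the same orbit; it is to dispose of the ergodic joinings whose generic points $(x,y)$ are \emph{not} on the same orbit. For these the paper exploits the specific shape of the substitutions: with $A_n=\theta^n(00)=00C_n$ and $B_n=\theta^n(1)=1C_n$, the two $n$-blocks differ only in a spacer of length $2$ versus length $1$. A combinatorial structure lemma (Lemma \ref{orbit}; Lemma \ref{orbit2} for $\eta$) produces, for infinitely many $n$ and within distance $O(l_n)$ of the origin, an occurrence of $C_n1C_n$ in $x$ and of $C_n00C_n$ in $y$ whose initial $C_n$'s overlap on a definite proportion of the window; jumping across the spacer shifts $x$ by $t_k$ but $y$ by $t_k+1$. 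Feeding this into Rudolph's generic-point criterion (Lemma \ref{msj lemma}) shows the joining is $I\times T$-invariant, and Lemma \ref{disjoint lemma} then forces it to equal $\mu\times\mu$. Your proposal instead concludes from a desubstitution/contraction argument that the joining ``is supported on the graph of a power of $T$'' and lets weak mixing ``kill the product-like components''; that cannot be right, since $\mu\times\mu$ is itself an ergodic self-joining that is not a graph, and weak mixing alone never excludes non-graph joinings. Without the spacer-mismatch mechanism the non-orbit case is untreated.

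For part (ii) you propose a suspension/cocycle route: base MSJ plus an irrationality condition on the roof implies MSJ of the flow. No such general criterion is established in the paper, and it is false without further hypotheses (a constant roof over an MSJ base yields a flow with a rotation factor). What the paper actually does is rerun the same overlap argument directly on the tilings: writing $J_a=J_0J_0$ and $J_b=J_1$, the patches corresponding to $C_n1C_n$ and $C_n00C_n$ differ across the spacer by the translation $\alpha=|J_b|-|J_a|$, and the $\mathbb{R}$-version of Rudolph's lemma (Lemma \ref{msj tiling lemma}) gives $I\times T_\alpha$-invariance; the irrationality of $|J_0|/|J_1|$ enters only to make the flow (hence $T_\alpha$) weakly mixing so the product conclusion applies. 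Finally, for $\eta$ the real work is not re-verifying recognizability but the case analysis in Lemma \ref{orbit2}: because $\eta(1)=11100$ ends in $00$, one must run through the possible relative offsets $k$ and extract one of three block-pair configurations $(A_nB_nA_n,B_nB_nB_n)$, $(B_nA_nB_n,B_nB_nB_n)$, $(A_nA_n,B_nB_n)$, each with its own shift $t_k$; this combinatorial content is absent from your outline.
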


The first example of a substitution system with minimal self-joinings was Chacon's system; this was shown by del Junco, Rahe, and Swanson in \cite{JRS}. In \cite{JP}, del Junco and Park constructed examples of 1-dimensional $\mathbb{R}$-flows with minimal self-joinings based on Chacon's substitution, which can be viewed as 1-dimensional mildly mixing tiling dynamical systems. 
In this paper we employ methods similar to those used in \cite{JRS} and \cite{JP} to prove Theorem \ref{main}. 

Here is a brief description of the content of this paper.

In Section 2 we present basic notions and properties of joinings of dynamical systems.

In Section 3 we consider the substitution $\theta: 0 \rightarrow 001, 1 \rightarrow 11001$. First we prove that the substitution dynamical system associated with $\theta$ has minimal self-joinings and is mildly mixing. Then we show that the tiling dynamical system arising from the substitution $\theta$ and two intervals of irrational ratio has minimal self-joinings and is mildly mixing.

In Section 4 we investigate the substitution $\eta: 0 \rightarrow 001, 1 \rightarrow 11100$. We show that the substitution dynamical system associated with $\eta$ and the tiling dynamical system  arising from the substitution $\eta$ and two intervals of irrational ratio have minimal self-joinings and are mildly mixing.

Finally, in Section 5 we present an example of a tiling dynamical system which is rigid and weakly mixing. This example is constructed based on the dynamical system investigated by del Junco and Rudolph in \cite{JR}. 

\section{Joinings}

Let $\bold{X} = (X, \mathcal{X}, \mu, T_G)$ and $\bold{Y} = (Y, \mathcal{Y}, \nu, S_G)$ be two ergodic systems. A {\it{joining}} of $\bold{X}$ and $\bold{Y}$ is a probability measure $\lambda$ on $X \times Y$ such that (1) $\lambda$ is invariant under the diagonal action $T_G \otimes S_G= (T_g \times S_g)_{g \in G}$ and (2) $\lambda$ has marginals $\mu$ on $X$ and $\nu$ on $Y$: $\lambda(A \times Y) = \mu(A)$ and $\lambda(X \times B) = \nu(B)$ for any $A \in \mathcal{X}$ and $B \in \mathcal{Y}$. Let $J(\bold{X} , \bold{Y})$ be the set of all joinings of $\bold{X}$ and $\bold{Y}$. 
When $\bold{Y} = \bold{X}$, joinings are called {\it{self-joinings (of order 2)}}.  In this case $J(\bold{X} , \bold{X})$ is denoted by $J(\bold{X})$. Given $k \in \mathbb{N}$, one can define a $k$-fold self-joining $\lambda$ of an ergodic system $\bold{X}$ as a probability measure on $X^k$, which is invariant under $T_G \otimes \cdots \otimes T_G$ and whose marginals on each $X$ are $\mu$.

The centralizer, $C(\bold{X})$, is the set of all invertible measure preserving maps of $\bold{X}$ that commute with $T_g$ for all $g \in G$.  For $S_1, \dots , S_k \in C(\bold{X})$, the corresponding off-diagonal measure is the image of $\mu$ under the map $x \rightarrow (S_1(x), \cdots , S_k(x))$ of $X$ into $X^k$.
An ergodic system $\bold{X} = (X, \mathcal{X}, \mu, T_G)$ is called {\it{simple of order $k$}} (or {\it{$k$-simple}}) if any $k$-fold ergodic self-joining  $\lambda$ of $\bold{X}$ is a product of off-diagonal measures: there exists a partition of $J_1, \dots J_m$ of $\{1,2, \cdots, k\}$ such that for each $J_l$ the projection of $\lambda$ on $\prod_{i \in J_l } X$ is off-diagonal and $\lambda$ is the product of these off-diagonal measures.  We say that $\bold{X}$ has minimal self-joinings of order $k$ if $\bold{X}$ is $k$-simple with $C(\bold{X}) = \{ T_g : g \in G \}$.     
$\bold{X}$ is called simple or has minimal self-joinings (MSJ) of all orders if it is simple or has minimal self-joinings of order $k$ for any $k \in \mathbb{N}$ respectively.  

 J. King showed in (\cite{Ki2}) that 4-fold simple systems and 4-fold minimal self-joinings systems 
 are simple of all orders and minimal self-joinings of all orders respectively. Later Glasner, Host, and Rudolph improved King's result by showing that if $(X, \mathcal{X}, \mu, T)$ is weak-mixing and 3-simple, then it is simple.

 An action $(T_g)_{g \in G}$ of a group $G$ is called {\it{totally weakly mixing}} if each infinite subaction contains a mixing sequence $(T_{t_i})$, that is, $\lim\limits_{t_i \rightarrow \infty} \int f(x) g(T_{t_i} x) \, d \mu \rightarrow \int f \, d \mu \int g \, d \mu$. The following result is due to Ryzhikov.
  \begin{Theorem}[\cite{Ry} Theorem 2]
\label{R}
Let a 2-simple action $(T_g)_{g \in G}$ of a countable Abelian group be totally weakly mixing but not strongly mixing. In this case, $(T_g)_{g \in G}$ is simple of any order. 
\end{Theorem}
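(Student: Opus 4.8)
The plan is to reduce the statement to the results quoted above. Being totally weakly mixing implies being weakly mixing, so by the theorem of Glasner--Host--Rudolph (stated in the excerpt for $\mathbb{Z}$; in the general countable abelian setting one may instead invoke King's theorem on $4$-fold simple systems) it suffices to upgrade $2$-simplicity to $3$-simplicity (resp.\ $4$-simplicity; I will carry out the $3$-fold case, the other being identical with heavier bookkeeping). Thus I fix an arbitrary ergodic $3$-fold self-joining $\lambda$ of $\bold{X}$, living on $X\times X\times X$, with pairwise marginals $\lambda_{12},\lambda_{13},\lambda_{23}$, and I must show that $\lambda$ is a product of off-diagonal measures.

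First I would dispose of the configurations in which some coordinate is slaved to another. Each $\lambda_{ij}$ is a (possibly non-ergodic) $2$-fold self-joining, so by $2$-simplicity almost every ergodic component of $\lambda_{ij}$ is either $\mu\times\mu$ or an off-diagonal $(x,Sx)_{*}\mu$ with $S\in C(\bold{X})$. If for some pair $\{i,j\}$ a positive fraction of these ergodic components are off-diagonals, then coordinate $j$ is $\lambda$-almost surely an equivariant measurable function of coordinate $i$; writing $\lambda$ as the push-forward of a $2$-fold self-joining of the two remaining coordinates through that graph map and invoking $2$-simplicity once more, one checks directly that $\lambda$ is a product of off-diagonal measures. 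Hence the only case left, and the crux of the proof, is that $\lambda$ is \emph{pairwise independent}: $\lambda_{12}=\lambda_{13}=\lambda_{23}=\mu\times\mu$. Since $\bold{X}$ is weakly mixing, all Cartesian powers $\bold{X}^{n}$ are ergodic, so in this case $\lambda$ is an ergodic joining of $\bold{X}$ (first coordinate) with the ergodic system $\bold{X}\times\bold{X}$ (second and third coordinates), pairwise independent in the obvious sense, and the goal becomes to show that it is the product joining $\mu\times\mu\times\mu$.

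The heart of the matter is therefore to establish this \emph{pairwise independent implies independent} property --- something a general weakly mixing $2$-simple system need not have --- and this is where the two extra hypotheses are used together. From the failure of strong mixing one obtains, after passing to a subsequence, a sequence $s_{i}\to\infty$ for which the off-diagonals $(\mathrm{id}\times T_{s_{i}})_{*}\Delta_{*}\mu$ converge weak-$*$ to a self-joining $\rho\ne\mu\times\mu$; since $\mu\times\mu$ is the unique product self-joining, $2$-simplicity forces the ergodic decomposition of $\rho$ to contain an off-diagonal, so the weak closure of $\{T_{g}:g\in G\}$ in the Markov operators contains a nontrivial limit of graphs. Total weak mixing, on the other hand, supplies, inside every infinite subgroup of $G$ --- in particular inside the subgroup one is led to look at when transporting the bad joining $\lambda$ --- a mixing sequence $(T_{t_{i}})$; pushing $\lambda$ by $T_{t_{i}}$ in the first coordinate and passing to a weak-$*$ limit splits the first coordinate off as independent of the pair formed by the other two, and combining this decoupling with the limit-of-graphs information and with $2$-simplicity of the $(2,3)$-marginal pins $\lambda$ down to $\mu\times\mu\times\mu$. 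Once pairwise independent ergodic $3$-fold self-joinings are shown to be trivial, $\bold{X}$ is $3$-simple, and the quoted King / Glasner--Host--Rudolph theorem yields simplicity of all orders.

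I expect the last paragraph to be the real obstacle: the reduction to the pairwise-independent case is routine, but extracting genuine rigidity --- a nontrivial limit of graphs, i.e.\ control of the weak closure of $\{T_{g}\}$ --- out of the mere \emph{failure} of strong mixing, and then choosing the decoupling sequences produced by total weak mixing so that they are compatible with that rigidity rather than trivialising it, is the delicate point where all three hypotheses ($2$-simplicity, total weak mixing, non-strong mixing) must be made to interact.
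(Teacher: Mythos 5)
This statement is quoted in the paper as an external result (Ryzhikov, Theorem 2 of \cite{Ry}); the paper supplies no proof of it, so there is no in-paper argument to compare yours against. Judged on its own, your proposal has the right architecture --- reduce via King / Glasner--Host--Rudolph to showing that every pairwise independent ergodic $3$-fold self-joining is $\mu\times\mu\times\mu$, after first disposing of the case where some pair of coordinates is attached by an element of the centralizer --- and that first reduction is indeed standard for $2$-simple weakly mixing systems (though ``one checks directly'' hides a genuine lemma: from non-product $\lambda_{ij}$ one must extract a single $S\in C(\bold{X})$ with $x_j=Sx_i$ $\lambda$-a.s., not merely a positive fraction of off-diagonal ergodic components).

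The genuine gap is exactly where you predict it: the pairwise-independence-implies-independence step is asserted, not proved, and it is the entire content of the theorem. Two specific problems. First, applying a mixing sequence $(T_{t_i})$ to the first coordinate of $\lambda$ and taking a weak-$*$ limit does decouple that coordinate (since $f\circ T_{t_i}\rightharpoonup\int f\,d\mu$ weakly in $L^2$), but the limit being $\mu\times\mu\times\mu$ says nothing about $\lambda$ itself; by the same token every weakly mixing system admits sequences along which translates of any joining converge to the product. Second, the non-mixing hypothesis gives a sequence $g_i\to\infty$ with $T_{g_i}\to P=a\nu+(1-a)\Theta$, $a>0$, $\nu$ carried by graphs; to exploit this one must show that $(T_{g_i}\times I\times I)_*\lambda$ converges to $a\,\lambda'+(1-a)\,\mu\times\lambda_{23}$ for an explicit $3$-fold joining $\lambda'$ built from $\nu$ and $\lambda$, and then run a fixed-point / extremality argument (this is Ryzhikov's ``joining arithmetic'') to force $\lambda=\mu^{\times 3}$. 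Your sentence ``combining this decoupling with the limit-of-graphs information \dots pins $\lambda$ down'' names the ingredients but performs no such combination, and the role of \emph{total} weak mixing (mixing sequences inside every infinite subaction, needed to handle the subgroup generated by the rigidity times) is likewise only gestured at. As written, the proposal is a correct plan with the decisive step missing.
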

If $T$ is a weak mixing transformation, then $(T^n)_{n \in \mathbb{Z}}$ is totally weak mixing.
Hence if $(X, \mathcal{X}, \mu, T)$ has two-fold minimal self-joinings and is weakly mixing, but not strongly mixing, then it has minimal self-joinings of all orders. In fact, the same result works for $G= \mathbb{R}$.  

\begin{Corollary}
\label{cor R}
If $(X,\mathcal{X}, \mu, (T_t)_{(t \in \mathbb{R})})$ is weakly mixing  and not strongly mixing and has two-fold minimal self-joinings, then the system has minimal self-joinings of all orders.
\end{Corollary}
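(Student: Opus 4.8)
The plan is to reduce to Ryzhikov's Theorem \ref{R} by passing to a countable dense subgroup of $\mathbb{R}$ and then lifting the conclusion back to the flow. Fix a countable dense subgroup $H\subset\mathbb{R}$ (say $H=\mathbb{Q}$) and consider the $H$-action $(T_h)_{h\in H}$ on $(X,\mathcal{X},\mu)$. The backbone of the argument is a two-way transfer principle between the flow and this $H$-action. First, since the $\mathbb{R}$-action is assumed continuous, for measurable rectangles $A_1\times\cdots\times A_k$ one has $|\lambda(\prod_iT_t^{-1}A_i)-\lambda(\prod_iT_s^{-1}A_i)|\le\sum_i\mu(A_i\triangle T_{s-t}^{-1}A_i)\to0$ as $t\to s$, so any $H$-invariant probability measure $\lambda$ on $X^k$ is automatically invariant under the whole flow; in particular every set that is $H$-invariant is $\mathbb{R}$-invariant. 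Secondly, since an $\mathbb{R}$-ergodic measure has trivial $\sigma$-algebra of $\mathbb{R}$-invariant sets and these contain the $H$-invariant sets, every $\mathbb{R}$-ergodic measure is $H$-ergodic. Combining the two, the ergodic $k$-fold self-joinings of the $H$-action and of the flow coincide, for every $k$.

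Next I would verify that the $H$-action satisfies the hypotheses of Theorem \ref{R}. It is $2$-simple: an ergodic two-fold self-joining of $(T_h)_{h\in H}$ is, by the transfer principle, an ergodic two-fold self-joining of the flow, hence by the two-fold minimal self-joinings hypothesis equal to $\mu\times\mu$ or to the graph of some $T_g$ with $g\in\mathbb{R}$, and since $T_g$ commutes with every $T_h$ both of these are products of off-diagonals for the $H$-action. It is totally weakly mixing: for any $\alpha\ne0$ the spectral measure of the unitary $T_\alpha$ is the pushforward of the flow's (non-atomic, by weak mixing) spectral measure under $s\mapsto e^{2\pi i\alpha s}$, hence still non-atomic, so $T_\alpha$ is weakly mixing and therefore $(T_\alpha^n)_{n\in\mathbb{Z}}$ contains a mixing sequence; as every infinite subgroup of $H$ contains a cyclic subgroup $\mathbb{Z}\alpha$, this supplies a mixing sequence in every infinite subaction. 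And it is not strongly mixing, for if it were, then approximating an arbitrary $t_i\to\infty$ in $\mathbb{R}$ by $h_i\in H$ with $|t_i-h_i|\to0$ and using measure-continuity of the flow ($\|f\circ T_{t_i}-f\circ T_{h_i}\|_2=\|f\circ T_{t_i-h_i}-f\|_2\to0$) would make the flow strongly mixing, against the hypothesis. By Theorem \ref{R} the $H$-action is simple of all orders.

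Finally I would transport simplicity back to the flow. Let $\lambda$ be an ergodic $k$-fold self-joining of the flow; by the transfer principle it is an ergodic $k$-fold self-joining of the $H$-action, hence a product of off-diagonal measures built from elements of $C((T_h)_{h\in H})$. It remains to identify this centralizer: if $S$ commutes with all $T_h$, its graph $(\mathrm{id}\times S)_*\mu$ is an $H$-invariant self-joining of the flow, hence flow-invariant by transfer, and a flow-invariant graph joining is precisely the graph of a transformation commuting with the whole flow; since the flow has two-fold minimal self-joinings, $C((T_t)_{t\in\mathbb{R}})=\{T_t:t\in\mathbb{R}\}$, so $S=T_t$ for some $t$, and thus $C((T_h)_{h\in H})=\{T_t:t\in\mathbb{R}\}$ as well. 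Therefore every ergodic $k$-fold self-joining of the flow is a product of genuine off-diagonals $x\mapsto(T_{g_i}x)_i$; that is, the flow is $k$-simple, and together with the triviality of its centralizer this means it has minimal self-joinings of order $k$. Since $k$ was arbitrary, the flow has minimal self-joinings of all orders. The only substantive ingredient is Ryzhikov's theorem; the point requiring care is the two-directional passage between the flow and the countable subgroup $H$, which rests entirely on continuity of the $\mathbb{R}$-action together with density of $H$ and connectedness of $\mathbb{R}$. (Alternatively, one may simply note that the proof of Theorem \ref{R} carries over to $G=\mathbb{R}$ with only notational changes.)
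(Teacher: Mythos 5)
Your proof is correct and follows exactly the route the paper takes: restrict the flow to the countable dense subgroup $\mathbb{Q}\subset\mathbb{R}$, verify that this subaction is $2$-simple, totally weakly mixing (via weak mixing of each $T_\alpha$, $\alpha\neq 0$) and not strongly mixing, apply Ryzhikov's Theorem~\ref{R}, and transfer simplicity and the identification $C=\{T_t: t\in\mathbb{R}\}$ back to the flow. The paper's own proof is only a three-line sketch of this argument; you have supplied the transfer principle and centralizer details it leaves implicit, all of which check out.
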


\begin{proof}
If $(T_t)_{(t \in \mathbb{R})}$ is a weak mixing $\mathbb{R}$-flow on $(X,\mathcal{X}, \mu)$, then $T_{\alpha}$ is a weak mixing transformation for any $\alpha \ne 0$. From Theorem \ref{R}, $\bold{\tilde{X}} = (X,\mathcal{X}, \mu, (T_t)_{(t \in \mathbb{Q})})$ is simple of all orders  and $C(\bold{\tilde{X}}) = \{T_t : t \in \mathbb{R} \}$. Hence $(X,\mathcal{X}, \mu, (T_t)_{(t \in \mathbb{R})})$ has minimal self-joinings of all orders.
 \end{proof}

Now we present a mixing property of minimal self-joinings of order 2.
\begin{Theorem}[see \cite{G} Corollary 12.5  and \cite{Ru} Theorem 6.12] 
\label{msj property}
Let $\bold{X} = (X, \mathcal{X}, \mu, T)$ be an invertible, non-atomic dynamical system with twofold minimal self-joinings. Then,
\begin{enumerate}[(i)]
\item $C(\bold{X}) = \{T^n : n \in \mathbb{Z} \}$.
\item $(X, \mathcal{X}, \mu, T)$ is prime, i.e., $(X, \mathcal{X}, \mu, T)$ does not have a proper factor.
\item $(X, \mathcal{X}, \mu, T)$ is weakly mixing.
\end{enumerate}
\end{Theorem}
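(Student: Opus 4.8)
The plan is, for each part, to construct a well-chosen self-joining and run its ergodic components through the defining dichotomy of twofold minimal self-joinings: every ergodic self-joining of $\bold{X}$ is either $\mu\times\mu$ or an off-diagonal measure $\Delta_n$, the image of $\mu$ under $x\mapsto(x,T^nx)$ for some $n\in\mathbb{Z}$. Two remarks will be used repeatedly. (a) Since $\mu$ is non-atomic, Fubini gives $(\mu\times\mu)(\{(x,Sx):x\in X\})=0$ for any measurable $S:X\to X$; hence a self-joining carried by the graph of such an $S$ is never $\mu\times\mu$, and a countable union of such graphs is still $(\mu\times\mu)$-null. (b) If a $T\times T$-invariant probability $\lambda$ with both marginals $\mu$ is decomposed into ergodic components $\lambda=\int\lambda_\omega\,dP(\omega)$, then averaging the ($T$-invariant) marginals of the $\lambda_\omega$ recovers the ergodic measure $\mu$, so by uniqueness of the ergodic decomposition of $\mu$ almost every $\lambda_\omega$ again has both marginals $\mu$, i.e.\ is an ergodic self-joining to which the dichotomy applies.

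For (i), given $S\in C(\bold{X})$ let $\lambda_S$ be the image of $\mu$ under $x\mapsto(x,Sx)$. Since $S$ commutes with $T$, this map is a measure-theoretic isomorphism of $(X,\mu,T)$ onto $(X\times X,\lambda_S,T\times T)$, so $\lambda_S$ is ergodic; being carried by the graph of $S$ it is not $\mu\times\mu$ by remark (a), so $\lambda_S=\Delta_n$ for some $n$, and comparing supports gives $S=T^n$ $\mu$-a.e. Hence $C(\bold{X})=\{T^n:n\in\mathbb{Z}\}$. I would prove (iii) next, since (ii) relies on it. Suppose $\mu\times\mu$ is not $T\times T$-ergodic. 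Its ergodic decomposition is then non-trivial, so by remark (b) almost every component $\lambda_\omega$ is an ergodic self-joining different from $\mu\times\mu$, hence some $\Delta_{n(\omega)}$. Collecting the countably many values of $n(\omega)$ yields $\mu\times\mu=\sum_n c_n\Delta_n$ with $c_n\ge 0$, $\sum_n c_n=1$; but the right side is carried by $\bigcup_n\{(x,T^nx)\}$, which is $(\mu\times\mu)$-null by remark (a) — a contradiction. Thus $\mu\times\mu$ is ergodic and $\bold{X}$ is weakly mixing.

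For (ii), let $\mathcal{F}\subseteq\mathcal{X}$ be a $T$-invariant sub-$\sigma$-algebra with factor $(Y,T_Y)$ and factor map $\pi$, and form the relatively independent self-joining $\lambda=\mu\otimes_{\mathcal{F}}\mu$, characterized by $\int f\otimes g\,d\lambda=\int E(f\mid\mathcal{F})E(g\mid\mathcal{F})\,d\mu$; it has marginals $\mu$ and is carried by the fibre product $\{(x,y):\pi x=\pi y\}$. By remark (b) and the dichotomy, almost every ergodic component of $\lambda$ is $\mu\times\mu$ or some $\Delta_n$. A $\mu\times\mu$-component can occur only if $\mathcal{F}$ is trivial (otherwise $\mu\times\mu$ would charge the complement of the fibre product), in which case $\lambda=\mu\times\mu$ and the factor is trivial. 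For a $\Delta_n$-component, being carried by the fibre product forces $\pi\circ T^n=\pi$, i.e.\ $T_Y^n=\mathrm{id}$; as $(Y,T_Y)$ is ergodic this makes $Y$ a finite cyclic rotation, which, unless $Y$ is trivial, contradicts the weak mixing established in (iii) (it would give $\bold{X}$ a non-trivial finite rotation factor). Hence $n=0$ for all such components, so $\lambda$ is the diagonal measure; then $\int f\otimes f\,d\lambda=\|E(f\mid\mathcal{F})\|_{L^2}^2$ must equal $\|f\|_{L^2}^2$ for every $f$, and since conditional expectation is an orthogonal projection this means every $f\in L^2(\mathcal{X})$ is $\mathcal{F}$-measurable, i.e.\ $\mathcal{F}=\mathcal{X}$. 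Therefore $\bold{X}$ is prime.

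The step I expect to be most delicate is remark (b): one must verify that the ergodic components of the genuinely non-ergodic joinings $\mu\times\mu$ (in (iii)) and $\mu\otimes_{\mathcal{F}}\mu$ (in (ii)) are still honest self-joinings, so that the minimal self-joining hypothesis may legitimately be applied to them. Beyond that, the only real work is the bookkeeping in (ii) identifying which off-diagonals are compatible with being supported on the fibre product — and the observation that ruling out the surviving finite-rotation factor requires (iii), which is why the parts are proved in the order $\mathrm{(i)},\mathrm{(iii)},\mathrm{(ii)}$; everything else is short.
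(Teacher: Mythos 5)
Your proof is correct, and it is essentially the standard argument from the references the paper cites for this theorem (Glasner, Corollary 12.5 and Rudolph, Theorem 6.12) -- the paper itself gives no proof, only the citation. The three constructions you use (graph joinings for the centralizer, the ergodic decomposition of $\mu\times\mu$ for weak mixing, and the relatively independent joining over a factor for primality), together with the verification in your remark (b) that ergodic components of a self-joining are again self-joinings, are exactly how those sources prove it.
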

 Theorem \ref{msj property} $(iii)$ can be strengthened as follows:
\begin{Theorem}
\label{mild cor}
Let $\bold{X} = (X, \mathcal{X}, \mu, T)$ be an invertible, non-atomic dynamical system with twofold minimal 
self-joinings. Then it is mildly mixing.
\end{Theorem}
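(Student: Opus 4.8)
The plan is to argue by contradiction. Suppose $\bold{X}$ is not mildly mixing, so that there are a non-constant $f \in L^2(X)$ and a sequence $t_i \to \infty$ with $f \circ T^{t_i} \to f$ in $L^2$; replacing $f$ by $f - \int f\,d\mu$ and rescaling, I may assume $\int f\,d\mu = 0$ and $\|f\|_2 = 1$. Write $U$ for the Koopman operator of $T$. Since $L^2(X)$ is separable, the unit ball of bounded operators is compact and metrizable in the weak operator topology, so after passing to a subsequence of $(t_i)$ I may assume $U^{t_i} \to P$ weakly. A routine check shows that $P$ is a Markov operator commuting with $U$, hence $P$ corresponds to a two-fold self-joining $\lambda \in J(\bold{X})$; moreover the rigidity relation $f \circ T^{t_i} \to f$ forces $Pf = f$.

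Next I would exploit the structure of self-joinings of an MSJ system. By definition $\bold{X}$ is $2$-simple, and by Theorem \ref{msj property}$(i)$ one has $C(\bold{X}) = \{T^n : n \in \mathbb{Z}\}$; hence every ergodic two-fold self-joining of $\bold{X}$ is either $\mu \times \mu$ or the graph of some $T^n$. Decomposing $\lambda$ into ergodic components and averaging the corresponding Markov operators yields $P = \sum_{n \in \mathbb{Z}} c_n U^n + c_\infty E$, where $E$ is the orthogonal projection onto the constants, all $c_n, c_\infty \geq 0$, and $\sum_n c_n + c_\infty = 1$. Since $Ef = (\int f\,d\mu)\cdot 1 = 0$, the identity $Pf = f$ becomes $f = \sum_n c_n U^n f$. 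Taking norms and using $\|U^n f\| = \|f\| = 1$ gives $1 \leq \sum_n c_n \leq 1$, so there is equality throughout the triangle inequality, and therefore all the vectors $U^n f$ with $c_n > 0$ coincide with a single fixed unit vector. If two distinct indices $n_1 \neq n_2$ had $c_{n_1}, c_{n_2} > 0$, then $f = U^{n_1 - n_2} f$, i.e. $f$ would be invariant under $T^{n_1 - n_2}$; but $\bold{X}$ is weakly mixing (Theorem \ref{msj property}$(iii)$), so $T^m$ is ergodic for every $m \neq 0$, which would make $f$ constant, a contradiction. Hence exactly one coefficient is nonzero, $c_{n_0} = 1$, and the same ergodicity argument excludes $n_0 \neq 0$. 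Therefore $P = \mathrm{Id}$.

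Thus $U^{t_i} \to \mathrm{Id}$ weakly, and since these operators are unitary this upgrades to $g \circ T^{t_i} \to g$ in $L^2$ for every $g \in L^2(X)$; in other words $\bold{X}$ itself is rigid, equivalently $T^{t_i} \to \mathrm{Id}$ in $\mathrm{Aut}(X,\mu)$ with the weak topology. This is the step at which I would extract the contradiction. The centralizer $C(\bold{X}) = \{T^n : n \in \mathbb{Z}\}$ is a closed subgroup of the Polish group $\mathrm{Aut}(X,\mu)$, hence is itself a Polish group; being also countable, it must be discrete, since by the Baire category theorem one of the closed singletons covering it has nonempty interior and then by homogeneity every point is isolated. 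In particular $\mathrm{Id} = T^0$ is isolated in $C(\bold{X})$, so $T^{t_i} = \mathrm{Id}$ for all large $i$. But $t_i \to \infty$, so $T$ would have finite nonzero order; this contradicts weak mixing (under which $T^{t_i}$ is ergodic when $t_i \neq 0$) together with non-atomicity (under which $\mathrm{Id}$ is not ergodic). Hence $\bold{X}$ is mildly mixing.

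I expect the last paragraph to be the main obstacle: the reduction to ``$\bold{X}$ is rigid'' is soft, but converting rigidity into a genuine contradiction requires combining the algebraic triviality of the centralizer with the topological fact that a countable closed subgroup of a Polish group is discrete — and one should also record that $(X,\mu)$ may be assumed standard, so that $\mathrm{Aut}(X,\mu)$ is Polish (this is automatic in all the applications). An alternative route to the reduction identifies the space of functions rigid along $(t_i)$ with the $L^2$-space of a $T$-invariant factor and then invokes primeness (Theorem \ref{msj property}$(ii)$); it reaches the same point but needs the extra verification that this space is genuinely the $L^2$-space of a sub-$\sigma$-algebra, which is slightly more delicate than the joinings argument above.
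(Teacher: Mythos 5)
Your proof is correct, but it follows a genuinely different route from the one in the paper, in both halves of the argument. The paper's proof is very short: a non-constant rigid function generates a non-trivial rigid factor, primeness (Theorem \ref{msj property}$(ii)$) then forces the whole system to be rigid, and one quotes the classical fact that a rigid transformation of a non-atomic space has uncountable centralizer (\cite{KSS}, \cite{Ki}), contradicting Theorem \ref{msj property}$(i)$. You replace the first step by a joinings computation: passing to a WOT-limit $P$ of the Koopman operators $U^{t_i}$, identifying $P$ with a two-fold self-joining, and using the MSJ classification to write $P=\sum_n c_n U^n + c_\infty E$; the equality case in the triangle inequality together with weak mixing (so that $T^m$ is ergodic for $m\neq 0$) correctly forces $P=\mathrm{Id}$, hence rigidity of the whole system. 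This is more work than invoking primeness (which you note as the alternative), but it is self-contained and uses only the joining classification, which is in the spirit of the paper. You replace the second step by the observation that $C(\bold{X})=\{T^n: n\in\mathbb{Z}\}$ is a countable closed subgroup of the Polish group $\mathrm{Aut}(X,\mu)$, hence discrete by Baire category, so $T^{t_i}\to \mathrm{Id}$ weakly would force $T^{t_i}=\mathrm{Id}$ for large $i$, impossible since $t_i\to\infty$ and no nonzero power of a weakly mixing transformation of a non-atomic space is the identity. This substitutes a clean topological-group argument for the external citation that rigidity implies an uncountable centralizer; the only hypothesis you need to record (and do) is that $(X,\mu)$ is a standard Lebesgue space so that $\mathrm{Aut}(X,\mu)$ is Polish. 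Both proofs are sound; yours is longer but self-contained modulo standard facts about Markov operators and $\mathrm{Aut}(X,\mu)$.
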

\begin{proof}
Suppose that $ T^{n_i} f \rightarrow f$ in $L^2$ for some non-constant $f$. Then there exists a non-trivial rigid factor. From $(ii)$ of Theorem \ref{msj property}, $(X, \mathcal{X}, \mu, T)$ is rigid. It is well-known that if $(X,\mathcal{X}, \mu)$ is non-atomic and $\bold{X}$ is rigid, then $C(\bold{X})$ is uncountable (see \cite{KSS} or \cite{Ki}), which contradicts $(i)$ of Theorem \ref{msj property}. 
\end{proof} 

One can easily show that a result similar to Theorem \ref{msj property} holds for $\mathbb{R}$-actions:
\begin{Theorem}
\label{msj property R}
Let $\bold{X}=(X,\mathcal{X}, \mu, (T_t)_{t \in \mathbb{R}}) $ be an invertible, non-atomic dynamical system with twofold minimal self-joinings. Then,
\begin{enumerate}[(i)]
\item $C(\bold{X}) = \{T_t : t \in \mathbb{R} \}$.
\item $(X, \mathcal{X}, \mu, (T_t)_{t \in \mathbb{R}})$ is prime.
\end{enumerate}
\end{Theorem}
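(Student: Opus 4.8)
The plan is to mirror the proof of Theorem \ref{msj property}, since both assertions reduce to the classification of ergodic two-fold self-joinings provided by the hypothesis, now with the flow elements $T_t$ in place of the powers $T^n$.

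For $(i)$, given $S \in C(\bold X)$ I would introduce the off-diagonal measure $\lambda_S$ attached to the pair $(\mathrm{id}_X, S)$, i.e.\ the image of $\mu$ under $x \mapsto (x,Sx)$. Because $S$ commutes with every $T_t$, this $\lambda_S$ is invariant under the diagonal flow $(T_t \times T_t)_{t\in\mathbb R}$ and has both marginals equal to $\mu$, so $\lambda_S \in J(\bold X)$; and since $x \mapsto (x,Sx)$ is a measure isomorphism of $\bold X$ onto $(X\times X, \lambda_S, (T_t\times T_t))$, the joining $\lambda_S$ is ergodic. The hypothesis then forces $\lambda_S$ to be $\mu \times \mu$ or the image of $\mu$ under $x \mapsto (x, T_g x)$ for some $g \in \mathbb R$. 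The first option is ruled out because $\lambda_S$ lives on the graph of $S$, which is $(\mu\times\mu)$-null by non-atomicity of $\mu$; the second gives $S = T_g$ $\mu$-a.e.\ after comparing the two graphs. As $\{T_t : t\in\mathbb R\} \subseteq C(\bold X)$ trivially, this proves $(i)$.

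For $(ii)$, I would suppose $\mathcal F \subseteq \mathcal X$ is a $(T_t)$-invariant sub-$\sigma$-algebra with $\mathcal F \neq \mathcal X$ mod $\mu$ and aim to show $\mathcal F$ is trivial. The tool is the relatively independent self-joining over $\mathcal F$, the joining $\lambda$ determined by $\int f(x)g(y)\,d\lambda = \int E(f\mid\mathcal F)E(g\mid\mathcal F)\,d\mu$. Choosing $f,g$ to be $\mathcal F$-measurable shows $\lambda$ sits on the set of pairs $\mathcal F$ fails to separate, while $\lambda$ differs from the diagonal measure $\Delta$ (the image of $\mu$ under $x\mapsto(x,x)$) because $\lambda = \Delta$ would give $\|E(f\mid\mathcal F)\|_2 = \|f\|_2$ for all $f\in L^2$, i.e.\ $\mathcal F = \mathcal X$. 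Decomposing $\lambda$ into ergodic components, each one is (by hypothesis) $\mu\times\mu$ or the image of $\mu$ under some $x\mapsto(x,T_g x)$; on a component of the latter kind $y = T_g x$ a.e., and since such pairs are not separated by $\mathcal F$, the map $T_g$ fixes a.e.\ point of the factor attached to $\mathcal F$, so its fixed-point set there is flow-invariant of full measure, and aperiodicity of the flow on that factor forces $g=0$, making the component $\Delta$. Hence $\lambda = a\Delta + (1-a)(\mu\times\mu)$ with $a\in[0,1)$ (using ergodicity of $\mu\times\mu$ to pool the remaining components, and $a<1$ since $\lambda\neq\Delta$); evaluating the defining identity at $f = \mathbf 1_A$, $A\in\mathcal F$, yields $(1-a)\mu(A)(1-\mu(A)) = 0$, so $\mathcal F$ is trivial and $\bold X$ is prime.

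The step I expect to be the main obstacle is the control of the ergodic decomposition in $(ii)$: it relies on the flow --- equivalently, the flow on the factor attached to $\mathcal F$ --- being aperiodic and on $\mu\times\mu$ being ergodic, which together amount to $\bold X$ being weakly mixing. For $\mathbb Z$-actions weak mixing comes free from Theorem \ref{msj property}$(iii)$; in the $\mathbb R$-setting I would either invoke it directly for the systems at hand, which are already known to be weakly mixing, or establish it from the self-joining structure as in \cite{G}. With weak mixing in hand the off-diagonal ergodic components collapse onto $\Delta$ and the elementary moment identity finishes the argument.
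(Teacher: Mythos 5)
The paper offers no proof of this theorem (it is dismissed with ``one can easily show''), so the benchmark is the standard argument from \cite{G} and \cite{Ru} that the author is implicitly invoking, and your write-up is exactly that argument transported to flows. Part $(i)$ is complete and correct: the off-diagonal joining attached to $S\in C(\bold{X})$ is ergodic because $\bold{X}$ is, non-atomicity kills the product option via Fubini applied to the graph of $S$, and matching the two graph measures gives $S=T_g$ a.e.

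Part $(ii)$, however, contains the gap you yourself flag, and you should be aware that it cannot be closed from the stated hypotheses. The problematic step is forcing the off-diagonal ergodic components $\lambda_{T_g}$ of the relatively independent joining over $\mathcal F$ to collapse onto the diagonal: all that $\pi(x)=\pi(y)$ $\lambda$-a.e.\ gives you is that $T_g$ acts as the identity on the factor attached to $\mathcal F$, and without weak mixing (equivalently, ergodicity of each $T_g$, $g\neq 0$) this does not force $g=0$. Your second fallback --- deriving weak mixing ``from the self-joining structure as in \cite{G}'' --- is precisely where the argument genuinely fails for $\mathbb R$-actions: the paper's own example two paragraphs below, the transitive flow $T_t x = x+t$ mod $1$ on $[0,1)$, is invertible, non-atomic, and has two-fold minimal self-joinings in the sense used here (its ergodic self-joinings are exactly the line measures, i.e.\ the off-diagonals $\lambda_{T_c}$), yet it is not weakly mixing and is not prime (the map $x\mapsto 2x$ mod $1$ generates a proper nontrivial flow-invariant sub-$\sigma$-algebra). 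So statement $(ii)$ is false as literally stated and needs weak mixing, or freeness of the flow, as an additional hypothesis. Your first fallback --- invoking weak mixing directly for the systems to which the theorem is actually applied --- is the correct repair for every use of the theorem in this paper, and once weak mixing is in hand the remainder of your argument (pooling the product components by ergodicity of $\mu\times\mu$ and the moment identity $(1-a)\mu(A)(1-\mu(A))=0$) is sound.
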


 It is not hard to check that the transitive flow $T_t x = x+ t $ mod $1$ on $[0,1)$ has minimal self-joinings but is not weakly mixing. Note that in this example, $(T_t)_{t \in \mathbb{R}}$ is not free; $T_n = Id$ for all $n \in \mathbb{Z}$. The following theorem shows that if the action of $\mathbb{R}$ is free, the situation is different.
 
\begin{Theorem}
\label{mild R}
Let $\bold{X} = (X,\mathcal{X}, \mu, (T_t)_{t \in \mathbb{R}}) $ be a dynamical system with minimal self-joinings. If $T_t \ne Id$ for all $t \ne 0$, then $\bold{X}$ is mildly mixing.
\end{Theorem}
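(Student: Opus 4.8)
The plan is to argue by contradiction. Assume $\bold{X}$ is not mildly mixing; I will first promote this to rigidity of the entire flow, and then exploit the topological group structure of $\mathrm{Aut}(X,\mu)$ together with the freeness hypothesis to contradict Theorem \ref{msj property R}.

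First I would record a preliminary reduction: freeness forces $(X,\mathcal{X},\mu)$ to be non-atomic. Indeed, if $x_0$ were an atom, applying the assumed continuity of the action to $A=\{x_0\}$ with $\epsilon=\tfrac12\mu(\{x_0\})$ shows $T_g x_0 = x_0$ for $g$ in a neighbourhood of $0$, hence (the stabilizer being a subgroup of $\mathbb{R}$) for all $g\in\mathbb{R}$; since minimal self-joinings presupposes ergodicity, one atom would make $X$ purely atomic, and the previous sentence would then give $T_t=\mathrm{Id}$ for all $t$, contradicting $T_t\ne\mathrm{Id}$ for $t\ne0$. Thus Theorem \ref{msj property R} applies. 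Now, since $\bold{X}$ is not mildly mixing there is a non-constant $f\in L^2(X)$ and a sequence $t_i\to\infty$ with $f\circ T_{t_i}\to f$ in $L^2$. As in the proof of Theorem \ref{mild cor}, this produces a non-trivial rigid factor of $\bold{X}$, and by primeness (Theorem \ref{msj property R}(ii)) that factor must be $\bold{X}$ itself. Hence $\bold{X}$ is rigid: there is a sequence $s_i\to\infty$ in $\mathbb{R}$ with $g\circ T_{s_i}\to g$ in $L^2$ for every $g\in L^2(X)$, i.e. $T_{s_i}\to\mathrm{Id}$ in the weak topology of the Polish group $\mathrm{Aut}(X,\mu)$ of measure preserving automorphisms of $(X,\mu)$.

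Next I would observe that the centralizer is a closed subgroup containing the closure of the flow. For each fixed $t$, both left and right multiplication by $T_t$ are continuous on $\mathrm{Aut}(X,\mu)$, so $\{S:ST_t=T_tS\}$ is closed; hence $C(\bold{X})=\bigcap_{t\in\mathbb{R}}\{S:ST_t=T_tS\}$ is a closed subgroup of $\mathrm{Aut}(X,\mu)$, and since the flow is abelian it contains $\{T_t:t\in\mathbb{R}\}$ and therefore also $\overline{\{T_t:t\in\mathbb{R}\}}$. But Theorem \ref{msj property R}(i) gives $C(\bold{X})=\{T_t:t\in\mathbb{R}\}$, so $\{T_t:t\in\mathbb{R}\}$ is a closed subgroup of $\mathrm{Aut}(X,\mu)$, hence itself a Polish group in the subspace topology. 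Freeness now says that $t\mapsto T_t$ is a continuous bijective homomorphism of $\mathbb{R}$ onto this Polish group, so by the open mapping theorem for Polish groups it is a homeomorphism. This contradicts the conclusion of the previous paragraph, where $T_{s_i}\to\mathrm{Id}$ while $s_i\to\infty$, and the contradiction proves the theorem.

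I expect the main subtlety to be the (standard but genuinely needed) step that a non-constant rigid function yields a non-trivial rigid \emph{factor}, after which primeness does the work. Note in particular that the $\mathbb{Z}$-style argument used for Theorem \ref{mild cor} — "a rigid non-atomic system has uncountable centralizer" — is not available here, since $C(\bold{X})=\{T_t:t\in\mathbb{R}\}$ is already uncountable. What replaces it is the observation that for a \emph{free} rigid flow the set $\{T_t:t\in\mathbb{R}\}$ would have to be simultaneously closed in $\mathrm{Aut}(X,\mu)$ and homeomorphic, via the parametrization $t\mapsto T_t$, to $\mathbb{R}$, which rigidity forbids; this is exactly where the hypothesis $T_t\ne\mathrm{Id}$ for $t\ne0$ enters, and it cannot be dropped, as the translation flow on $[0,1)$ shows.
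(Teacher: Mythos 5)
Your proof is correct and follows essentially the same route as the paper's: a non-constant rigid function yields a rigid factor, primeness (Theorem \ref{msj property R}(ii)) promotes this to rigidity of $\bold{X}$, and the identification $C(\bold{X})=\{T_t: t\in\mathbb{R}\}\cong\mathbb{R}$ as topological groups rules out $T_{s_i}\to\mathrm{Id}$ along $s_i\to\infty$. You supply more detail than the paper does at the two points it leaves implicit --- that freeness forces non-atomicity so Theorem \ref{msj property R} applies, and that the closedness of the centralizer in $\mathrm{Aut}(X,\mu)$ plus the open mapping theorem for Polish groups is what makes $t\mapsto T_t$ a homeomorphism onto its image --- but the argument is the same one.
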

\begin{proof}
By Theorem \ref{msj property R} $(i)$, the topological groups $\mathbb{R}$ and $C(\bold{X})$ are isomorphic. Suppose that there exists a non-constant rigid function in $L^2(X)$. Then $\bold{X}$ is rigid from Theorem \ref{msj property R} $(ii)$. Hence $T_{t_k} \rightarrow Id$ for some increasing sequence $t_k$, which is impossible in $\mathbb{R}$. 
\end{proof}

\section{the substitution $\theta: 0 \rightarrow 001, 1 \rightarrow 11001$}
 
 \subsection{The Substitution Dynamical System}\mbox{}
 
 In this section we will consider the substitution dynamical system arising from the substitution $\theta$.  As we have mentioned above, it was proved in \cite{DK} that this system is weakly mixing but not strongly mixing. We will strengthen this result by proving the following theorem.
\begin{Theorem}
\label{msj theorem}
The substitution dynamical system arising from the substitution $\theta: 0 \rightarrow 001, 1 \rightarrow 11001$ has minimal self-joinings and is mildly mixing.
\end{Theorem}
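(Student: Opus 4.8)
The plan is to deduce everything from the single statement that every ergodic two-fold self-joining $\lambda$ of $(X_\theta,\mathcal X,\mu,T)$ is either $\mu\times\mu$ or an off-diagonal $\Delta_{T^k}=(\operatorname{id}\times T^k)_*\mu$ with $k\in\ZZ$. Since $\theta$ is primitive, $(X_\theta,T)$ is minimal and uniquely ergodic, and by \cite{DK} it is weakly mixing but not strongly mixing; so once this dichotomy is proved, trivial centralizer follows at once (any invertible measure preserving $S$ commuting with $T$ gives the graph joining $\Delta_S$, which for a non-atomic system is ergodic and $\neq\mu\times\mu$, hence equals some $\Delta_{T^k}$, i.e.\ $S=T^k$), two-fold minimal self-joinings hold, Theorem \ref{mild cor} gives mild mixing, and Ryzhikov's Theorem \ref{R} upgrades this to minimal self-joinings of all orders.

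The first step is the hierarchical (Rokhlin tower) structure. Because $X_\theta$ is infinite, $\theta$ is recognizable (Mossé; cf.\ \cite{Q}), so every $x\in X_\theta$ has a unique desubstitution, and iterating one gets, for each $n$, a unique bi-infinite factorization $x=\cdots\theta^n(c_{-1})\theta^n(c_0)\theta^n(c_1)\cdots$ with $(c_i)\in X_\theta$ and coordinate $0$ sitting in the block $\theta^n(c_0)$ at a well-defined position $p_n(x)$. The block boundaries of this level-$n$ factorization form a set $\Lambda_n(x)\subset\ZZ$, and $X$ becomes, up to a null set, a two-column tower of heights $|\theta^n(0)|$ and $|\theta^n(1)|$, both $\sim 4^n$; note these two heights are always distinct.

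Now fix an ergodic $\lambda\neq\mu\times\mu$ and a $\lambda$-generic pair $(x,y)$. The crux is a synchronization dichotomy: either (I) there are an integer $k$, bounded in absolute value by a constant depending only on $\theta$, and an $N$, with $\Lambda_n(x)=\Lambda_n(T^ky)$ for all $n\ge N$; or (II) $\lambda=\mu\times\mu$. This rests on a combinatorial ``bounded mismatch'' lemma for $\theta$: there is $L=L(\theta)$ such that in any $\theta$-admissible bi-infinite sequence, any two positions at which a level-$n$ factorization could legally begin and which lie within distance $|\theta^n(0)|$ of each other differ by at most $L$. One proves this by hand from $\theta(0)=001$, $\theta(1)=11001$ --- exploiting, e.g., the identity $\theta(1)=11\,\theta(0)$ and the fact that runs of $1$'s in $X_\theta$ have length at most $3$ --- in the same spirit as the separation property of the Chacon blocks used in \cite{JRS}. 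Feeding this into the $T\times T$-invariance of $\lambda$, together with a Borel--Cantelli/relative-independence argument as in \cite{JRS}, shows that the offset $p_n(x)-p_n(y)$ cannot keep re-randomizing as $n$ grows: it is eventually constant (case (I)) unless the level-$n$ block-type sequences of $x$ and $y$ decorrelate completely, which yields $\lambda=\mu\times\mu$ (case (II)). In case (I), since $\Lambda_{n}(x)=\Lambda_n(T^ky)$ and the two column heights $|\theta^n(0)|,|\theta^n(1)|$ are distinct, the level-$n$ block \emph{types} of $x$ and $T^ky$ also coincide for $n\ge N$; as $n\to\infty$ this forces $x$ and $T^ky$ to agree on arbitrarily long windows around $0$, so $x=T^ky$ and $\lambda=\Delta_{T^k}$.

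I expect the main obstacle to be precisely this combinatorial/ergodic core: establishing the bounded-mismatch lemma for the words $\theta^n(0),\theta^n(1)$ --- controlling every way a long $\theta^n$-block of one point can be straddled by the $\theta^n$-blocks of another --- and then running the measure-theoretic ``eventually synchronized or product'' dichotomy on the joining, i.e.\ showing the offset process cannot re-randomize from level $n$ to level $n+1$. Everything else (primitivity and recognizability, the reduction to two-fold self-joinings, the passage from equal cutting lattices to $x=T^ky$, and the deduction of mild mixing and of minimal self-joinings of all orders from the cited theorems) is routine.
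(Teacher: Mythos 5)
Your overall architecture matches the paper's: reduce to the two-fold dichotomy, dispose of the same-orbit case as an off-diagonal, and then invoke Ryzhikov's theorem and the mild-mixing corollary. The recognizability/hierarchical-tower setup is also the right first step (it is the content of the paper's Lemma \ref{rep}). But the core of the argument --- how one actually concludes $\lambda=\mu\times\mu$ when $x$ and $y$ are not on the same orbit --- is missing from your sketch. First, your case (I) is vacuous: if $\Lambda_n(x)=\Lambda_n(T^ky)$ for even a single $n$, then (since the two level-$n$ heights are distinct, so an interval's length determines its type) $x=T^ky$, contradicting the standing assumption. So for a non-orbit pair the whole proof rests on your case (II), and there the step ``the block-type sequences decorrelate completely, which yields $\lambda=\mu\times\mu$'' is an assertion, not an argument; a Borel--Cantelli/``offset cannot re-randomize'' heuristic does not produce independence of the two marginals, and it is not what \cite{JRS} does either.

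The mechanism you are missing is the off-by-one periodicity trick, and it uses a specific combinatorial feature of $\theta$ that your proposal never touches: writing $A_n=\theta^n(00)=00C_n$ and $B_n=\theta^n(1)=1C_n$, the two $n$-blocks share the long common \emph{suffix} $C_n$ and differ only by prefixes of lengths $2$ and $1$. The paper's Lemma \ref{orbit} shows that for a non-orbit pair a type mismatch ($B_n$ vs.\ $A_n$) occurs within $O(l_n)$ of the origin for infinitely many $n$ (this itself needs the argument that the minimal mismatch index $|s_n|$ decreases as $n$ grows). At such a mismatch, $x$ contains $C_n1C_n$ (two copies of $C_n$ at distance $l_n+1$) while $y$ contains $C_n00C_n$ (distance $l_n+2$). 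These overlapping windows occupy a definite positive fraction of a symmetric interval around $0$, and genericity of $(x,y)$ fed into Rudolph's Lemma 6.15 (the paper's Lemma \ref{msj lemma}) forces $\lambda$ to be invariant under $I\times T$ as well as $T\times T$; the disjointness of the identity from an ergodic map (Lemma \ref{disjoint lemma}) then gives $\lambda=\mu\times\mu$. Without identifying this common-suffix structure and the resulting $I\times T$-invariance, the non-orbit case of your dichotomy has no proof, so the proposal as it stands has a genuine gap at its central step.
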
  
  
   Before proving this result, let us present basic concepts and results of a primitive substitution system $(X_{\zeta}, \mathcal{X}, \mu, T)$ arising from a substitution $\zeta$.
Given a word $u = u_0 u_1 \cdots u_n$, let 
 $$[u] = [u_0 u_1 \cdots u_n] := \{ x \in X_{\zeta} : x_0 = u_0, x_1=u_1, \dots, x_n=u_n \}.$$
These sets and their translates are called {\it{cylinder sets}}, which are clopen and span the topology of $X_{\zeta}$. 
It is known that if $\zeta$ is primitive, $(X_{\zeta}, \mathcal{X}, \mu, T)$ is uniquely ergodic (\cite{Q}), that is, $\mu$ is the only $T$ invariant probability measure on $X_{\zeta}$. In this case we have a strong version of Ergodic Theorem: 
\begin{Theorem}  [see \cite{Ox} or \cite{Fu}]
\label{unique}
$(X, \mathcal{B}, \nu, T)$ is uniquely ergodic if and only if for every $f \in C(X)$,
\begin{equation}
\label{unique erg}
\lim_{N \rightarrow \infty} \frac{1}{N} \sum_{n=1}^N f(T^n x) = \int_X f \, d \nu
\end{equation}
uniformly on $X$.
\end{Theorem}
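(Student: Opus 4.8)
The plan is to prove the two implications separately. The reverse direction is the routine one: assume the averages $\frac{1}{N}\sum_{n=1}^N f(T^n x)$ converge uniformly to $\int_X f \, d\nu$ for every $f \in C(X)$, and let $\nu'$ be any $T$-invariant Borel probability measure. Integrating the (uniformly bounded) averages against $\nu'$ and using $T$-invariance of $\nu'$, one gets $\int_X \frac{1}{N}\sum_{n=1}^N f(T^n x)\, d\nu'(x) = \int_X f \, d\nu'$ for every $N$, while uniform convergence to a constant lets the left-hand side tend to $\int_X f \, d\nu$. Hence $\int_X f \, d\nu' = \int_X f \, d\nu$ for all $f \in C(X)$, and the Riesz representation theorem forces $\nu' = \nu$, establishing unique ergodicity.

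For the forward direction I would argue by contradiction. Suppose $\nu$ is the unique invariant measure but uniform convergence fails for some $f \in C(X)$. Then there exist $\epsilon > 0$, integers $N_k \to \infty$, and points $x_k \in X$ with $\left| \frac{1}{N_k}\sum_{n=1}^{N_k} f(T^n x_k) - \int_X f \, d\nu \right| \geq \epsilon$. Form the empirical measures $\nu_k = \frac{1}{N_k}\sum_{n=1}^{N_k} \delta_{T^n x_k}$ on the compact space $X$. By weak-$*$ compactness of the space of Borel probability measures on a compact metric space, after passing to a subsequence we may assume $\nu_k \to \nu'$ in the weak-$*$ topology.

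The key verification is that the limit $\nu'$ is $T$-invariant: for any $g \in C(X)$ one has $\int_X g \, d\nu_k - \int_X g \circ T \, d\nu_k = \frac{1}{N_k}\bigl( g(T x_k) - g(T^{N_k+1} x_k) \bigr)$, whose modulus is at most $\frac{2\|g\|_\infty}{N_k} \to 0$; passing to the limit gives $\int_X g \, d\nu' = \int_X g \circ T \, d\nu'$, so $\nu'$ is invariant. Unique ergodicity then yields $\nu' = \nu$, whence $\int_X f \, d\nu_k \to \int_X f \, d\nu$. But $\int_X f \, d\nu_k = \frac{1}{N_k}\sum_{n=1}^{N_k} f(T^n x_k)$ stays at distance $\geq \epsilon$ from $\int_X f \, d\nu$, a contradiction.

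The main obstacle is precisely this forward implication, where the subtlety is upgrading pointwise behavior to genuine uniformity over all base points. The device that handles it is the compactness argument above: it converts a hypothetical failure of uniform convergence — a statement about varying points $x_k$ — into a single weak-$*$ limit measure, and the invariance of that limit is what collides with uniqueness. One should take care that the extracted $\nu'$ really is a probability measure (each $\nu_k$ has total mass one, and since $1 \in C(X)$ this mass is preserved in the limit) and that $f$ is tested against the correct subsequence of empirical measures.
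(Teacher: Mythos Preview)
Your argument is correct and is the standard textbook proof of this classical equivalence. Note, however, that the paper does not supply its own proof of this statement: it is quoted as a known result with references to Oxtoby and Furstenberg, so there is no in-paper proof to compare against. Your write-up matches the argument one finds in those sources.
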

  A point $x \in X$ is called {\it{$\nu$-generic}}, if $x$ satisfies (\ref{unique erg}) for any $f \in C(X)$. If $\nu$ is ergodic measure on the topological dynamical system $(X,T)$, then $\nu$-almost all points are generic with respect to $\nu$.   
  
If $f$ is the characteristic function of a cylinder set for the word $u$ in a substitution space $X_{\zeta}$, then $\sum_{n=1}^N f(T^n x)$ counts the number of occurence of $u$ at the position $i$ in $x$ for $1 \leq i \leq N$. Thus the probability measure of the cylinder set $[u]$ is given by the occurrence frequency of $u$ in $x$ for $x \in X_{\zeta}$.   
 
 Let us introduce now some additional notation, which will be used throughout the Section 3.1. Define $n$-blocks $A_n = \theta^n (00)$ and $B_n = \theta^n(1)$ for $n \in \mathbb{N}$. Note that for any $n$, the finite word obtained by deleting first two letters in $A_n$ is the same as the word obtained by deleting the first letter in $B_n$. Denote this word as $C_n$, so we have $A_n = 00 C_n$ and $B_n = 1 C_n.$
Also notice that $A_{n+1} = A_n B_n A_n B_n$, $B_{n+1} = B_n B_n A_n B_n$, and the block $C_n$ begins with $C_{n-k}$ for $1 \leq k < n$. Denote the length of the word or block $w$ by $l=l(w)$. Let $l_n$  be the length of the word $C_n$, so $l_n = l(C_n)$. 

The following lemma says that for each $n \in \mathbb{N}$, $x \in X_{\theta}$ can be uniquely written in terms of $\theta^{n}(00)$ and $\theta^{n}(1)$.
\begin{Lemma}
\label{rep}
There is $m \in \mathbb{N}$ such that any admissible word $W$ of $X_{\theta}$ with $l(W) \geq m$ has the following unique expression
\begin{equation}
\label{rep eqn}
W = K_1 v_1 C_1 v_2 C_1 \cdots v_k C_1 K_2,
\end{equation}
where $K_1$ is a suffix of $\theta(00)$ or $\theta(1)$, $K_2$ is a prefix of $\theta(00)$ or $\theta(1)$ and $v_i = 00$ or $1$.
\end{Lemma}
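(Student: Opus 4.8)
The plan is to obtain the representation (\ref{rep eqn}) from the decomposition of $x \in X_{\theta}$ into the level-one blocks $A_1 = \theta(00) = 001001$ and $B_1 = \theta(1) = 11001$. Since $A_1 = (00)C_1$ and $B_1 = (1)C_1$, a concatenation of copies of $A_1$ and $B_1$ is precisely a word $(v_1C_1)(v_2C_1)\cdots(v_kC_1)$ with $v_i \in \{00,1\}$, and allowing the two extreme blocks to be truncated gives exactly the shape $K_1 v_1 C_1 \cdots v_k C_1 K_2$ with $K_1$ a suffix and $K_2$ a prefix of $\theta(00)$ or of $\theta(1)$. So it is enough to show that a sufficiently long admissible word admits a unique decomposition of this kind, the whole point being that it must depend on $W$ as a string and not on an ambient point of $X_{\theta}$.

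First I would record a few facts about admissible words, each proved by a short induction using $\theta^{n+1}(a) = \theta(\theta^{n}(a))$ together with the elementary observations that $\theta(0) = 001$ and $\theta(1) = 11001$ both end in $1$, neither ends with $11$, and neither contains $000$: every maximal run of $0$'s in an admissible word has length exactly $2$, and every maximal run of $1$'s has length at most $3$. The crucial ingredient is the following \emph{marker lemma}: the word $1100$ is admissible and, in any admissible word, it occurs only as the first four letters of a $\theta(1)$-block. Indeed $1100$ contains $11$; if this $11$ straddled a junction $\cdots\theta(c)\mid\theta(1)\cdots$, being the last letter of $\theta(c)$ (which is $1$) followed by the first letter of $\theta(1)$, then the next letter would be the second letter of $\theta(1) = 11001$, again $1$, producing $111$ rather than $1100$. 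Hence the $11$ lies inside a single block, and since $\theta(0)$ contains no $11$ while $\theta(1) = 11001$ has $11$ only at its front, the occurrence of $1100$ is the prefix of a $\theta(1)$-block.

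Next I would fix the block structure. Writing $\theta^{n}(a) = \theta(b_1)\theta(b_2)\cdots$ with $b_1 b_2 \cdots = \theta^{n-1}(a)$, the block word $(b_i)$ is itself admissible, so its maximal runs of $0$'s have length exactly $2$; hence consecutive $\theta(0)$-blocks occur only in the pairs $\theta(0)\theta(0) = A_1$, never three in a row, and every admissible word is a factor of a concatenation of $A_1$ and $B_1$ in which no two $A_1$'s are adjacent. Consequently successive $\theta(1)$-blocks — equivalently, by the marker lemma, successive occurrences of $1100$ — lie $l(B_1) = 5$ apart (two adjacent $B_1$'s) or $l(B_1)+l(A_1) = 11$ apart (one intervening $A_1$), so there is a fixed $m$ such that every admissible $W$ with $l(W) \geq m$ contains an occurrence of $1100$ and at least one full block. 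Existence of a decomposition of the required form is then immediate: decompose a word $\theta^{n}(a)$ containing $W$ into $\theta(0)$- and $\theta(1)$-blocks and pair the $\theta(0)$'s into $A_1$'s.

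It remains to prove uniqueness. In any representation of $W$ of the form (\ref{rep eqn}), every full $B_1$-block starts with $1100$, and by the marker lemma — applied also to the junctions involving the truncated end blocks $K_1, K_2$ — these are exactly the occurrences of $1100$ inside $W$; thus the positions of all $B_1$-blocks are determined by $W$ alone. The gaps of $W$ between consecutive $B_1$-blocks, and the two outermost leftover pieces, must then be filled with $A_1$-blocks, and since $l(A_1) = 6$ the number of $A_1$'s filling each gap is forced by its length; so the decomposition is unique, the outermost pieces being a suffix $K_1$ of $\theta(00)$ or $\theta(1)$ and a prefix $K_2$ of $\theta(00)$ or $\theta(1)$. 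I expect the real work to lie in the marker lemma and in the careful analysis of the junctions involving $K_1$ and $K_2$, since this is exactly what rules out ``phantom'' occurrences of $1100$ and thereby makes the decomposition intrinsic to the word $W$.
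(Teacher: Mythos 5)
Your proof is correct and follows essentially the same route as the paper: both rest on the observation that the marker ($1100$ for you, $11001$ for the paper) contains a $11$ that cannot straddle a block junction and occurs inside a block only at the front of $\theta(1)$, which forces the block boundaries and hence uniqueness, the only cosmetic difference being that the paper invokes primitivity to obtain the constant $m$ where you compute explicit gap bounds. The endpoint bookkeeping you flag for the truncated pieces $K_1, K_2$ is precisely the content glossed over in the paper's one-line uniqueness argument, so nothing essential is missing.
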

\begin{proof}
It is obvious that any admissible word $W$ can be represented by formula (\ref{rep eqn}). Let us show that expression (\ref{rep eqn}) is unique.
 Since $\theta$ is primitive there exists $m \in \mathbb{N}$ such that any admissible word $W = w_1 w_2 \cdots w_l$ with $l \geq m$ contains $11001$, and so, there is $i$, $(1 \leq i \leq l-4)$ such that
$w_i w_{i+1} \cdots w_{i+4} = 11001.$
 If (\ref{rep eqn}) is not unique, there is $j$ with $1 \leq j \leq 3$ such that $w_{i+j} \cdots w_{i+4}$ is a prefix of $\theta(00)$ or $\theta(1)$. This is possible only for $j=2$ and in this case the first two letters $11$ of $11001$ is a suffix of $\theta(00)$ or $\theta(1)$, which is a contradiction.
\end{proof}

In \cite{JRS}, the Structure Lemma (Lemma 1 in \cite{JRS}), obtained by the cutting and stacking method, plays a crucial role in showing that the Chacon system has minimal self-joinings. In this paper we obtain the following similar result  based on Lemma \ref{rep}.  
\begin{Lemma}
\label{orbit}
Let $m$ be a positive integer as in Lemma \ref{rep}.
If $x$ and $y$ $\in X_{\theta}$ are not in the same orbit, then for infinitely many $n$, there exist $m_{1,n}$ and $m_{2,n}$ $\in \mathbb{Z}$ such that 
\begin{enumerate}[(i)]
\item $|m_{i,n}| \leq (m+3)( l_n + 2 )$ for $i=1,2$ and $|m_{1,n} - m_{2,n}| \leq \frac{1}{2} (l_n + 3)$,
\item either $C_n 00 C_n $ occurs at $m_{1,n}$ of $x$ and $C_n 1 C_n$ occurs at $m_{2,n}$ of $y$, 
or $C_n 1 C_n $ occurs at $m_{1,n}$ of $x$ and $C_n 00 C_n$ occurs at $m_{2,n}$ of $y$.
\end{enumerate}
\end{Lemma}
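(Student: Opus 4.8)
The plan is to exploit the unique decomposition from Lemma \ref{rep} together with unique ergodicity to find, in both $x$ and $y$, occurrences of long words of the form $\theta^n(00) = A_n$ and $\theta^n(1) = B_n$ (equivalently $C_n 00 C_n$ and $C_n 1 C_n$, using $A_n = 00C_n$, $B_n = 1C_n$), located at controlled positions, and then argue that if for all large $n$ both $x$ and $y$ saw the \emph{same} type of word at \emph{matching} positions then $x$ and $y$ would lie in the same orbit. First I would fix $n$ large and apply Lemma \ref{rep} at level $n$: by primitivity, the word $C_n 00 C_n$ (that is, $A_n$ with an extra $C_n$ appended, which occurs inside $\theta^{n+2}(a)$ for a suitable letter $a$ because $A_{n+1} = A_n B_n A_n B_n$ and $B_{n+1}$ both contain $A_n$ followed by $B_n = 1C_n$, so $A_n$ is followed by $1C_n$ and hence $00C_n\cdots$ appears, etc.) is $\theta$-admissible, and likewise $C_n 1 C_n$; moreover by minimality every admissible word of bounded length occurs in every point of $X_\theta$ with bounded gaps. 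Quantitatively: there is a constant $R$ (coming from primitivity, independent of $n$ after scaling by $l_n$) so that within any window of length $(m+2)(l_n+2)$ in $x$ one finds a full $\theta^n$-block, i.e. a copy of $A_n$ or $B_n$; the same for $y$. This gives positions $p$ in $x$ and $q$ in $y$ with $|p|,|q| \le (m+2)(l_n+2)$ at which an $n$-block begins.

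Next I would compare the \emph{types} (whether the $n$-block is $A_n$, the image of $00$, or $B_n$, the image of $1$) read off from $x$ and from $y$ along their respective $\theta^n$-decompositions. The key dichotomy: if for infinitely many $n$ the two decompositions are "out of phase" or "of different type" near the origin, we can align one occurrence of $C_n 00 C_n$ in one sequence against an occurrence of $C_n 1 C_n$ in the other, shifting each by at most $O(l_n)$; since $|A_n| = |B_n| + 1$ differ by exactly one, and the structure $A_{n+1} = A_nB_nA_nB_n$, $B_{n+1} = B_nB_nA_nB_n$ forces the $n$-block boundaries of the two points to drift relative to each other, the positions $m_{1,n}$ (in $x$) and $m_{2,n}$ (in $y$) can be taken within $\tfrac12(l_n+3)$ of each other and within $(m+3)(l_n+2)$ of the origin — this is where the bookkeeping with $K_1,K_2$ being suffixes/prefixes of $\theta(00)$ or $\theta(1)$ and with the $v_i \in \{00,1\}$ enters, to convert "an $n$-block starts near $0$" into "the specific word $C_n\,v\,C_n$ with $v=00$ or $v=1$ starts near $0$" and to control the relative offset. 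Conversely, if for all sufficiently large $n$ the $\theta^n$-decompositions of $x$ and $y$ agreed in type at every position up to a fixed shift, then passing to the limit (using that the $C_n$ nest: $C_n$ begins with $C_{n-k}$) the two points would agree after a single fixed translation, i.e. lie on the same orbit — contradicting the hypothesis. Hence the alignment of opposite types must occur infinitely often, which is exactly conclusion (ii), with (i) recording the position bounds.

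The main obstacle I expect is the precise offset estimate $|m_{1,n}-m_{2,n}| \le \tfrac12(l_n+3)$ and the "not same orbit $\Rightarrow$ mismatch infinitely often" step. For the latter one must rule out that $x$ and $y$ have $\theta^n$-decompositions that are perfectly synchronized in type for all large $n$ but shift by an amount growing with $n$; here the constraint that any fixed finite shift is the only possibility follows because the level-$n$ block lengths $|A_n|, |B_n|$ grow exponentially while a genuine orbit relation is a single integer shift, so a "synchronized" pair is forced into a bounded (hence eventually constant) relative shift and then into equality after that shift — I would make this rigorous by a compactness/pigeonhole argument on the relative position of the two decompositions modulo the level-$n$ block structure. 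The offset bound itself should come from noting that once an $n$-block boundary is located within $(m+2)(l_n+2)$ of the origin in each point, one of the (at most two) neighboring occurrences of $C_n 00 C_n$ versus $C_n 1 C_n$ dictated by $A_{n+1},B_{n+1}$ lies within half a block-length of the corresponding one in the other point; the constant $\tfrac12$ is obtained by choosing, among the several admissible $(n+1)$-level patterns $A_nB_nA_nB_n$ and $B_nB_nA_nB_n$, the occurrence minimizing the discrepancy.
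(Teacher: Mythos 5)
Your overall strategy---align the $\theta^n$-block decompositions of $x$ and $y$ by a shift of at most half a block length, locate a position where one decomposition reads $A_n$ and the other reads $B_n$, and use the fact that every $n$-block ends in $C_n$ while $A_n=00C_n$ and $B_n=1C_n$ to manufacture the words $C_n00C_n$ and $C_n1C_n$---is the right one. But there is a genuine gap at the central step: nothing in your argument guarantees that a type mismatch occurs \emph{within distance $(m+3)(l_n+2)$ of the origin}, which is what conclusion (i) demands. Your dichotomy is not exhaustive. The negation of ``for infinitely many $n$ there is a mismatch near the origin'' is ``for all large $n$ there is no mismatch inside a window of size $O(m\,l_n)$ around the origin,'' and this does \emph{not} say the decompositions agree at every position; so the converse horn of your argument (agreement everywhere up to a fixed shift $\Rightarrow$ same orbit) never gets off the ground. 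The compactness/pigeonhole step you sketch addresses stabilization of the relative shift, which is a different and much easier issue than localizing the mismatch. (Also, the appeal to unique ergodicity and bounded gaps is unnecessary: by Lemma \ref{rep} every position of $x$ already lies in an $n$-block of the canonical decomposition, so an $n$-block boundary automatically occurs within $l_n+2$ of the origin.)

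The missing idea is a descent, or ``zoom-out,'' on the index of the nearest mismatch. Fix $n_0$, shift $y$ by an integer $k$ with $|k|\le\frac12(l_{n_0}+2)$ so that the $n_0$-blocks of $x$ and $T^ky$ containing position $0$ begin at the same place, and let $s_n$ denote the block-index of the mismatched pair of $n$-blocks nearest to the origin. Such an index exists for every $n$, because full agreement of the decompositions would force $x=T^ky$, contradicting the hypothesis. The key observation is that a mismatch of $n$-blocks at block-index $s_n$ forces a mismatch of $(n+1)$-blocks at some index $s_{n+1}$ with $|s_{n+1}|<|s_n|$, since each $(n+1)$-block is a concatenation of four $n$-blocks ($A_{n+1}=A_nB_nA_nB_n$, $B_{n+1}=B_nB_nA_nB_n$). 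Iterating from $n_0$ upward one reaches a level $n\ge n_0$ with $|s_n|\le m$, at which point the mismatched blocks sit within roughly $(m+1)(l_n+2)$ of the origin; the bounds in (i), including the offset bound $|m_{1,n}-m_{2,n}|\le\frac12(l_n+3)$ coming from the alignment shift $k$ plus a possible $\pm1$ correction when $s_n<0$, then follow by direct bookkeeping, and (ii) follows because the block preceding the mismatch supplies the leading $C_n$. Without this descent your construction only produces mismatches at uncontrolled positions, and the quantitative conclusions of the lemma cannot be recovered.
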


\begin{proof}
For a given $n_0$, we will find an integer $n \geq n_0$ satisfying the condition stipulated in the formulation. 

Let us express $x$ and $y$ in terms of the $n_0$-blocks $A_{n_0} = \theta^{n_0}(00)$ and $B_{n_0}=\theta^{n_0}(1)$. Then we can find an integer $k$ ($|k| \leq \frac{1}{2} (l_{n_0}+2)$) such that the block containing $x_0$ begins at the same place of the  block containing $(T^k y)_0$. Introduce the sequence of $n_0$-blocks $(D_i^{n_0})_{i \in \mathbb{Z}}$ such that $x= \cdots D_{-1}^{n_0} D_0^{n_0} D_1^{n_0} \cdots$, where $D_i^{n_0}$ is $A_{n_0}$ or $B_{n_0}$ and $x_0$ belongs to $D_0^{n_0}$. Let $\delta_i$ be the position where $D_i^{n_0}$ occurs in $x$. Similarly, $T^ky$ can be written in terms of $n_0$-blocks $(E_i^{n_0})_{i \in \mathbb{Z}}$ and $\epsilon_i$ denotes the position where $E_i^{n_0}$ occurs in $T^k y$. Note that $\delta_0 = \epsilon_0$. Since $x$ and $y$ are not in the same orbit, there exists $s$ such that $D_s^{n_0}$ and $E_s^{n_0}$ are different. We choose $s_{n_0}$ such that $|s_{n_0}|$ is minimal for such $s$. Without loss of generality we can assume that $D_{s_{n_0}}^{n_0} = B_{n_0} $ and $E_{s_{n_0}}^{n_0}=A_{n_0}$.

Suppose $|s_{n_0}| > m$. We can express $x$ and $T^k y$ in terms of the $(n_0 + 1)$-blocks. Note that 
two different $(n_0+1)$-blocks occur at the positions in $x$ and $T^ky$ where two different $n_0$-blocks $D_{s_{n_0}}^{n_0}$ and $E_{s_{n_0}}^{n_0}$ occur. Since  the length of $n$-blocks is increasing, $|s_{n_0 +1}| < |s_{n_0}|$.
 We can find the minimal $n \geq n_0$ such that $|s_n| \leq m$. Now we consider the two cases $s_{n} \geq 0$ and $s_{n} < 0$. If $s_{n} \geq 0$, then $C_{n}1 C_{n}$ occurs at position $\delta_{s_{n}} - l_{n}$ in $x$ and $C_{n} 00 C_{n}$ occurs at position $\delta_{s_{n}} - l_{n}$ in $T^ky$ (see figure 1.) Let $m_{1, n} = \delta_{s_{n}} - l_{n}$  and $m_{2, n} = \delta_{s_{n}} - l_{n} -k$. Then 
\begin{enumerate}[(a)]
\item $|m_{1,n}| \leq |\delta_{s_{n}}| + |l_{n}| \leq (m+1)(l_{n}+2) + l_{n} \leq (m+2)(l_{n} +2)$. 
\item $|m_{2,n}| \leq |\delta_{s_{n}}| + |l_{n}| + |k| \leq (m+1)(l_{n}+2) + l_{n} + \frac{1}{2}(l_{n}+2) \leq (m+3)(l_{n} +2)$.
\item $|m_{1,n} - m_{2,n}| \leq |k| \leq  \frac{1}{2}(l_{n}+2)$.
\end{enumerate}
If $s_{n} < 0$, then $C_{n}1 C_{n}$ occurs at $\delta_{s_{n}} - l_{n}$ in $x$ and $C_{n} 00 C_{n}$ occurs at $\delta_{s_{n}} - l_{n} - 1$ in $T^ky$. Let $m_{1, n} = \delta_{s_{n}} - l_{n}$  and $m_{2, n} = \delta_{s_{n}} - l_{n} -k -1$. Then by the similar consideration we have $|m_{i,n}| \leq (m+3)(l_{n}+2)$ and $|m_{1,n} - m_{2,n}| \leq |k| +1 \leq  \frac{1}{2}(l_{n}+3)$.
 \end{proof}

\setlength{\unitlength}{.4in}
\begin{picture}(5,1)(0,0)
\label{fig 1}
\linethickness{1pt}
\put(0,0){\line(1,0){15}}
\put(-.5, 0){\makebox(0,0){$x:$}}
\put(2,-.3){\makebox(0,0){$x_0$}}
\put(5.1,-.1){\line(0,1){0.2}}
\put(7,-.3){\makebox(0,0){$C_n$}}
\put(8.9,-.1){\line(0,1){0.6}}
\put(9.1,-.1){\line(0,1){0.2}}
\put(9,- .3){\makebox(0,0){$1$}}
\put(11,-.3){\makebox(0,0){$C_n$}}
\put(8.9,0.3){\line(1,0){4.1}}
\put(11, 0.7){\makebox(0,0){$D_{s_n}^n$}}
\put(13,-.1){\line(0,1){0.6}}
\put(0,-2){\line(1,0){15}}
\put(-.7, -2){\makebox(0,0){$T^ky:$}}
\put(5.1,-2.1){\line(0,1){0.2}}
\put(7,-2.3){\makebox(0,0){$C_n$}}
\put(8.9,-2.1){\line(0,1){0.6}}
\put(9.1,-2.1){\line(0,1){0.2}}
\put(9.3,-2.1){\line(0,1){0.2}}
\put(9, -2.3){\makebox(0,0){$0$}}
\put(9.2, -2.3){\makebox(0,0){$0$}}
\put(11.2,-2.3){\makebox(0,0){$C_n$}}
\put(13.2,-2.1){\line(0,1){0.6}} 
\put(8.9,-1.7){\line(1,0){4.3}}
\put(11, -1.4){\makebox(0,0){$E_{s_n}^n$}}

\end{picture}

\vspace{2cm}

\setlength{\unitlength}{.4in}
\begin{picture}(5,3)(0,0)
\linethickness{1pt}
\put(0,0){\line(1,0){15}}
\put(-.5, 0){\makebox(0,0){$x:$}}
\put(2,-.3){\makebox(0,0){$x_0$}}
\put(5.1,-0.1){\line(0,1){0.2}}
\put(5.1, -.3){\makebox(0,0){$m_{1,n}$}}
\put(7, -0.3){\makebox(0,0){$C_n$}}
\put(8.9,-.1){\line(0,1){0.2}}
\put(9.1,-.1){\line(0,1){0.2}}
\put(9,- .3){\makebox(0,0){$1$}}
\put(11,-.3){\makebox(0,0){$C_n$}}
\put(13,-.1){\line(0,1){0.2}}
 \put(0,-2){\line(1,0){15}}
\put(-.5, -2){\makebox(0,0){$y:$}}
\put(2,-2.5){\makebox(0,0){$y_0$}}
\put(4.1,-2.3){\makebox(0,0){$m_{2,n}$}}
\put(4.1,-2.1){\line(0,1){0.2}}
\put(6,-2.3){\makebox(0,0){$C_n$}}
\put(7.9,-2.1){\line(0,1){0.2}}
\put(8.1,-2.1){\line(0,1){0.2}}
\put(8.3,-2.1){\line(0,1){0.2}}
\put(8, -2.3){\makebox(0,0){$0$}}
\put(8.2, -2.3){\makebox(0,0){$0$}}
\put(10.2,-2.3){\makebox(0,0){$C_n$}}
\put(12.2,-2.1){\line(0,1){0.2}} 

\put(7, -3.5){\makebox(0,0){Figure 1}}
\end{picture}

\vspace{4cm}

To show that $(X_{\theta}, \mathcal{X}, \mu, T)$ has minimal self-joinings, we will use the following two technical lemmas.    
\begin{Lemma}[\cite{Ru} Lemma 6.14]
\label{disjoint lemma}
Let $I$ be the identity map on $(X, \mathcal{X}, \nu_1)$ and $S$ be an ergodic map on $(Y, \mathcal{Y}, \nu_2 )$.
If $\overline{\nu}$ is a joining of $(X, \mathcal{X}, \nu_1, I)$ and $(Y, \mathcal{Y}, \nu_2, S)$, then $\overline{\nu} = \nu_1 \times \nu_2$.
\end{Lemma}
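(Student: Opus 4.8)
The plan is to exploit the ergodicity of $S$ together with the two marginal conditions; no disintegration is needed (though a disintegration of $\overline{\nu}$ over $X$ into $S$-invariant fiber measures, followed by uniqueness of the ergodic decomposition, would give an alternative route). First I would observe that, since $I$ is the identity on $X$, the diagonal action $I \times S$ on $X \times Y$ is simply $(x,y) \mapsto (x, Sy)$; hence $\overline{\nu}$ is invariant under every map $(x,y) \mapsto (x, S^n y)$, $n \in \mathbb{Z}$. Consequently, for bounded measurable $f$ on $X$ and $g$ on $Y$,
\begin{equation*}
\int f(x)\,g(y)\, d\overline{\nu} = \int f(x)\,g(S^n y)\, d\overline{\nu} = \int f(x)\Bigl( \tfrac{1}{N}\sum_{n=0}^{N-1} g(S^n y) \Bigr)\, d\overline{\nu}
\end{equation*}
for every $N \in \mathbb{N}$.

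Next I would let $N \to \infty$. By the mean ergodic theorem applied to $S$ on $(Y,\mathcal{Y},\nu_2)$, the averages $\tfrac{1}{N}\sum_{n<N} g\circ S^n$ converge in $L^2(\nu_2)$ to the conditional expectation of $g$ given the $S$-invariant $\sigma$-algebra, and since $\nu_2$ is ergodic this limit is the constant $\int g\, d\nu_2$. The function $(x,y)\mapsto \tfrac{1}{N}\sum_{n<N} g(S^n y)$ depends on $y$ alone, and the $Y$-marginal of $\overline{\nu}$ is $\nu_2$, so the same convergence holds in $L^2(\overline{\nu})$, hence in $L^1(\overline{\nu})$; as $\|f\|_\infty < \infty$ we may pass to the limit inside the integral. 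Using also that the $X$-marginal of $\overline{\nu}$ is $\nu_1$, we obtain
\begin{equation*}
\int f(x)\,g(y)\, d\overline{\nu} = \int f\, d\nu_1 \cdot \int g\, d\nu_2 .
\end{equation*}

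Finally, since bounded product functions $f \otimes g$ (equivalently, rectangles $A \times B$) determine a finite Borel measure on $X \times Y$, this identity forces $\overline{\nu} = \nu_1 \times \nu_2$. There is no serious obstacle here; the only step requiring a little care is the interchange of limit and integral in the middle paragraph, and that is precisely where the marginal identity $(\mathrm{pr}_Y)_*\overline{\nu} = \nu_2$ is used — it is what lets the $L^2$-convergence of the ergodic averages transfer from $(Y,\nu_2)$ to $(X\times Y,\overline{\nu})$.
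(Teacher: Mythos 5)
Your proof is correct, and it is the standard argument for this fact (invariance under $I\times S$, Ces\`aro averaging, the mean ergodic theorem plus ergodicity of $S$, and the two marginal conditions). The paper itself gives no proof here --- it simply cites Lemma 6.14 of Rudolph's book --- so there is nothing to compare against beyond noting that your argument is the expected one and is complete.
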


\begin{Lemma}[\cite{Ru} Lemma 6.15]
\label{msj lemma}
 Let $\bold{X} = (X, \mathcal{X}, \mu, T)$ be an ergodic system and $\{P_i \}$ be a countable set of cylinder sets generating $\mathcal{X}$. Let $\overline{\mathcal{A}} = \{P_l \times P_m\}$ be a countable generating algebra of $X \times X$.
 Assume that
 \begin{enumerate}
\item $\overline{\mu} \in J(\bold{X})$ is a two-fold ergodic joining.
\item $(x,y) \in X \times X$ satisfies 
    $$\lim_{N \rightarrow \infty} \frac{1}{N} \sum_{n=0}^{N-1} 1_A(T^{-i}x, T^{-i} y) = \lim_{N \rightarrow \infty} \frac{1}{N} \sum_{n=0}^{N-1} 1_A(T^{i}x, T^{i} y) = \overline{\mu}(A)$$
for all $A \in \overline{\mathcal{A}}$.
\item There are intervals $L_k = [i_k, j_k] \subset \mathbb{Z}$,  $M_k = [a_k, b_k]$, $t_k \in \mathbb{Z}$ and $\gamma > 0$ such that
    $i_k \leq 0 \leq j_k$ and $j_k - i_k \rightarrow \infty$, $M_k \subset L_k$ and $t_k + M_k \subset L_k$, and $|M_k| \geq \gamma |L_k| $.
\item For any cylinder sets $P_l, P_m$, there exists $K=K(P_l, P_m)$ such that if $k \geq K$, then for all $i \in M_k$,
 \subitem$T^i x \in P_l$ if and only if $T^{t_k + i}x \in P_l$, 
 \subitem$T^{i} y \in P_m$ if and only if $T^{t_k + i +1}y \in P_m $.
 \end{enumerate}  
Then $\overline{\mu}$ is an $I \times T$ invariant measure on $X \times X$and hence $\overline{\mu} = \mu \times \mu$.
\end{Lemma}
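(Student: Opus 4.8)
The plan is to show that the joining $\overline{\mu}$ is invariant under the transformation $I\times T$, and then to appeal to Lemma \ref{disjoint lemma}: once $\overline{\mu}$ is $I\times T$-invariant it is a joining of the identity system $(X,\mathcal{X},\mu,I)$ with the ergodic system $(X,\mathcal{X},\mu,T)$, and hence must coincide with $\mu\times\mu$. Since the rectangles $P_l\times P_m$ generate the algebra $\overline{\mathcal{A}}$ and therefore $\mathcal{X}\otimes\mathcal{X}$, and since $(I\times T)^{-1}(P_l\times P_m)=P_l\times T^{-1}P_m$ again lies in $\overline{\mathcal{A}}$ (as $T^{-1}P_m$ is again a cylinder set), it suffices to establish the equalities
$$\overline{\mu}(P_l\times T^{-1}P_m)=\overline{\mu}(P_l\times P_m)\qquad\text{for all }l,m.$$

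The first ingredient I would isolate is an \emph{interval-averaging fact}: for any sequence of finite integer intervals $N_k$ with $|N_k|\to\infty$ and $\max_{n\in N_k}|n|\le C|N_k|$ for a fixed constant $C$, one has $\frac{1}{|N_k|}\sum_{n\in N_k}1_A(T^n x,T^n y)\to\overline{\mu}(A)$ for every $A\in\overline{\mathcal{A}}$. This follows from hypothesis $(2)$: writing $g(n)=1_A(T^n x,T^n y)$ and splitting $N_k$ into its non-negative and its negative parts, the non-negative part is governed by the forward Birkhoff average and the negative part by the backward one, and a telescoping estimate gives $\sum_{n\in N_k}g(n)=|N_k|\,\overline{\mu}(A)+o(|N_k|)$, the error being $o(|N_k|)$ precisely because the endpoints of $N_k$ are $O(|N_k|)$. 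I would then apply this with $N_k=M_k$ and with $N_k=t_k+M_k$: by hypothesis $(3)$ both are subintervals of $L_k$ of length at least $\gamma|L_k|$, and $0\in L_k$ with $|L_k|\to\infty$, so $\max_{n\in N_k}|n|\le|L_k|\le\tfrac1\gamma|N_k|$ and the hypotheses hold with $C$ of order $1/\gamma$; consequently $\frac{1}{|M_k|}\sum_{i\in M_k}1_A(T^i x,T^i y)$ and $\frac{1}{|t_k+M_k|}\sum_{j\in t_k+M_k}1_A(T^j x,T^j y)$ both converge to $\overline{\mu}(A)$.

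Next I would use hypothesis $(4)$. Fix $l,m$ and take $k\ge K(P_l,P_m)$. For each $i\in M_k$ one has $1_{P_l}(T^i x)=1_{P_l}(T^{t_k+i}x)$ and $1_{P_m}(T^i y)=1_{P_m}(T^{t_k+i+1}y)=1_{T^{-1}P_m}(T^{t_k+i}y)$, whence
$$1_{P_l\times P_m}(T^i x,T^i y)=1_{P_l\times T^{-1}P_m}(T^{t_k+i}x,T^{t_k+i}y).$$
Summing over $i\in M_k$, re-indexing $j=t_k+i$ so that $j$ runs over $t_k+M_k$, and dividing by $|M_k|=|t_k+M_k|$ gives
$$\frac{1}{|M_k|}\sum_{i\in M_k}1_{P_l\times P_m}(T^i x,T^i y)=\frac{1}{|t_k+M_k|}\sum_{j\in t_k+M_k}1_{P_l\times T^{-1}P_m}(T^j x,T^j y).$$
Letting $k\to\infty$ and applying the interval-averaging fact to each side (with $A=P_l\times P_m$ on the left and $A=P_l\times T^{-1}P_m$ on the right) yields $\overline{\mu}(P_l\times P_m)=\overline{\mu}(P_l\times T^{-1}P_m)$. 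This is the desired family of equalities, so $\overline{\mu}$ is $I\times T$-invariant, and Lemma \ref{disjoint lemma} completes the proof.

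I expect the genuinely technical step to be the interval-averaging fact: one has to treat separately the cases in which $M_k$ (and likewise $t_k+M_k$) lies entirely in $\{n\ge 0\}$, entirely in $\{n<0\}$, or meets both, and in each case combine the forward and backward genericity of $(x,y)$ while keeping the remainder terms of size $o(|M_k|)$. This is exactly where the structural constraints of hypothesis $(3)$ — that $M_k$ fills a fixed positive proportion of $L_k$ and that $L_k$ is anchored at $0$ and expands — are indispensable; without them an average over a short interval sitting far from the origin need not reflect the measure $\overline{\mu}$.
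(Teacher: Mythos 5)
The paper does not actually prove this lemma --- it is quoted from Rudolph's book (Lemma 6.15 of \cite{Ru}) --- so there is no in-paper argument to compare against; judged on its own, your proof is correct and is essentially the standard one. You reduce $I\times T$-invariance to the identities $\overline{\mu}(P_l\times T^{-1}P_m)=\overline{\mu}(P_l\times P_m)$ on a generating family, obtain them by matching the averages over $M_k$ and $t_k+M_k$ via hypothesis (4), and you correctly isolate the delicate step: an average of $1_A$ along a subinterval $N_k$ need not converge to $\overline{\mu}(A)$ in general, and it is exactly the bound $\max_{n\in N_k}|n|\le |L_k|\le\gamma^{-1}|N_k|$ (from $N_k\subset L_k\ni 0$ and $|N_k|\ge\gamma|L_k|$) that makes the telescoping of the one-sided Birkhoff sums from hypothesis (2) produce an error of size $o(|N_k|)$. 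Lemma \ref{disjoint lemma} then finishes, as intended. The one assertion you should justify rather than state is that $P_l\times T^{-1}P_m$ is a set to which hypothesis (2) applies: $T^{-1}P_m$ is indeed again a cylinder set, but it need not belong to the particular countable family $\{P_i\}$, hence $P_l\times T^{-1}P_m$ need not lie in $\overline{\mathcal{A}}$ as literally defined. This is repaired either by enlarging $\{P_i\}$ to include the translates $T^{-1}P_i$ (harmless, since a generic point for an ergodic joining can be chosen generic for any fixed countable family), or by writing the clopen set $T^{-1}P_m$ as a finite disjoint union of sets in the algebra generated by $\{P_i\}$ and invoking additivity of both sides; either fix costs a sentence, not an idea, so I would not call this a genuine gap.
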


\begin{proof}[Proof of Theorem \ref{msj theorem}]
Note that the substitution dynamical system $(X_{\theta}, \mathcal{X}, \mu, T)$ arising from $\theta$ is weakly mixing, hence $\mu$  is non-atomic. We know from \cite{DK} that it is not strongly mixing. Hence if we show $(X_{\theta}, \mathcal{X}, \mu, T)$ has two-fold minimal self-joinings, then it will have minimal self-joinings of all orders by Corollary \ref{cor R}  and will be mildly mixing by Theorem \ref{mild cor}. 

 Given an ergodic joining $\overline{\mu}$, we can find a $\overline{\mu}$-generic point $(x,y)$ satisfying condition (2) in Lemma \ref{msj lemma}. If $x$ and $y$ are in the same orbit, that is $T^k x = y$ for some $k \in \mathbb{Z}$, then $\overline{\mu}$ is an off-diagonal measure which is the image of $\mu$ under the map $z \rightarrow (z,T^kz)$.  
 
Otherwise, introduce a new alphabet $\mathcal{A}_s$ consisting of the letters $h_i^s$, where $C_s = h_1^s \cdots h_{l_s}^s$ for each $s$. (Recall that $C_s$ is a block such that $\theta^s(00)= 00C_s$.) Let $P_i^s$ be the cylinder for the alphabet $h_i^s$. Then $\{P_i^s : 1 \leq s \leq l_s, i \in \mathbb{N} \}$ is a countable set of cylinder sets generating $\mathcal{X}$.

By Lemma \ref{orbit}, without loss of generality we assume that there exists an increasing sequence of integers $(n_k)_{k=1}^{\infty}$ such that the two blocks $C_{n_k} 1 C_{n_k}$ and $C_{n_k} 00 C_{n_k}$ appear in $L_k=[-(m+4)(l_{n_k}+2), (m+4)(l_{n_k}+2)]$ of $x$ and $y$ respectively (see figure 2). Let $M_k$ be the interval in the first $C_{n_k}$ blocks where $C_{n_k}1 C_{n_k}$ and $C_{n_k} 00 C_{n_k}$ overlap. Its length satisfies $|M_k| \geq |C_{n_k}| - |m_{1,n_k} - m_{2, n_k}| \geq\frac{1}{2} l_{n_k} -3 $. Let $t_k=l_{n_k}+1$. Then for $s \geq n_k$, we have
$$T^i x \in P_l^s  \Leftrightarrow T^{t_k + i}x \in P_l^s \,\,\,\, \textrm{and} \,\,\,\, 
T^{i} y \in P_m^s \Leftrightarrow T^{t_k + i +1}y \in P_m^s$$
for any $1 \leq l,m \leq h_{l_s}$
Also, we have
 $$ \liminf_{k \rightarrow \infty} \frac{|M_k|}{2(m+4)(l_{n_k}+2) } \geq \liminf_{k \rightarrow \infty} \frac{l_{n_k} -6 }{4(m+4)(l_{n_k}+2)} =  \frac{1}{4(m+4)} > 0.$$  
Lemma \ref{msj lemma} implies that $\overline{\mu}$ is $I \times T$ invariant and thus, $\overline{\mu} = \mu \times \mu$.
\end{proof}
 
 \setlength{\unitlength}{.4in}
\begin{picture}(5,3)(0,-1.5)
\linethickness{1pt}
\put(0,0){\line(1,0){15}}
\put(-.5, 0){\makebox(0,0){$x:$}}
\put(2,.3){\makebox(0,0){$x_0$}}

\put(5.1,-0.1){\line(0,1){0.2}}
\put(5.1, .3){\makebox(0,0){$m_{1,n_k}$}}
\put(7, 0.3){\makebox(0,0){$C_{n_k}$}}
\put(8.9,-.1){\line(0,1){0.2}}
\put(9.1,-.1){\line(0,1){0.2}}
\put(9,.3){\makebox(0,0){$1$}}
\put(11, .3){\makebox(0,0){$C_{n_k}$}}
\put(13,-.1){\line(0,1){0.2}}
 \put(0,-2){\line(1,0){15}}
\put(-.5, -2){\makebox(0,0){$y:$}}
\put(2,-2.3){\makebox(0,0){$y_0$}}
\put(4.1,-2.3){\makebox(0,0){$m_{2,n_k}$}}
\put(4.1,-2.1){\line(0,1){0.2}}
\put(6,-2.3){\makebox(0,0){$C_{n_k}$}}
\put(7.9,-2.1){\line(0,1){0.2}}

\put(8.1,-2.1){\line(0,1){0.2}}
\put(8.3,-2.1){\line(0,1){0.2}}
\put(8, -2.3){\makebox(0,0){$0$}}
\put(8.2, -2.3){\makebox(0,0){$0$}}
\put(10.2,-2.3){\makebox(0,0){$C_{n_k}$}}
\put(12.2,-2.1){\line(0,1){0.2}} 


\put(5.1,-0.4){\line(1,0){2.8}}
\put(5.1,-0.4){\line(0,1){0.4}}
\put(7.9,-0.4){\line(0,1){0.4}}
\put(9.1,-0.4){\line(1,0){2.8}}
\put(9.1,-0.4){\line(0,1){0.4}}
\put(11.9,-0.4){\line(0,1){0.4}}

\put(5.1,-1.5){\line(1,0){2.8}}
\put(5.1,-2){\line(0,1){0.5}}
\put(7.9,-2){\line(0,1){0.5}}

\put(9.4,-1.5){\line(1,0){2.8}}
\put(9.4,-2){\line(0,1){0.5}}
\put(12.2,-2){\line(0,1){0.5}}
\put(6.2, -.3){\makebox{$M_k$}}
\put(9.6, -.3){\makebox{$t_k + M_k$}}
\put(6.2, -1.9){\makebox{$M_k$}}
\put(9.7, -1.9){\makebox{$t_k + 1 + M_k$}}

\put(7, -3.5){\makebox(0,0){Figure 2}}

\end{picture}

\vspace{4cm}



\subsection{Tilling Spaces}\mbox{}

Let us present some basic facts on the substitution tiling system arising from the substitution $\zeta$ on the alphabet $\{0,1\}$ and intervals $J_0$ and $J_1$. Let $\mathcal{T}$ be a substitution tiling arising from $\zeta$ and  intervals $J_0$ and $J_1$ and let $X_{\mathcal{T}}$ be the corresponding tiling space. For any $\zeta$-admissible word  $u= u_0 u_1 \cdots u_n$ and any interval $I \subset [0, |J_{u_0}|)$, define 
 \begin{equation*}
[u] \times I :=  \{ \mathcal{S} \in X_{\mathcal{T}} :  \textrm{for some} \, t \in I, \,  \,  \, (\mathcal{S} - t) \,\, \textrm{and} \,\,J_{u_0}J_{u_1} \cdots J_{u_n} \,\, \textrm{agree on} \,\,  [0, |J_{u_0}| + \cdots + |J_{u_n}| )  \}. 
\end{equation*}
These sets and their translates are called cylinder sets in the tiling space. They are clopen sets and span the topology of the tiling space.
Another description of the $\mathbb{R}$-flow on the substitution tiling space is as a flow under the function $f$ built over the substitution subshift $(X_{\zeta},T)$, where $f: X_{\zeta} \rightarrow \mathbb{R}$ is defined by $f(x) = |J_0|$ for $x \in [0]$ and $f(x) = |J_1|$ for $x \in [1]$.

It is known that the tiling system is minimal and has a unique invariant probability measure $\nu$ if the substitution is primitive. Then we have
\begin{equation}
\label{tiling measure}
 \nu ([u] \times I) = \mu([u]) \times \frac{ |I| }{\mu([0]) |J_0| + \mu([1]) |J_1| },
 \end{equation}
 where $\mu$ is the unique invariant probability measure on the substitution system arising from $\zeta$ (see Lemma 2.1 in \cite{CS}). 
Clark and Sadun obtained the following condition for this class of tiling systems to be weakly mixing.
\begin{Theorem}[cf \cite{CS} Theorem 2.5]
 \label{CS}
 Let $\zeta$ be an aperiodic, primitive, constant length substitution on the alphabet $\mathcal{A} = \{ 0,1\}$, where the number of $0$s occurring in $\zeta(0)$ is different from the number of $0$s occurring in $\zeta(1)$. Then the substitution tiling system arising from $\zeta$ and intervals $J_0$ and $J_1$, where $\frac{|J_0|}{|J_1|}$ is irrational, is weakly mixing.
 \end{Theorem}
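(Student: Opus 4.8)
The plan is to reduce the statement to a cohomological question over the base subshift and then exploit the self-similar structure together with Host's theorem on measurable eigenvalues. Recall (as noted above) that the tiling $\RR$-flow on $X_{\mathcal T}$ is the flow under the roof $f$ over $(X_\zeta,T)$, where $f=|J_0|\mathbf{1}_{[0]}+|J_1|\mathbf{1}_{[1]}$. By the definitions in the Introduction, weak mixing of the flow means there is no $\phi\in L^2(X_{\mathcal T})$ with $\phi\circ T_t=\erm^{2\pi\mathrm{i}s t}\phi$ for some $s\neq 0$. Standard suspension theory identifies such a $\phi$ with a measurable $\psi\colon X_\zeta\to\CC$, $|\psi|\equiv 1$, satisfying $\psi\circ T=\erm^{2\pi\mathrm{i}s f}\psi$; equivalently, the $\mathbb{T}$-valued cocycle $x\mapsto\erm^{2\pi\mathrm{i}s S_nf(x)}$ (with $S_nf=\sum_{0\le j<n}f\circ T^j$) is a measurable coboundary over $(X_\zeta,T,\mu)$. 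So it suffices to show: if $|J_0|/|J_1|\notin\QQ$ and $s\neq 0$, then $\erm^{2\pi\mathrm{i}s f}$ is not a measurable coboundary.

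Next I would renormalize using the substitution. After replacing $\zeta$ by a power (which changes neither the subshift nor the tiling space), we may assume $\zeta$ has a two-sided fixed point; write $q$ for its constant length. From $T^{q}\circ\zeta=\zeta\circ T$ and Mossé recognizability ($\zeta^k$ is a measurable isomorphism onto its image), any solution $\psi\circ T=\erm^{2\pi\mathrm{i}s f}\psi$ yields $\psi_k:=\psi\circ\zeta^k$ with $\psi_k\circ T=\erm^{2\pi\mathrm{i}s\,w_k(\,\cdot_0)}\psi_k$, where $w_k(a):=N_0(\zeta^k(a))\,|J_0|+N_1(\zeta^k(a))\,|J_1|$ is the total $f$-length of the level-$k$ block $\zeta^k(a)$. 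Since $\zeta$ has constant length, $a_k:=N_0(\zeta^k(a))$ is a $\ZZ$-valued sequence satisfying a second-order linear recurrence with eigenvalues $q$ and $\lambda:=N_0(\zeta(0))-N_0(\zeta(1))$, and a short computation (using that the substitution matrix has column sums $q$) gives
$$N_0(\zeta^k(0))-N_0(\zeta^k(1))=\lambda^{k},\qquad w_k(a)=a_k\,|J_0|+(q^{k}-a_k)\,|J_1|,$$
so that $w_k(0)-w_k(1)=\lambda^{k}(|J_0|-|J_1|)$. The hypothesis that $\zeta(0)$ and $\zeta(1)$ contain different numbers of $0$s is exactly $\lambda\neq 0$.

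The third step upgrades measurability to continuity: by Host's theorem on measurable eigenvalues of primitive aperiodic substitution systems (and its counterpart for the associated suspension flows, due to Solomyak and to Clark--Sadun), $\psi$ may be taken continuous. Evaluating the cocycle along the level-$k$ blocks of the fixed point, and using primitivity (for large $k$ both $\zeta^k(0)$ and $\zeta^k(1)$ occur in identical local contexts in $X_\zeta$), continuity of $\psi$ forces $\erm^{2\pi\mathrm{i}s\,w_k(a)}$ to converge, with a common limit for $a=0,1$. Hence $\erm^{2\pi\mathrm{i}s\lambda^{k}(|J_0|-|J_1|)}\to 1$ with $\lambda$ a nonzero integer, and a lacunary/Weyl argument (aided by the eventual periodicity of $a_k\bmod r$ for every $r$) forces $s(|J_0|-|J_1|)\in\QQ$; feeding this back into $w_k(a)=a_k|J_0|+(q^{k}-a_k)|J_1|$ together with the convergence of $\erm^{2\pi\mathrm{i}s\,w_k(a)}$ forces $s|J_1|\in\QQ$ as well. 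Then $|J_0|/|J_1|=(s|J_0|)/(s|J_1|)\in\QQ$, contradicting the hypothesis; so $s=0$ and the flow is weakly mixing. (In any case the statement is a special case of \cite{CS}, Theorem 2.5, and could simply be quoted.)

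The main obstacle is precisely the passage from a merely measurable eigenfunction to a continuous one: without it, recurrence of the fixed point gives no information and the block-length identity of the second step cannot be brought to bear. Supplying this step — by invoking Host's theorem and its tiling-flow version, or by reproving the needed special case through the return-word method used in those works — is where the real effort lies; the renormalization computation and the final elementary number theory are routine by comparison.
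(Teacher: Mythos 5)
The paper does not actually prove this statement: it is quoted from Clark and Sadun (\cite{CS}, Theorem 2.5), so there is no internal argument to compare yours against. Your sketch is, in outline, a faithful reconstruction of the argument in that literature: the reduction of weak mixing of the suspension flow to the nontriviality of the cocycle $e^{2\pi i s f}$ over $(X_\zeta,T,\mu)$, the renormalization $w_k(0)-w_k(1)=\lambda^k(|J_0|-|J_1|)$ coming from the two eigenvalues $q$ and $\lambda=N_0(\zeta(0))-N_0(\zeta(1))$ of the substitution matrix, and the closing arithmetic (which does work both for $|\lambda|\ge 2$, via the $\|\lambda^k t\|\to 0\Rightarrow t\in\ZZ[1/\lambda]$ argument, and trivially for $|\lambda|=1$) are all sound.

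Two caveats. First, the step you yourself isolate as the crux --- replacing a merely measurable solution of $\psi\circ T=e^{2\pi i s f}\psi$ by one to which the supertile-length convergence can be applied --- is not delivered by Host's theorem as usually stated: that theorem concerns genuine eigenvalues of the subshift ($\psi\circ T=\lambda\psi$ with constant $\lambda$), not coboundary equations for the letter-dependent cocycle $e^{2\pi i s f}$, and its suspension-flow analogue is essentially the content of the Clark--Sadun/Solomyak/Bressaud--Durand--Maass analyses you allude to. Citing it is legitimate, but it is the whole theorem, so as a self-contained proof your write-up is incomplete at exactly the point you flag. Second, a smaller imprecision: continuity of the eigenfunction yields $e^{2\pi i s v_k}\to 1$ for \emph{return vectors} $v_k$ of level-$k$ supertiles, i.e.\ for certain nonnegative integer combinations $n_0w_k(0)+n_1w_k(1)$ determined by admissible return words, not literally the convergence of $e^{2\pi i s w_k(0)}$ and $e^{2\pi i s w_k(1)}$ to a common limit. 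On a primitive aperiodic two-letter alphabet one can always extract enough such combinations (for instance $w_k(a)$ from an admissible $aa$, and $w_k(a)+jw_k(b)$ from an admissible $ab^ja$) to run your rationality argument, but that bookkeeping should be made explicit.
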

  See \cite{Ro2} and \cite{S} for more information about tiling dynamical systems  and \cite{CS} for 1-dimensional substitution tiling spaces.

In this subsection we will consider the substitution tiling ${\mathcal{T}}$ of $\mathbb{R}$ arising from  the substitution  $\theta: 0 \rightarrow 001, 1 \rightarrow 11001$ and two intervals $J_0, J_1$ and the corresponding substitution tiling system $\bold{X_{\mathcal{T}}} = (X_{\mathcal{T}}, \mathcal{D}, \lambda, (T_t)_{t \in \mathbb{R}} )$. Our goal is to show that if $\frac{|J_0|}{|J_1|}$ is irrational, then the tiling system $\bold{X_{\mathcal{T}}}$ has minimal self-joinings and is mildly mixing.
  
By writing $a=00$ and $b=1$, we obtain a substitution  $\tilde{\theta}: a \rightarrow abab, b \rightarrow bbab$ from the substitution $\theta$. Let $\tilde{\mathcal{T}}$ be the substitution tiling arising from the substitution $\tilde{\theta}$ and intervals $J_a$ and $J_b$, and let $\bold{X_{\tilde{\mathcal{T}}}} = (X_{\tilde{\mathcal{T}}},  \tilde{\mathcal{D}}, \tilde{\lambda}, (T_t)_{t \in \mathbb{R}} )$ be the corresponding substitution tiling system. If $|J_a| = 2 |J_0|$ and $|J_b| = |J_1|$, then any tiling in $\bold{X_{\mathcal{T}}}$ uniquely corresponds to a tiling in $\bold{X_{\tilde{\mathcal{T}}}}$ by writing $J_0J_0 = J_a$ and $J_1=J_b$. From this observation, it is not hard to see that $\bold{X_{\tilde{\mathcal{T}}}}$ and $\bold{X_{\mathcal{T}}}$ are topologically conjugate if $|J_a| = 2 |J_0|$ and $|J_b| = |J_1|$. Since these systems have unique invariant probability measures, they are also measurably isomorphic. 
Our main result in this section is the following. (Note that by Theorem \ref{CS} the systems  
$\bold{X_{{\mathcal{T}}}}$ and $\bold{X_{\tilde{\mathcal{T}}}}$ appearing in Theorem \ref{msj tiling} are weakly mixing.) 
\begin{Theorem}
\label{msj tiling} 
If $\frac{|J_a|}{|J_b|}$ is irrational, then $\bold{X_{\tilde{\mathcal{T}}}} = (X_{\tilde{\mathcal{T}}},  \tilde{\mathcal{D}}, \tilde{\lambda}, (T_t)_{t \in \mathbb{R}} )$ has minimal self-joinings and is mildly mixing. Hence if $\frac{|J_0|}{|J_1|}$ is irrational, $\bold{X_{{\mathcal{T}}}}$ has minimal self-joinings and is mildly mixing. 
\end{Theorem}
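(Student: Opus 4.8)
The plan is to reduce everything to the two--fold case and then carry the argument of Section 3.1 over to the $\mathbb{R}$--flow. For the reduction: $\tilde{\theta}$ is aperiodic, primitive, of constant length $4$, and $\tilde{\theta}(a)$ contains two $a$'s while $\tilde{\theta}(b)$ contains one, so Theorem \ref{CS} gives that $\bold{X_{\tilde{\mathcal{T}}}}$ is weakly mixing, Theorem 2.2 of \cite{CS} gives that it is not strongly mixing, and it is free ($T_t\neq\mathrm{Id}$ for $t\neq0$) since the tiling is non--periodic. Hence, by Corollary \ref{cor R} and Theorem \ref{mild R}, it is enough to prove that $\bold{X_{\tilde{\mathcal{T}}}}$ has two--fold minimal self--joinings: Corollary \ref{cor R} then upgrades this to all orders and Theorem \ref{mild R} yields mild mixing. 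The statement for $\bold{X_{\mathcal{T}}}$ follows at once from the measurable isomorphism $\bold{X_{\mathcal{T}}}\cong\bold{X_{\tilde{\mathcal{T}}}}$ valid when $|J_a|=2|J_0|$ and $|J_b|=|J_1|$, because minimal self--joinings is an isomorphism invariant and $|J_0|/|J_1|\notin\mathbb{Q}$ forces $|J_a|/|J_b|=2|J_0|/|J_1|\notin\mathbb{Q}$.

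To prove two--fold minimal self--joinings, realize $\bold{X_{\tilde{\mathcal{T}}}}$ as the flow built under the roof $f$ over $(X_{\tilde{\theta}},T)$, where $f\equiv|J_a|$ on $[a]$ and $f\equiv|J_b|$ on $[b]$, and put $\tilde{A}_n=\tilde{\theta}^n(a)=a\,\tilde{C}_n$, $\tilde{B}_n=\tilde{\theta}^n(b)=b\,\tilde{C}_n$; as in Section 3.1 the blocks $\tilde{C}_n$ satisfy the analogous recursions, $\tilde{l}_n:=l(\tilde{C}_n)=4^n-1$, and $\tilde{C}_n$ contains exactly $\tilde{l}_n/3$ letters $a$. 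The verbatim analogues of Lemma \ref{rep} and Lemma \ref{orbit} hold for $X_{\tilde{\theta}}$ with the same proofs; let $m$ denote the associated constant. Fix an ergodic $\overline{\lambda}\in J(\bold{X_{\tilde{\mathcal{T}}}})$ and a $\overline{\lambda}$--generic point $(\mathcal{S}_1,\mathcal{S}_2)$, two--sided generic for the $\mathbb{R}$--Birkhoff averages. If $\mathcal{S}_2=T_g\mathcal{S}_1$ for some $g\in\mathbb{R}$, then $\overline{\lambda}$ is the off--diagonal carried by the graph of $T_g$. Otherwise the underlying points of $X_{\tilde{\theta}}$ lie on distinct $T$--orbits, so by the analogue of Lemma \ref{orbit} there are $n_k\uparrow\infty$ such that, after possibly swapping $\mathcal{S}_1$ and $\mathcal{S}_2$, the tile--word $\tilde{C}_{n_k}a\tilde{C}_{n_k}$ occurs at tile--index $m_{1,n_k}$ in $\mathcal{S}_1$ and $\tilde{C}_{n_k}b\tilde{C}_{n_k}$ at tile--index $m_{2,n_k}$ in $\mathcal{S}_2$, with $|m_{i,n_k}|\le(m+3)(\tilde{l}_{n_k}+2)$ and $|m_{1,n_k}-m_{2,n_k}|\le\tfrac12(\tilde{l}_{n_k}+3)$.

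Now pass to arc length. Writing $L_n:=\tilde{l}_n(\tfrac13|J_a|+\tfrac23|J_b|)$ for the length of a copy of $\tilde{C}_n$, the uniform unique ergodicity of $(X_{\tilde{\theta}},T)$ applied to $f$ (Theorem \ref{unique}) shows that the left copies of $\tilde{C}_{n_k}$ in $\mathcal{S}_1$ and $\mathcal{S}_2$ begin at points whose difference $g_k$ obeys $|g_k|\le(\tfrac12+o(1))L_{n_k}$, and that the whole configuration sits within $O(L_{n_k})$ of the origin. Hence the arc $M_k$ on which the two copies of $\tilde{C}_{n_k}$ overlap has $|M_k|\ge L_{n_k}-|g_k|\ge(\tfrac12-o(1))L_{n_k}$. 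Let $t_k:=L_{n_k}+|J_a|$ (the length of $\tilde{C}_{n_k}a$) and $\delta:=|J_b|-|J_a|\neq0$. On $M_k$, $\mathcal{S}_1$ agrees with its $t_k$--translate, since the two copies of $\tilde{C}_{n_k}$ in $\tilde{C}_{n_k}a\tilde{C}_{n_k}$ are $t_k$ apart, while $\mathcal{S}_2$ agrees with its $(t_k+\delta)$--translate, since the two copies of $\tilde{C}_{n_k}$ in $\tilde{C}_{n_k}b\tilde{C}_{n_k}$ are $L_{n_k}+|J_b|$ apart. Choosing $L_k$ to be an interval about the origin of length comparable to $L_{n_k}$ that contains the configuration and its $t_k$--translate, one gets $M_k\subset L_k$, $t_k+M_k\subset L_k$ and $|M_k|/|L_k|$ bounded below by a fixed positive constant, exactly as in the proof of Theorem \ref{msj theorem}; since the cylinder windows in $X_{\tilde{\mathcal{T}}}$ are fixed while $L_{n_k}\to\infty$, the hypotheses of the $\mathbb{R}$--flow analogue of Lemma \ref{msj lemma} (proved as \cite{Ru} Lemma 6.15) are met. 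That lemma gives that $\overline{\lambda}$ is $\mathrm{Id}\times T_\delta$--invariant, and since $T_\delta$ is ergodic ($\bold{X_{\tilde{\mathcal{T}}}}$ being weakly mixing) Lemma \ref{disjoint lemma} forces $\overline{\lambda}=\tilde{\lambda}\times\tilde{\lambda}$. Thus every ergodic two--fold self--joining is an off--diagonal or the product, which is the desired two--fold minimal self--joinings.

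The step I expect to be the main obstacle is this last passage from the combinatorial statement to the flow. Lemma \ref{orbit} only pins down the tile--indices of the discriminating blocks $\tilde{C}_{n_k}a\tilde{C}_{n_k}$ and $\tilde{C}_{n_k}b\tilde{C}_{n_k}$ up to $\tfrac12\tilde{l}_{n_k}$, and one must make sure that after conversion to arc length the overlap $M_k$ still occupies a fixed positive fraction of the window, uniformly in $k$ and for \emph{every} irrational ratio $|J_a|/|J_b|$; the crude bound $|g_k|\le\tfrac12\tilde{l}_{n_k}\max(|J_a|,|J_b|)$ is useless when $|J_a|/|J_b|$ is large, and it is precisely the uniform form of unique ergodicity that replaces $\max(|J_a|,|J_b|)$ by the mean $\tfrac13|J_a|+\tfrac23|J_b|$ in the estimate of $|g_k|$. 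The remaining ingredients — recording the $\mathbb{R}$--flow version of Lemma \ref{msj lemma}, and the fact that minimal self--joinings passes through the isomorphism $\bold{X_{\mathcal{T}}}\cong\bold{X_{\tilde{\mathcal{T}}}}$ — are routine adaptations.
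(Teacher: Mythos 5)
Your proposal is correct and follows essentially the same route as the paper: reduce to two-fold minimal self-joinings, locate the discriminating patches $\tilde{C}_{n}a\tilde{C}_{n}$ versus $\tilde{C}_{n}b\tilde{C}_{n}$ (the paper's $C_n00C_n$ versus $C_n1C_n$ after rewriting $J_a=J_0J_0$, $J_b=J_1$) via the structure lemma, and apply the $\mathbb{R}$-flow version of Rudolph's lemma with the shift discrepancy $|J_b|-|J_a|$. The only difference is that you make explicit the arc-length overlap estimate (via uniform unique ergodicity) that the paper asserts with "their overlap is sufficiently large," which is a welcome but not divergent addition.
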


The following lemma is the $\mathbb{R}$-flow version of Lemma \ref{msj lemma}. It is not hard to prove Lemma \ref{msj tiling lemma} using the argument in the proof of Lemma 6.15 in \cite{Ru}.
\begin{Lemma}
\label{msj tiling lemma}
 Let $\bold{X} = (X, \mathcal{X}, \mu, (T_t)_{t \in \mathbb{R}})$ be an ergodic system. Let $\alpha \ne 0$ such that $T_{\alpha}$ is an ergodic transformation. Let $\{P_i \}$ be a countable set of cylinder sets generating $\mathcal{X}$. Let $\overline{\mathcal{A}} = \{P_l \times P_m\}$ be a countable generating algebra of $X \times X$.
 Assume that
 \begin{enumerate}
\item $\overline{\mu} \in J(\bold{X})$ is a two-fold ergodic joining.
\item $(x,y) \in X \times X$ satisfies 
    $$\lim_{L \rightarrow \infty} \frac{1}{L} \int_{-L}^{0} 1_A(T_{t}x, T_{t} y) \, d t = \lim_{L \rightarrow \infty} \frac{1}{L} \int_{0}^{L} 1_A(T_t x, T_t y) \, dt = \overline{\mu}(A)$$
for all $A \in \overline{\mathcal{A}}$.
\item There are intervals $L_k = [i_k, j_k] \subset \mathbb{R}$,  $M_k = [a_k, b_k]$, $t_k \in \mathbb{R}$ and $\gamma > 0$ such that
    $i_k \leq 0 \leq j_k$ and $j_k - i_k \rightarrow \infty$, $M_k \subset L_k$ and $t_k + M_k \subset L_k$, and $|M_k| \geq \gamma |L_k| $.
\item For any cylinder sets $P_l, P_m$,there exists $K=K(P_l, P_m)$ such that if $k \geq K$, then for all $t \in M_k$,
\subitem $T_t x \in P_l$ if and only if $T_{t_k + t}x \in P_l$,
 \subitem $T_{t} y \in P_m$ if and only if $T_{t_k + t +\alpha} y \in P_m $.
 \end{enumerate}  
Then $\overline{\mu}$ is an $I \times T_{\alpha}$ invariant measure on $X \times X$ and hence $\overline{\mu} = \mu \times \mu$.
\end{Lemma}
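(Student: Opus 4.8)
The plan is to deduce from hypotheses (1)--(4) that $\overline{\mu}$ is invariant under $I\times T_{\alpha}$; once this is known, $\overline{\mu}$ is a joining of the trivial system $(X,\mathcal{X},\mu,I)$ with the ergodic system $(X,\mathcal{X},\mu,T_{\alpha})$, and Lemma \ref{disjoint lemma} immediately gives $\overline{\mu}=\mu\times\mu$. Since $\overline{\mathcal{A}}=\{P_l\times P_m\}$ generates $\mathcal{X}\otimes\mathcal{X}$, it suffices to show
\[
\overline{\mu}(P_l\times P_m)=\overline{\mu}\big((I\times T_{\alpha})^{-1}(P_l\times P_m)\big)\qquad\text{for all }l,m .
\]

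Fix $P_l,P_m$ and let $K=K(P_l,P_m)$ be as in (4). The engine of the argument is the pointwise identity, valid for every $k\ge K$ and every $t\in M_k$,
\[
1_{P_l}(T_t x)\,1_{P_m}(T_t y)\;=\;1_{P_l}(T_{t_k+t}x)\,1_{P_m}(T_{t_k+t+\alpha}y),
\]
which is just a restatement of the two equivalences in (4). Integrating this over $t\in M_k$, performing the substitution $s=t_k+t$ in the resulting right-hand integral, and using $1_{P_m}(T_{s+\alpha}y)=1_{T_{\alpha}^{-1}P_m}(T_s y)$, one obtains, with $B:=(I\times T_{\alpha})^{-1}(P_l\times P_m)=P_l\times T_{\alpha}^{-1}P_m$ (a translate of a product cylinder, since $T_{\alpha}$ is a homeomorphism carrying cylinders to cylinders),
\[
\int_{M_k}1_{P_l\times P_m}(T_t x,T_t y)\,dt\;=\;\int_{t_k+M_k}1_{B}(T_s x,T_s y)\,ds .
\]

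To finish I would divide both sides by $|M_k|$ and let $k\to\infty$. Because $M_k\subset L_k$ and $t_k+M_k\subset L_k$ with $0\in L_k$, $|L_k|\to\infty$ and $|M_k|\ge\gamma|L_k|$, both $M_k$ and $t_k+M_k$ have length tending to infinity while their endpoints stay bounded by $|L_k|\le\gamma^{-1}|M_k|$. Splitting each average at the origin and invoking the forward and backward genericity of $(x,y)$ in (2) — after, if necessary, enlarging the countable family $\{P_i\}$ by the countably many translated cylinders $T_{\alpha}^{-1}P_i$ so that (2) also applies to $B$ — a routine estimate shows that for an interval $[a,b]$ with $b-a\to\infty$ and $|a|,|b|=O(b-a)$ the average $\tfrac1{b-a}\int_a^b 1_A(T_tx,T_ty)\,dt$ converges to $\overline{\mu}(A)$ (write it as a bounded combination of the two anchored averages $\tfrac1b\int_0^b$ and $\tfrac1a\int_0^a$). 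Applying this to $A=P_l\times P_m$ on the intervals $M_k$ and to $A=B$ on the intervals $t_k+M_k$ yields $\overline{\mu}(P_l\times P_m)=\overline{\mu}(B)$; hence $\overline{\mu}$ is $I\times T_{\alpha}$-invariant, and then $\overline{\mu}=\mu\times\mu$ by Lemma \ref{disjoint lemma}.

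The passage from (4) to the pointwise identity and the change of variables are routine; the one place that needs genuine care — and where I expect the main (if modest) obstacle to lie — is the convergence of the \emph{windowed} ergodic averages over $M_k$ and $t_k+M_k$, which are not anchored at $0$. This is exactly where hypothesis (3) is used (the linear lower bound $|M_k|\ge\gamma|L_k|$ together with $0\in L_k$ keeps the endpoints $O(|M_k|)$), and it is also the point where one must make sure that the generic point $(x,y)$ controls the averages of $1_{P_l}\otimes(1_{P_m}\circ T_{\alpha})$, i.e. of the translated product cylinder $B$; this is handled precisely as in the proof of Lemma 6.15 in \cite{Ru}.
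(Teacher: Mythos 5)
Your argument is correct and is precisely the continuous-time adaptation of Rudolph's Lemma 6.15 that the paper invokes without writing out: the pointwise identity from (4), the change of variables, the windowed ergodic averages controlled via $|M_k|\geq\gamma|L_k|$ and $0\in L_k$, and then Lemma \ref{disjoint lemma}. The two points you flag as needing care --- convergence of averages over intervals not anchored at $0$, and extending the genericity in (2) to the translated cylinder $T_{\alpha}^{-1}P_m$ (harmless, since translates of cylinders are again cylinders and the generic point is chosen for a countable family) --- are exactly the right ones, and you handle both.
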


\begin{proof}[Proof of Theorem \ref{msj tiling}]
We will assume that $|J_b| > |J_a|$. The proof for the other case is analogous.
For $\alpha= |J_b|-|J_a| $, $T_{\alpha}$ is a weak-mixing transformation on $(X_{\tilde{\mathcal{T}}},   \tilde{\mathcal{D}}, \tilde{\lambda})$, since $\bold{X_{{\tilde{\mathcal{T}}}}}$ is weakly mixing.

Let $\overline{\lambda}$ be a twofold ergodic joining of $\bold{X_{\tilde{\mathcal{T}}}}$  and let $(\mathcal{S}_1,\mathcal{S}_2)$ be a $\overline{\lambda}$-generic point satisfying the condition (2) in Lemma \ref{msj tiling lemma}.

 If $\mathcal{S}_1$ and $\mathcal{S}_2$ are on the same orbit under $(T_t)_{t \in \mathbb{R}}$, then $\overline{\lambda}$ is off-diagonal. Otherwise,
we can view $\mathcal{S}_1$ and $\mathcal{S}_2$ as the tilings arising from tiles $J_0$ and $J_1$ by writing $J_a = J_0 J_0$ and $J_b =J_1$. Then, for infinitely many $n$, patches corresponding to $C_n1C_n$ and $C_n00C_n$ occurs in $\mathcal{S}_1$ and $\mathcal{S}_2$ such that they occur not too far from the origin and their overlap is sufficiently large. By applying the same argument as in the proof of Theorem \ref{msj theorem} with Lemma \ref{msj tiling lemma}, $\overline{\lambda}$ is $I \times T_{\alpha} $ invariant, so $\overline{\lambda} = \tilde{\lambda} \times \tilde{\lambda}$. Hence $\bold{X_{\tilde{\mathcal{T}}}}$ has two-fold minimal self-joinings, and so it is mildly mixing. Moreover, it has minimal self-joinings of all orders since the substitution tiling system is not strongly mixing. 
\end{proof} 

\subsection{More examples}\mbox{}

The previous results can be generalized. Let $s$ be a primitive substitution of constant length on the alphabet $\{a,b\}$ with $s(a) = aA$ and $s(b) = bA$, where $A$ is a finite word over $\{a,b\}$. In this subsection we will show that a substitution tiling system arising from $s$ and two intervals $J_a, J_b$, where $\frac{|J_a|}{|J_b|}$ is irrational, has minimal self-joinings and so is mildly mixing.

Let us first show that this systems is weakly mixing. By Theorem \ref{CS}, it is enough to show that $s$ is an aperiodic substitution.  Suppose that $T^k(x) = x$ for some $x \in X_{s}$. There is a non-negative integer $n$ such that $l(s^{n}(a)) \leq k < l(s^{n+1}(a))$. Let $A_n$ be the block such that $s^n(a) = aA_n$ and $s^n(b)=bA_n$. Note that $A=A_1$. The blocks $a A_n$ and $b A_n$ both appear in $x$. Since $T^k(x) =x$, the $k$-th letter in $A_n$ is $a$ from $a A_n$ and the $k$-th letter in $A_n$ is $b$ from $b A_n$, which leads to a contradiction.
Hence, a tiling system arising from the substitution $s$ and two intervals  $J_a, J_b$, where $\frac{|J_a|}{|J_b|}$ is irrational, is weakly mixing.
We have the following lemma similar to Lemma \ref{orbit}.

\begin{Lemma}
\label{induced}
There is $m \in \mathbb{N}$ such that any admissible word $W$ of $X_s$ with $l(W) \geq m$ has the following unique expression
$$W = K_1 v_1 A v_2 A \cdots v_k A K_2,$$
where $K_1$ is  a suffix of $s(a)$ or $s(b)$, $K_2$ is a prefix of $s(a)$ or $s(b)$ and $v_i = a$ or $b$.
\end{Lemma}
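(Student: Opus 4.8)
The plan is to mirror the proof of Lemma \ref{rep}, using the structure of the substitution $s$ to pin down where a decomposition of an admissible word into blocks of the form $v_i A$ can begin. First I would observe that existence of \emph{some} expression $W = K_1 v_1 A v_2 A \cdots v_k A K_2$ is automatic: writing $W$ as a subword of $s^{N}(c)$ for some letter $c$ and large $N$, and applying $s$ once to the $s^{N-1}(c)$-level decomposition, each image $s(a) = aA$ or $s(b) = bA$ contributes a leading letter followed by $A$, so $W$ inherits such a decomposition up to a prefix $K_2$ (a prefix of $s(a)$ or $s(b)$) and a suffix $K_1$ of the same form. So the content of the lemma is \emph{uniqueness}, i.e. that the positions of the $A$-blocks inside $W$ are forced once $l(W)$ is large enough.

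For uniqueness, the key step is to find, by primitivity of $s$, an integer $m$ so that every admissible word of length $\ge m$ contains an occurrence of a fixed ``marker'' pattern that cannot be straddled by two distinct block-decompositions. Concretely, I would pick a short admissible word $w$ that appears in $s(a)$ or $s(b)$ straddling a block boundary (for the specific substitutions $\theta,\eta$ this was the word $11001$, whose initial $11$ can only be a suffix of $\theta(00)$ or $\theta(1)$ in exactly one way), and show: if two decompositions of $W$ disagreed, then at the first position where they disagree one would be starting a new $vA$-block (so $W$ would contain, starting at that position, a word that is a prefix of $s(a)$ or $s(b)$), while the other decomposition would be somewhere strictly inside its current $vA$-block. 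Comparing these two local constraints against the admissible words of $X_s$ forces a contradiction, provided $m$ was chosen large enough that the marker is guaranteed to occur near the alleged point of disagreement. Since $s(a)=aA$ and $s(b)=bA$ share the common tail $A$ and differ only in the first letter, the relevant case analysis is short: a would-be shifted block boundary must align the letter $a$ (from $aA$) with the letter $b$ (from $bA$) at the same position of $A$, which is impossible.

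The main obstacle is that, unlike $\theta$ and $\eta$, the word $A$ here is arbitrary, so I cannot exhibit an explicit marker like $11001$; instead I must argue abstractly that such a distinguishing pattern exists. The cleanest way is: since $s$ is primitive, some power $s^{p}$ has all words $s^{p}(a), s^{p}(b)$ containing both $aA$ and $bA$ as subwords, hence any sufficiently long admissible word $W$ contains an occurrence of $aA$ and an occurrence of $bA$; by a length argument ($l(W)\ge m$ with $m$ a suitable multiple of $l(aA)$) at least one such occurrence lies in the ``interior'' of $W$ far from both ends. Then if a second decomposition existed whose $A$-blocks were offset from those of the canonical one, following the offset through this interior occurrence would require the $(l(aA))$-th position of $W$ in two overlapping windows to be simultaneously the last letter of $A$ (so $a$ or $b$ according to the letter of $A$) and the first letter of a new block $s(a)$ or $s(b)$ while being preceded by the \emph{wrong} letter — the same contradiction as in the proof of Lemma \ref{rep}. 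I would then take $m$ to be the maximum of the length needed to guarantee the interior occurrence and the constant coming from primitivity, completing the proof.
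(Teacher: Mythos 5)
Your argument is correct and follows essentially the same route as the paper: the heart of both is that a decomposition into constant-length blocks misaligned with the canonical one would force some fixed position of $A$ to read both $a$ and $b$. The only difference is cosmetic: the paper packages the marker concretely as $s(bba)=bA\,bA\,aA$ (after noting that $aa$ or $bb$, hence $aab$ or $bba$, is admissible), whereas you extract congruently placed occurrences of $aA$ and $bA$ from a high power $s^{p}$; the contradiction is identical.
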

\begin{proof}
Since $s$ is primitive, $aa$ or $bb$ is an admissible word. We may assume that $bb$ is admissible. Then $bba$ is also admissible, otherwise there exists a sequence $x= (x_n) \in X_s$ such that $x_n=b$ for all $n$. 

Choose $m$ such that any admissible word $W = w_1 w_2 \cdots w_l$ with $l \geq m$ contains $s(bba) = bA bA aA$. It is clear that we have the above expression for $W$. If there is another expression, then $\alpha A \beta A$ occurs in $bA bA aA $, where $\alpha$ occurs in the first $A$ of $bA bA aA$ and $\beta$ occurs in the second $A$ of $bA bA aA$.  Then the letter $b$ occurs at some position $i$ of the first $A$ of $\alpha A \beta A$ and the letter $a$ occurs in the same position $i$ of the second $A$ of $\alpha A \beta A$, which is a contradiction.
\end{proof}

Let  $(X, \mathcal{X}, \mu, T )$ be a subshift generated by $x \in \{0,1\}^{\mathbb{Z}}$, where $x$ is obtained from a sequence $y \in X_s$ by writing $a=00$ and $b=1$.  Let $C_n$ be the block of letters $0$ and $1$ obtained from $A_n$ by writing $a=00$ and $b=1$. Applying Lemma \ref{induced}, we can obtain a result similar to Lemma \ref{orbit} for  the the subshift $(X, \mathcal{X}, \mu, T )$ with an appropriate $m$. The methods used in Sections 3.1 and 3.2 imply that $(X, \mathcal{X}, \mu, T )$ has minimal self-joinings and that the tiling dynamical system arising from the substitution $s$ and intervals $J_0, J_1$, where $ \frac{|J_0|}{|J_1|}$ is irrational, has minimal self-joinings.  

\section{the substitution $\eta: 0 \rightarrow 001,1 \rightarrow 11100$}

Let $(X_{\eta}, \mathcal{X}, \mu, T )$ be the dynamical system arising from the substitution $\eta$ and two intervals $J_0, J_1$ of irrational ratio. Let $\tilde{\eta}$ be the substitution $a \rightarrow abab, b \rightarrow bbba$. Then the substitution tiling system arising from $\eta$ and two intervals $J_0, J_1$ is isomorphic to the substitution tiling system arising from $\tilde{\eta}$ and two intervals $J_a, J_b$ if $|J_a| = 2 |J_0|$ and $|J_b|=|J_1|$. In \cite{BR} it was shown that the substitution tiling system arising from $\tilde{\eta}$ and two intervals $J_a, J_b$, where $\frac{|J_a|}{|J_b|}$ is irrational, is weakly mixing. Thus, the substitution tiling system arising from ${\eta}$ and two intervals $J_0, J_1$, where$\frac{|J_0|}{|J_1|}$ is irrational, is weakly mixing. In this section we will strengthen these results. 

\begin{Theorem} 
\label{eta}
Let $\eta$ be a substitution with $\eta (0) = 001$ and $\eta(1)= 11100$.
\begin{enumerate}[(a)]
\item The substitution dynamical system $(X_{\eta}, \mathcal{X}, \mu, T )$ has minimal self-joinings and is mildly mixing.
\item The substitution tiling system arising from $\eta$ and intervals $J_0$ and $J_1$, where $\frac{|J_0|}{|J_1|}$ is irrational, has minimal self-joinings and is mildly mixing.
\end{enumerate}
 \end{Theorem}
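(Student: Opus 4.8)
The plan is to mirror, step by step, the argument already carried out for $\theta$ in Sections 3.1 and 3.2, since the substitution $\eta: 0\to 001,\ 1\to 11100$ (equivalently $\tilde\eta: a\to abab,\ b\to bbba$ after writing $a=00$, $b=1$) has exactly the same structural features that were exploited there. First, for part (a), I would set up the analogue of Lemma \ref{rep}: introduce the $n$-blocks $A_n=\eta^n(00)$ and $B_n=\eta^n(1)$, observe that $A_n$ and $B_n$ share a long common suffix, write $A_n=00\,C_n$ and $B_n=11\,C_n$ (here the prefixes differ from the $\theta$ case, since $\eta(1)$ ends in $00$ and $\eta(0)$ ends in $1$, but the common-suffix phenomenon persists because $\theta(0)$ and $\eta(0)$ coincide and one checks directly that $\eta^n(00)$ and $\eta^n(1)$ agree from the third, resp. second, letter onward), and prove the unique desubstitution: any long admissible word of $X_\eta$ has a unique expression $K_1 v_1 C_1 v_2 C_1\cdots v_k C_1 K_2$ with $v_i\in\{00,1\}$. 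The uniqueness proof uses primitivity to force a fixed marker word (e.g. $\eta(bba)$ or $\eta(001)$, whichever is admissible) into any long word and then rules out a second parsing by a letter-comparison argument exactly as in Lemma \ref{rep} and Lemma \ref{induced}.

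The next step is the orbit-separation lemma: if $x,y\in X_\eta$ are not on the same $T$-orbit, then for infinitely many $n$ there are positions $m_{1,n},m_{2,n}$, bounded by a constant multiple of $l_n=l(C_n)$ and with $|m_{1,n}-m_{2,n}|$ small relative to $l_n$, at which one of $x,y$ shows $C_n\,1\,C_n$ and the other shows $C_n\,00\,C_n$ (or, depending on the prefix conventions, $C_n\,11\,C_n$ versus $C_n\,00\,C_n$ — the key point is merely that the two blocks sandwiched between copies of $C_n$ have \emph{different lengths}, here $1$ versus $2$, or $2$ versus $2$ in a shifted position, and this must be checked explicitly). This is proved verbatim as in Lemma \ref{orbit}: express both points in $n_0$-blocks, align a block containing $x_0$ with one containing $(T^k y)_0$, find the nearest index $s_{n_0}$ at which the block sequences disagree, pass to higher-level blocks to drive $|s_n|$ down below the desubstitution constant $m$, and read off the desired occurrences at that level. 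With this in hand, Theorem \ref{eta}(a) follows exactly as Theorem \ref{msj theorem} did: $(X_\eta,\mathcal X,\mu,T)$ is weakly mixing and not strongly mixing (known from \cite{DK} / \cite{BR}), hence $\mu$ is non-atomic; given an ergodic self-joining $\overline\mu$ take a generic $(x,y)$; if $x,y$ are on the same orbit then $\overline\mu$ is off-diagonal; otherwise feed the alphabet $\mathcal A_s$ of letters of $C_s$, the cylinders $P_i^s$, the intervals $L_k,M_k$ from the separation lemma, and the shift $t_k=l_{n_k}+1$ (the length discrepancy of the two middle blocks) into Lemma \ref{msj lemma} to conclude $\overline\mu=\mu\times\mu$. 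Then Corollary \ref{cor R} upgrades to MSJ of all orders and Theorem \ref{mild cor} gives mild mixing.

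For part (b), I would pass to $\tilde\eta: a\to abab,\ b\to bbba$ with intervals $J_a=J_0J_0$, $J_b=J_1$, note as in Section 3.2 that the tiling systems $\bold{X_{\mathcal T}}$ and $\bold{X_{\tilde{\mathcal T}}}$ are topologically conjugate hence measurably isomorphic, and that weak (but not strong) mixing is known from \cite{BR} and Theorem \ref{CS}. Assuming WLOG $|J_b|>|J_a|$ (the other case is symmetric), set $\alpha=|J_b|-|J_a|$, which is nonzero by irrationality of $|J_a|/|J_b|$, so $T_\alpha$ is ergodic (indeed weakly mixing). Realize a generic $(\mathcal S_1,\mathcal S_2)$ for an ergodic self-joining $\overline\lambda$; if they share an $\mathbb R$-orbit, $\overline\lambda$ is off-diagonal; otherwise view them as tilings by $J_0,J_1$, apply the subshift separation lemma to get patches $C_n\,1\,C_n$ and $C_n\,00\,C_n$ (resp. the length-distinct pair) occurring near the origin with large overlap, and apply Lemma \ref{msj tiling lemma} with $t_k$ equal to the length in $\mathbb R$ of the longer middle block minus that of the shorter — which, because the overlap is a union of whole tiles, is exactly $\alpha$ (or a fixed multiple of it that one identifies with a power of $T_\alpha$, still ergodic) — to conclude $\overline\lambda=\tilde\lambda\times\tilde\lambda$. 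Thus $\bold{X_{\tilde{\mathcal T}}}$ has twofold MSJ, hence MSJ of all orders by Corollary \ref{cor R} and mild mixing by Theorem \ref{mild R} (the $\mathbb R$-action is free), and the same holds for $\bold{X_{\mathcal T}}$ by isomorphism.

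The main obstacle I anticipate is purely combinatorial and lies entirely in part (a): getting the desubstitution/uniqueness lemma right for $\eta$. The prefixes of $A_n$ and $B_n$ are $00$ and $11$ rather than $00$ and $1$, so the "$00$ versus $1$" length mismatch that powered the $\theta$ argument must be replaced by the correct mismatch for $\eta$, and one has to verify carefully (i) which short marker word is admissible so that the uniqueness argument in the style of Lemma \ref{rep} goes through, and (ii) that the two middle blocks exhibited by the separation lemma genuinely have different lengths (equivalently that the shift $t_k$ is a nonzero constant, matching $\alpha$ in the tiling case). Once these explicit $\eta$-specific checks are done, every remaining step is a transcription of the proofs of Theorem \ref{msj theorem} and Theorem \ref{msj tiling}.
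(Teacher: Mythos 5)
Your high-level framework (generic point for an ergodic self-joining, orbit dichotomy, a structure lemma fed into Lemma \ref{msj lemma} resp.\ Lemma \ref{msj tiling lemma}, then Corollary \ref{cor R} and Theorems \ref{mild cor}, \ref{mild R}) is the same as the paper's. But the combinatorial core of your part (a) rests on a false structural claim: you assert that $A_n=\eta^n(00)$ and $B_n=\eta^n(1)$ share a long common suffix $C_n$, with $\eta(00)$ and $\eta(1)$ agreeing from the third, resp.\ second, letter onward. This fails already at level one: $\eta(00)=001001$ and $\eta(1)=11100$, and the tails in question are $1001$ and $1100$, which disagree. In fact $\eta^n(00)$ and $\eta^n(1)$ always end in \emph{different} letters (their last letters are those of $\eta^{n-1}(1)$ and $\eta^{n-1}(0)$, which alternate between $0$ and $1$ in opposite phase), so these blocks share no common suffix at all. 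Consequently the whole ``$C_n\,v\,C_n$'' mechanism of Lemma \ref{rep} cannot be transplanted, and your fallback ``$C_n 11 C_n$ versus $C_n 00 C_n$'' would in any case give middle blocks of \emph{equal} length, hence zero shift discrepancy and no $I\times T$ invariance --- precisely the failure mode you flag in your point (ii) but do not resolve.

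What the paper actually does (Lemma \ref{orbit2}) is genuinely different: it exploits the fact that $A_n$ and $B_n$ have lengths differing by exactly one, $l(A_n)=l(B_n)+1$, together with the recursions $A_{n+1}=A_nB_nA_nB_n$ and $B_{n+1}=B_nB_nB_nA_n$. A five-way case analysis on the relative shift $k$ produces, not too far from the origin and with large overlap, one of the aligned pairs $(A_nB_nA_n,\,B_nB_nB_n)$, $(B_nA_nB_n,\,B_nB_nB_n)$ or $(A_nA_n,\,B_nB_n)$; in each pair the distance between matching outer blocks differs by $1$ between the two sequences ($2l_n-1$ versus $2l_n-2$, or $l_n$ versus $l_n-1$), which supplies the nonzero $t_k$-discrepancy needed in Lemma \ref{msj lemma} (and its $\mathbb{R}$-version for part (b)). This repetition-of-whole-blocks idea, rather than any common-suffix structure, is the missing ingredient; without it your argument for $\eta$ does not get off the ground.
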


Define $n$-blocks $A_n = \eta^n(00)$ and $B_n= \eta^n(1)$. Denote $l_n = l(A_n) = l(B_n)+1$. Note that $l_{n+1} = 4l_n+1$. The following lemma is similar to Lemma \ref{rep}.  

 \begin{Lemma}
 \label{Lemma4.1}
There is $m \in \mathbb{N}$ such that any admissible word $W$ of $X_{\eta}$ with $l(W) \geq m$ has the following unique expression
$$W = K_1  v_1 C_1 v_2 C_2 \cdots v_k C_k K_2,$$
where $K_1$ is a suffix of $\eta(00)$ or $\eta(1)$, $K_2$ is a prefix of $\eta(00)$ or $\eta(1)$, and $v_iC_i$ is $\eta(00)$ or $\eta(1)$.
\end{Lemma}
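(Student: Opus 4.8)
The plan is to proceed exactly as in the proof of Lemma~\ref{rep}: to exhibit a short ``marker'' word which must occur in every sufficiently long admissible word of $X_\eta$ and which, whenever it occurs in an admissible word, can occur only aligned with a block of the decomposition, and then to let the block structure propagate outward from the marker. Existence of a representation of the stated form I would treat as essentially obvious, as in Lemma~\ref{rep}: any admissible $W$ is a factor of some $\eta^N(a)$, which is the concatenation of the blocks $\eta(0)=001$ and $\eta(1)=11100$ indexed by the letters of $\eta^{N-1}(a)$; since every maximal run of $0$'s in a word of $X_\eta$ has even length --- this is checked directly inside $\eta(0)$ and $\eta(1)$ and at the four types of block junction --- the $\eta(0)$-blocks occur in adjacent pairs, so $\eta^N(a)$ is in fact a concatenation of $\eta(00)=001001$ and $\eta(1)=11100$, and cutting at the two ends of $W$ yields $W=K_1\,v_1C_1\cdots v_kC_k\,K_2$ with each $v_iC_i$ equal to $\eta(00)$ or $\eta(1)$, $K_1$ a suffix of one of these blocks and $K_2$ a prefix of one of them.

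For uniqueness I would fix, using the primitivity of $\eta$, an integer $m$ so large that every admissible word $W$ with $l(W)\ge m$ contains an occurrence of $\eta(1)=11100$; write $w_iw_{i+1}w_{i+2}w_{i+3}w_{i+4}=11100$. The heart of the matter is to show that in any admissible decomposition of $W$ this occurrence is a single $\eta(1)$-block. If it were not, some block would begin at position $i+j$ for some $j\in\{1,2,3,4\}$, so that $w_{i+j}\cdots w_{i+4}$ is a prefix of $\eta(00)=001001$ or of $\eta(1)=11100$, while $w_i\cdots w_{i+j-1}$ is a suffix of one of these two blocks. For $j=1$ and $j=2$ the candidate tails $1100$ and $100$ are prefixes of neither $001001$ nor $11100$, ruling these cases out at once. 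For $j=3$ and $j=4$ the tails $00$ and $0$ are indeed prefixes of $001001$, but then $w_i\cdots w_{i+j-1}$ equals $111$ or $1110$, which is not a suffix of $001001$ (whose relevant suffixes are $001$ and $1001$) nor of $11100$ (whose relevant suffixes are $100$ and $1100$) --- a contradiction. Hence the marker is a full $\eta(1)$-block in every decomposition. Since $\eta(00)$ is the only block beginning with $0$ and $\eta(1)$ the only one beginning with $1$ (equivalently, $\eta(00)$ the only one ending in $1$ and $\eta(1)$ the only one ending in $0$), once one block boundary of $W$ is known, the identity and hence the right endpoint of the adjacent block are forced, and so on; propagating both ways from the $\eta(1)$-block found at the marker shows that any two admissible decompositions of $W$ agree, $K_1$ and $K_2$ included.

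The step I expect to be the real obstacle is the marker analysis --- choosing $11100=\eta(1)$ as the right synchronizing word and checking that it can never straddle a block boundary. What makes it work is the asymmetry between the two blocks: $\eta(1)$ begins with the run $111$, whereas neither $\eta(00)=001001$ nor $\eta(1)=11100$ ends with even the run $11$, so the leading $111$ of the marker cannot sit at the tail of a preceding block; together with the two elementary prefix checks for $j=1,2$ this kills every misaligned placement. The remaining ingredients --- the primitivity input producing $m$, the even-run observation behind existence, and the deterministic propagation --- are routine, and this lemma then feeds into an analogue of Lemma~\ref{orbit} for $X_\eta$ and into the proof of Theorem~\ref{eta}, exactly as Lemma~\ref{rep} and Lemma~\ref{orbit} did for $\theta$ in Sections~3.1 and~3.2.
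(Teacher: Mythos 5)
Your proof is correct and takes essentially the same route as the paper, whose entire argument for this lemma is the remark that the proof of Lemma \ref{rep} goes through with the marker word $11100=\eta(1)$ in place of $11001$. Your explicit prefix/suffix case analysis for $j=1,\dots,4$, the even-run-of-$0$'s observation giving existence, and the two-sided propagation from the synchronized marker are precisely the details that argument requires, and they all check out.
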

The proof for this lemma is just the proof of Lemma \ref{rep}, using the word $11100$ instead of $11001$. From this result, we have the following structure lemma.
\begin{Lemma}
\label{orbit2} Let $m$ be the positive integer in Lemma \ref{Lemma4.1}. 
If $x$ and $y$ are not in the same orbit, then for infinitely many $n$, there exists $m_1$ and $m_2$ with $|m_i| \leq (m +3) l_{n+1} $ and $|m_1 - m_2| \leq \frac{1}{2} l_{n} + 2$ such that one of the following occurs:
\begin{enumerate}[(i)]
\item $A_n B_n A_n$ and $B_n B_n B_n$ occur at $x_{m_1}$ and $y_{m_2}$ (or $y_{m_2}$ and $x_{m_1}$) respectively. 
\item $B_n A_n B_n$ and $B_n B_n B_n$ occur at $x_{m_1}$ and $y_{m_2}$ (or $y_{m_2}$ and $x_{m_1}$) respectively. 
\item $A_n A_n$ and $B_n B_n$ occur at $x_{m_1}$ and $y_{m_2}$ (or  $y_{m_2}$ and $x_{m_1}$) respectively. 
\end{enumerate}
\end{Lemma}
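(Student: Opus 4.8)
The plan is to imitate the proof of Lemma~\ref{orbit} line by line, with Lemma~\ref{Lemma4.1} in the role of Lemma~\ref{rep} and with the level-$(n+1)$ decompositions
$$A_{n+1}=A_nB_nA_nB_n,\qquad B_{n+1}=B_nB_nB_nA_n$$
(which come from $\tilde\eta\colon a\mapsto abab,\ b\mapsto bbba$) replacing the ones used for $\theta$. Fix $n_0$. Decompose $x$ and $T^ky$ into $n_0$-blocks $\{A_{n_0},B_{n_0}\}$, where $k$ is chosen with $|k|\le\tfrac12 l_{n_0}+1$ so that the $n_0$-block of $x$ containing $x_0$ and the $n_0$-block of $T^ky$ containing $(T^ky)_0$ begin at the same place --- this is possible because the two $n_0$-block lengths $l_{n_0}$ and $l_{n_0}-1$ differ only by $1$. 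Since $x$ and $y$ are not on one orbit, the two block sequences $(D_i^{n_0})$ and $(E_i^{n_0})$ disagree somewhere; pick an index $s_{n_0}$ of least absolute value with $D_{s_{n_0}}^{n_0}\ne E_{s_{n_0}}^{n_0}$. If $|s_{n_0}|>m$, pass to the $(n_0+1)$-block decomposition: by uniqueness of the decomposition the $(n_0+1)$-blocks of $x$ and of $T^ky$ meeting the position of the discrepant $n_0$-block are themselves distinct, and since the block lengths are increasing the index of this new discrepancy has strictly smaller absolute value. Iterating, let $n\ge n_0$ be least with $|s_n|\le m$.

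At this level the discrepant $n$-block lies within $m+1$ blocks of the origin, so its position $p$ satisfies $|p|\le (m+1)l_n$, and --- exactly as in items (a)--(c) of the proof of Lemma~\ref{orbit} --- tracking how $k$ and a shift back to the beginning of the relevant two- or three-block window enter yields the required bounds $|m_i|\le (m+3)l_{n+1}$ and $|m_1-m_2|\le\tfrac12 l_n+2$, with a bounded correction according to whether $p$ is to the left or right of the origin. The one genuinely new point, as compared with the $\theta$ case, is that $A_n$ and $B_n$ now share neither a common prefix nor a common suffix --- in fact they terminate in different letters --- so there is no common tail $C_n$ to peel off, and instead one must read the local pattern of $n$-blocks straight off the recursions. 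The $(n+1)$-block of $x$ through $p$ and that of $y$ through $p$ are the two words $A_{n+1}=A_nB_nA_nB_n$ and $B_{n+1}=B_nB_nB_nA_n$ in some order; comparing them at the admissible relative offsets forces the list of $n$-blocks standing opposite one another to be, up to interchanging $x$ and $y$, one of
$$A_nB_nA_n \ \text{vs.}\ B_nB_nB_n,\qquad B_nA_nB_n\ \text{vs.}\ B_nB_nB_n,\qquad A_nA_n\ \text{vs.}\ B_nB_n,$$
and one then takes $m_1,m_2$ to be the starting positions of these windows in $x$ and $y$ respectively (which may legitimately differ, whence the bound on $|m_1-m_2|$).

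The step I expect to be the main obstacle is the last one: verifying that only the three configurations (i)--(iii) can occur. Concretely one locates the discrepant $n$-block inside its enclosing $(n+1)$-block in $x$ and in $y$, uses that in each of the two tilings the sequence of $(n+1)$-blocks is again a point of $X_{\tilde\eta}$ (so that only the legal neighbourings of $A_{n+1}$ and $B_{n+1}$, together with their internal four-block structure, are available), and runs through the finitely many admissible ways an occurrence of $A_{n+1}$ can sit against an occurrence of $B_{n+1}$; the rigidity of $abab$ against $bbba$ leaves precisely the three windows above. Two routine side issues are dispatched by the same bookkeeping as in the $\theta$ argument: the degenerate case $s_n=0$, where the discrepancy already sits at the origin block, and the sign case $s_n<0$, each of which moves the constants only by a bounded amount.
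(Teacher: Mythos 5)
Your proposal follows the paper's proof: align the $(n+1)$-block decompositions of $x$ and $T^{k}y$, locate the nearest discrepant pair of $(n+1)$-blocks within $m$ blocks of the origin, and then read one of the three configurations off the recursions $A_{n+1}=A_nB_nA_nB_n$, $B_{n+1}=B_nB_nB_nA_n$, adjusting $k$ by $\pm 1$ when $s_{n+1}<0$. The only caveat is that the step you defer --- ``running through the finitely many admissible ways $A_{n+1}$ can sit against $B_{n+1}$'' --- is in fact the bulk of the paper's argument: it is a five-way case split on the offset $k$ in eighths of $l_{n+1}$, with further sub-cases according to whether the $(n+1)$-block preceding the discrepant one in $x$ (resp.\ $y$) is $A_{n+1}$ or $B_{n+1}$, and it is precisely this split that yields the constants $(m+3)l_{n+1}$ and $\frac{1}{2}l_n+2$; your remark that the bookkeeping goes ``exactly as in'' the $\theta$ case undersells this, since there the common suffix $C_n$ collapses the analysis to two cases, whereas here (as you correctly note) no common prefix or suffix is available.
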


\begin{proof}
Given $n_0 \in \mathbb{N}$, $x$ and $y$ can be expressed in terms of $A_{n_0+1}=\eta^{n_0+1}(00)$ and $B_{n_0+1}=\eta^{n_0+1}(1)$.
Then we can find an integer $\tilde{k}$ ($|\tilde{k}| \leq \frac{1}{2} l_{n_0+1}$) such that the block containing $x_0$ begins at the same place of the  block containing $(T^{\tilde{k}} y)_0$. Introduce the sequence of $(n_0+1)$-blocks $(D_i^{n_0+1})_{i \in \mathbb{Z}}$ such that $x= \cdots D_{-1}^{n_0+1} D_0^{n_0+1} D_1^{n_0+1} \cdots$, where $D_i^{n_0+1}$ is $A_{n_0+1}$ or $B_{n_0+1}$ and $x_0$ belongs to $D_0^{n_0+1}$. Let $\delta_i$ be the position where $D_i^{n_0+1}$ occurs in $x$. Similarly, write $T^{\tilde{k}}y$ in terms of the $(n_0+1)$-blocks $(E_i^{n_0+1})_{i \in \mathbb{Z}}$ and let $\epsilon_i$ denote the position where $E_i^{n_0+1}$ occurs in $T^{\tilde{k}} y$. Since $x$ and $y$ are not in the same orbit, there exists $s$ such that $D_s^{n_0+1}$ and $E_s^{n_0+1}$ are different. We choose $s_{n_0+1}$ such that $|s_{n_0+1}|$ is minimal for such $s$. 
If $|s_{n_0+1}| > m+1$, we express $x$ and $T^{\tilde{k}}y$ as sequences of $(D_i^{n_0+2})$ and $(E_i^{n_0+2})$. In this case, $|s_{n_0+2}| < |s_{n_0+1}|$. Hence we can find an integer $n \geq n_0$ such that $|s_{n+1}| \leq m$. 

Let $k = \tilde{k}$ if  $s_{n +1} \geq 0$. If $s_{n +1} < 0$, then choose  $k = \tilde{k} \pm 1$ such that the two different $(n+1)$-blocks in $x$ and $T^ky$ occur at the same place.  
Hence, one of the following holds:
\begin{enumerate}
\item  $A_{n+1}$ occurs in $x$ and $B_{n+1}$ occurs in $T^k y$ at the same place $i$ with $|i|\leq (m+1) l_{n+1}$, 
\item  $B_{n+1}$ occurs in $x$ and $A_{n+1}$ occurs in $T^k y$ at the same place $i$ with $|i|\leq (m+1) l_{n+1}$.
\end{enumerate}

Without loss of generality we can assume that (1) holds. Recall that $A_{n+1} = A_n B_n A_n B_n$,  $B_{n+1} = B_n B_n B_n A_n$, and $l_{n+1} = 4 l_n -2$ (see figure 3.)

\setlength{\unitlength}{.4in}
\begin{picture}(5,1)(0,0)
\label{fig 1}
\linethickness{1pt}
\put(0,0){\line(1,0){15}}
\put(-.5, 0){\makebox(0,0){$x:$}}
\put(2,-.3){\makebox(0,0){$x_0$}}
\put(7,-.1){\line(0,1){0.2}}
\put(7,-.3){\makebox(0,0){$x_{i}$}}
\put(9.1,-.5){\makebox(0,0){$A_{n+1}$}}
\put(11.2,-.1){\line(0,1){0.2}}
\put(8.1,-.1){\line(0,1){0.2}}
\put(9.1,-.1){\line(0,1){0.2}}
\put(10.2,-.1){\line(0,1){0.2}}
\put(7.5,.3){\makebox(0,0){$A_{n}$}}
\put(8.5,.3){\makebox(0,0){$B_{n}$}}
\put(9.6,.3){\makebox(0,0){$A_{n}$}}
\put(10.6,.3){\makebox(0,0){$B_{n}$}}

\put(0,-2){\line(1,0){15}}
\put(-.7, -2){\makebox(0,0){$T^ky:$}}
\put(2,-2.4){\makebox(0,0){$(T^ky)_0$}}
\put(9,-2.4){\makebox(0,0){$B_{n+1}$}}
\put(11.1,-2.1){\line(0,1){0.2}}
\put(7,-2.1){\line(0,1){0.2}}
\put(8.0,-2.1){\line(0,1){0.2}}
\put(9.0,-2.1){\line(0,1){0.2}}
\put(10.0,-2.1){\line(0,1){0.2}}
\put(7.5,-1.7){\makebox(0,0){$B_{n}$}}
\put(8.5,-1.7){\makebox(0,0){$B_{n}$}}
\put(9.6,-1.7){\makebox(0,0){$B_{n}$}}
\put(10.6,-1.7){\makebox(0,0){$A_{n}$}}
\put(7,-2.4){\makebox(0,0){$(T^ky)_{i}$}}

\put(7, -3.0){\makebox(0,0){Figure 3}} 
\end{picture}

\vspace{3.5cm}

Let us consider the following 5 cases for $|k| \leq \frac{1}{2}l_{n+1} +1$:

\begin{enumerate}[{(a)}]
\item $-\frac{1}{8} l_{n+1} \leq k < \frac{1}{8} l_{n+1}$ ($\Rightarrow |k| \leq \frac{1}{8} l_{n+1}  \leq \frac{1}{2}l_n $): Then $A_nB_nA_n$ occurs at  $m_1 = i$ in $x$ and $B_nB_nB_n$ occurs at $m_2 = i -k$ in $y$ (see figure 4). We have $|m_1| \leq (|s_{n+1}| +1)l_{n+1} \leq (m+1)l_{n+1}$, $|m_2| \leq (|s_{n+1}| +1)l_{n+1} +|k| \leq (m+1) l_{n+1} + \frac{1}{8} l_{n+1} \leq (m+2)l_{n+1}$, and $|m_1 - m_2| = |k| \leq \frac{1}{2}l_n$, so $(i)$ holds.

\setlength{\unitlength}{.4in}
\begin{picture}(5,1)(0,0)
\label{fig 1}
\linethickness{1pt}
\put(0,0){\line(1,0){14}}
\put(-.5, 0){\makebox(0,0){$x:$}}
\put(2,-.3){\makebox(0,0){$x_0$}}
\put(7,-.1){\line(0,1){0.2}}
\put(7,-.3){\makebox(0,0){$x_{m_1}$}}
\put(11.2,-.1){\line(0,1){0.2}}
\put(8.1,-.1){\line(0,1){0.2}}
\put(9.1,-.1){\line(0,1){0.2}}
\put(10.2,-.1){\line(0,1){0.2}}
\put(7.6,.3){\makebox(0,0){$A_{n}$}}
\put(8.5,.3){\makebox(0,0){$B_{n}$}}
\put(9.6,.3){\makebox(0,0){$A_{n}$}}
\put(10.6,.3){\makebox(0,0){$B_{n}$}}

\put(0,-1.5){\line(1,0){14}}
\put(-.5, -1.5){\makebox(0,0){$y:$}}
\put(2,-1.2){\makebox(0,0){$y_0$}}
\put(6.7,-1.6){\line(0,1){0.2}}
\put(7.7,-1.6){\line(0,1){0.2}}
\put(8.7,-1.6){\line(0,1){0.2}}
\put(9.7,-1.6){\line(0,1){0.2}}
\put(10.8,-1.6){\line(0,1){0.2}}
\put(7.3,-1.8){\makebox(0,0){$B_{n}$}}
\put(8.2,-1.8){\makebox(0,0){$B_{n}$}}
\put(9.2,-1.8){\makebox(0,0){$B_{n}$}}
\put(10.3,-1.8){\makebox(0,0){$A_{n}$}}
\put(6.7,-1.2){\makebox(0,0){$y_{m_2}$}}

\put(6.7, -2.7){\makebox(0,0){Figure 4}} 
\end{picture}
\vspace{3.5cm}

\item $-\frac{3}{8} l_{n+1} \leq k < -\frac{1}{8} l_{n+1}$ ($\Rightarrow -\frac{3}{2}l_n + \frac{3}{4} \leq k < -\frac{1}{2}l_n + \frac{1}{4}$): Then $B_nA_nB_n$ occurs at $m_1=i+l_n$ in $x$ and $B_nB_nB_n$ occurs at $m_2=i-k$ in $y$ (see figure 5). We have $|m_1| \leq (m+1)l_{n+1} + l_{n}  \leq (m+2)l_{n+1}$ and $|m_2| \leq (m+1) l_{n+1}  + \frac{3}{8} l_{n+1} \leq (m+2)l_{n+1}$. Also, $m_1 - m_2 = l_n + k$ and $-\frac{1}{2}l_n + \frac{3}{4} \leq l_n + k < \frac{1}{2}l_n + \frac{1}{4}$, so $|m_1 - m_2| \leq \frac{1}{2}l_n+1$, so $(ii)$ holds.

\setlength{\unitlength}{.4in}
\begin{picture}(5,1)(0,0)
\label{fig 1}
\linethickness{1pt}
\put(0,0){\line(1,0){14}}
\put(-.5, 0){\makebox(0,0){$x:$}}
\put(2,-.3){\makebox(0,0){$x_0$}}
\put(7,-.1){\line(0,1){0.2}}
\put(8.1,-.3){\makebox(0,0){$x_{m_1}$}}
\put(8.1,-.1){\line(0,1){0.2}}
\put(9.1,-.1){\line(0,1){0.2}}
\put(10.2,-.1){\line(0,1){0.2}}
\put(11.2,-.1){\line(0,1){0.2}}
\put(7.6,.3){\makebox(0,0){$A_{n}$}}
\put(8.5,.3){\makebox(0,0){$B_{n}$}}
\put(9.6,.3){\makebox(0,0){$A_{n}$}}
\put(10.6,.3){\makebox(0,0){$B_{n}$}}

\put(0,-1.5){\line(1,0){14}}
\put(-.5, -1.5){\makebox(0,0){$y:$}}
\put(2,-1.2){\makebox(0,0){$y_0$}}
\put(8.3,-1.6){\line(0,1){0.2}}
\put(9.3,-1.6){\line(0,1){0.2}}
\put(10.3,-1.6){\line(0,1){0.2}}
\put(11.3,-1.6){\line(0,1){0.2}}
\put(12.4,-1.6){\line(0,1){0.2}}
\put(8.8,-1.8){\makebox(0,0){$B_{n}$}}
\put(9.8,-1.8){\makebox(0,0){$B_{n}$}}
\put(10.8,-1.8){\makebox(0,0){$B_{n}$}}
\put(11.9,-1.8){\makebox(0,0){$A_{n}$}}
\put(8.3,-1.2){\makebox(0,0){$y_{m_2}$}}

\put(6.7, -2.7){\makebox(0,0){Figure 5}} 
\end{picture}
\vspace{3.5cm}

\item $-\frac{1}{2} l_{n+1} -1 \leq k < -\frac{3}{8} l_{n+1}$ ($\Rightarrow -2 l_n  \leq k < -\frac{3}{2}l_n + \frac{3}{4}$): Then $B_{n+1} = B_n B_n B_n A_n$ occurs at $i -k$ in $y$ (see figure 6 (i)). 

\setlength{\unitlength}{.4in}
\begin{picture}(5,1)(0,0)
\label{fig 1}
\linethickness{1pt}
\put(0,0){\line(1,0){14}}
\put(-.5, 0){\makebox(0,0){$x:$}}
\put(2,-.3){\makebox(0,0){$x_0$}}
\put(7,-.1){\line(0,1){0.2}}
\put(7,-.3){\makebox(0,0){$x_{i}$}}
\put(8.1,-.1){\line(0,1){0.2}}
\put(9.1,-.1){\line(0,1){0.2}}
\put(10.2,-.1){\line(0,1){0.2}}
\put(11.2,-.1){\line(0,1){0.2}}
\put(7.6,.3){\makebox(0,0){$A_{n}$}}
\put(8.5,.3){\makebox(0,0){$B_{n}$}}
\put(9.6,.3){\makebox(0,0){$A_{n}$}}
\put(10.6,.3){\makebox(0,0){$B_{n}$}}

\put(0,-1.5){\line(1,0){14}}
\put(-.5, -1.5){\makebox(0,0){$y:$}}
\put(2,-1.2){\makebox(0,0){$y_0$}}
\put(8.9,-1.6){\line(0,1){0.2}}
\put(9.9,-1.6){\line(0,1){0.2}}
\put(10.9,-1.6){\line(0,1){0.2}}
\put(11.9,-1.6){\line(0,1){0.2}}
\put(13.0,-1.6){\line(0,1){0.2}}
\put(9.4,-1.8){\makebox(0,0){$B_{n}$}}
\put(10.4,-1.8){\makebox(0,0){$B_{n}$}}
\put(11.4,-1.8){\makebox(0,0){$B_{n}$}}
\put(12.5,-1.8){\makebox(0,0){$A_{n}$}}
\put(8.9,-1.2){\makebox(0,0){$y_{i-k}$}}

\put(6.7, -2.7){\makebox(0,0){Figure 6 (i)}} 
\end{picture}
\vspace{3.5cm}

Now consider the blocks occurring before the block $B_{n+1} = B_n B_n B_n A_n$ at $i-k$ in $y$. There are two possibilities: either $A_{n+1}=A_nB_nA_nB_n$ occurs right before $B_{n+1}$ or $B_{n+1}=B_nB_nB_nA_n$ occurs right before $B_{n+1}$ in $y$.

For the first case, $B_n$ occurs at $m_2 = i - k - l_n +1$ in $y$. Note that $B_nA_nB_n$ occurs at $m_1 = i+l_n$ in $x$ and $B_nB_nB_n$ occurs at $m_2 = i- k - l_n+1$ in $y$ (see figure 6 (ii)). Then $|m_1| \leq (m+1)l_{n+1} + l_n\leq (m+2)l_{n+1}$ and $|m_2| \leq (m+1)l_{n+1} + (\frac{1}{2}l_{n+1} +1) + l_n +1 \leq (m+2)l_{n+1}$. Also $m_1 -m_2 = k + 2l_n -1$. Then $-1 \leq k + 2 l_n -1 < \frac{1}{2}l_n - \frac{1}{4}$, so $|m_1 - m_2| \leq \frac{1}{2}l_n+2$, so $(iii)$ holds.

\setlength{\unitlength}{.4in}
\begin{picture}(5,1)(0,0)
\label{fig 1}
\linethickness{1pt}
\put(0,0){\line(1,0){14}}
\put(-.5, 0){\makebox(0,0){$x:$}}
\put(2,-.3){\makebox(0,0){$x_0$}}
\put(7,-.3){\makebox(0,0){$x_{i}$}}
\put(7,-.1){\line(0,1){0.2}}
\put(8.1,-.3){\makebox(0,0){$x_{m_1}$}}
\put(8.1,-.1){\line(0,1){0.2}}
\put(9.1,-.1){\line(0,1){0.2}}
\put(10.2,-.1){\line(0,1){0.2}}
\put(11.2,-.1){\line(0,1){0.2}}
\put(7.6,.3){\makebox(0,0){$A_{n}$}}
\put(8.5,.3){\makebox(0,0){$B_{n}$}}
\put(9.6,.3){\makebox(0,0){$A_{n}$}}
\put(10.6,.3){\makebox(0,0){$B_{n}$}}

\put(0,-1.5){\line(1,0){14}}
\put(-.5, -1.5){\makebox(0,0){$y:$}}
\put(2,-1.2){\makebox(0,0){$y_0$}}
\put(7.9,-1.6){\line(0,1){0.2}}
\put(8.9,-1.6){\line(0,1){0.2}}
\put(9.9,-1.6){\line(0,1){0.2}}
\put(10.9,-1.6){\line(0,1){0.2}}
\put(11.9,-1.6){\line(0,1){0.2}}
\put(13.0,-1.6){\line(0,1){0.2}}
\put(8.4,-1.8){\makebox(0,0){$B_{n}$}}
\put(9.4,-1.8){\makebox(0,0){$B_{n}$}}
\put(10.4,-1.8){\makebox(0,0){$B_{n}$}}
\put(11.4,-1.8){\makebox(0,0){$B_{n}$}}
\put(12.5,-1.8){\makebox(0,0){$A_{n}$}}
\put(7.9,-1.2){\makebox(0,0){$y_{m_2}$}}
\put(8.9,-1.2){\makebox(0,0){$y_{i-k}$}}

\put(6.7, -2.7){\makebox(0,0){Figure 6 (ii)}} 
\end{picture}
\vspace{3.5cm}

 Otherwise $B_nB_nB_nA_n$ occurs at $i - k - 4l_n -1$ of $y$. Let us consider the blocks occurring before $A_{n+1}=A_nB_nA_nB_n$ at $i$ in $x$; either $B_{n+1} = B_n B_n B_n A_n$ or $A_{n+1}=A_n B_n A_n B_n$ occurs before $A_{n+1}$ at $i$ in $x$. 
 
 For the first case $A_n$ occurs at $i-l_n$ of $x$. Then $A_nA_n$ occurs at $m_1=i-l_n$ in $x$ and $B_nB_n$ occurs at $m_2=i-k- 3l_n + 2$ of $y$ (see figure 6 (iii)). We have $|m_1| \leq (m+1)l_{n+1} + l_n \leq (m+2)l_{n+1}$ and $|m_2| \leq (m+1)l_{n+1} + (\frac{1}{2}l_{n+1} +1) + 3l_n +2 \leq (m+3)l_{n+1}$. Also $m_1-m_2 = k+ 2 l_n -2$, so $|m_1 - m_2| \leq \frac{1}{2}l_n +2$, so $(iii)$ holds. 
 
 \setlength{\unitlength}{.4in}
\begin{picture}(5,1)(0,0)
\label{fig 1}
\linethickness{1pt}
\put(0,0){\line(1,0){14}}
\put(-.5, 0){\makebox(0,0){$x:$}}
\put(2,-.3){\makebox(0,0){$x_0$}}
\put(5.9,-.3){\makebox(0,0){$x_{m_1}$}}
\put(6,-.1){\line(0,1){0.2}}
\put(7.1,-.1){\line(0,1){0.2}}
\put(7.1,-.3){\makebox(0,0){$x_{i}$}}
\put(8.1,-.1){\line(0,1){0.2}}
\put(9.1,-.1){\line(0,1){0.2}}
\put(10.2,-.1){\line(0,1){0.2}}
\put(11.2,-.1){\line(0,1){0.2}}
\put(6.6,.3){\makebox(0,0){$A_n$}}
\put(7.6,.3){\makebox(0,0){$A_{n}$}}
\put(8.5,.3){\makebox(0,0){$B_{n}$}}
\put(9.6,.3){\makebox(0,0){$A_{n}$}}
\put(10.6,.3){\makebox(0,0){$B_{n}$}}

\put(0,-1.5){\line(1,0){14}}
\put(-.5, -1.5){\makebox(0,0){$y:$}}
\put(2,-1.2){\makebox(0,0){$y_0$}}

\put(4.8,-1.6){\line(0,1){0.2}}
\put(5.8,-1.6){\line(0,1){0.2}}
\put(6.8,-1.6){\line(0,1){0.2}}
\put(7.8,-1.6){\line(0,1){0.2}}
\put(5.4,-1.8){\makebox(0,0){$B_{n}$}}
\put(6.4,-1.8){\makebox(0,0){$B_{n}$}}
\put(7.4,-1.8){\makebox(0,0){$B_{n}$}}
\put(8.5,-1.8){\makebox(0,0){$A_{n}$}}

\put(8.9,-1.6){\line(0,1){0.2}}
\put(9.9,-1.6){\line(0,1){0.2}}
\put(10.9,-1.6){\line(0,1){0.2}}
\put(11.9,-1.6){\line(0,1){0.2}}
\put(13.0,-1.6){\line(0,1){0.2}}
\put(9.4,-1.8){\makebox(0,0){$B_{n}$}}
\put(10.4,-1.8){\makebox(0,0){$B_{n}$}}
\put(11.4,-1.8){\makebox(0,0){$B_{n}$}}
\put(12.5,-1.8){\makebox(0,0){$A_{n}$}}
\put(8.9,-1.2){\makebox(0,0){$y_{i-k}$}}
\put(5.8,-1.2){\makebox(0,0){$y_{m_2}$}}

\put(6.7, -2.7){\makebox(0,0){Figure 6 (iii)}} 
\end{picture}
\vspace{3.5cm}

  For the latter case  $A_nB_nA_n$ occurs at $m_1 = i-2l_n+1$ in $x$ and $B_nB_nB_n$ occurs at $m_2 = i-k-4l_n+3$ in $y$ (see figure 6 (iv)). We have $|m_1| \leq (m+1)l_{n+1} + 2l_n +1 \leq (m+2)l_{n+1} $ and $|m_2| \leq (m+1)l_{n+1} +\frac{1}{2}l_{n+1}+1 + 4l_n+3\leq (m+3)l_{n+1}$. Also $m_1-m_2 = k+ 2 l_n -2$, so $|m_1 - m_2| \leq \frac{1}{2}l_n +2$, so $(i)$ holds. 

 \setlength{\unitlength}{.4in}
\begin{picture}(5,1)(0,0)
\label{fig 1}
\linethickness{1pt}
\put(0,0){\line(1,0){14}}
\put(-.5, 0){\makebox(0,0){$x:$}}
\put(2,-.3){\makebox(0,0){$x_0$}}
\put(4.9,-.1){\line(0,1){0.2}}
\put(6,-.1){\line(0,1){0.2}}
\put(4.9,-.3){\makebox(0,0){$x_{m_1}$}}
\put(7,-.1){\line(0,1){0.2}}
\put(7.0,-.3){\makebox(0,0){$x_{i}$}}
\put(8.1,-.1){\line(0,1){0.2}}
\put(9.1,-.1){\line(0,1){0.2}}
\put(10.2,-.1){\line(0,1){0.2}}
\put(11.2,-.1){\line(0,1){0.2}}
\put(5.6,.3){\makebox(0,0){$A_n$}}
\put(6.6,.3){\makebox(0,0){$B_n$}}
\put(7.6,.3){\makebox(0,0){$A_{n}$}}
\put(8.5,.3){\makebox(0,0){$B_{n}$}}
\put(9.6,.3){\makebox(0,0){$A_{n}$}}
\put(10.6,.3){\makebox(0,0){$B_{n}$}}

\put(0,-1.5){\line(1,0){14}}
\put(-.5, -1.5){\makebox(0,0){$y:$}}
\put(2,-1.2){\makebox(0,0){$y_0$}}

\put(4.8,-1.6){\line(0,1){0.2}}
\put(5.8,-1.6){\line(0,1){0.2}}
\put(6.8,-1.6){\line(0,1){0.2}}
\put(7.8,-1.6){\line(0,1){0.2}}
\put(5.4,-1.8){\makebox(0,0){$B_{n}$}}
\put(6.4,-1.8){\makebox(0,0){$B_{n}$}}
\put(7.4,-1.8){\makebox(0,0){$B_{n}$}}
\put(8.5,-1.8){\makebox(0,0){$A_{n}$}}

\put(8.9,-1.6){\line(0,1){0.2}}
\put(9.9,-1.6){\line(0,1){0.2}}
\put(10.9,-1.6){\line(0,1){0.2}}
\put(11.9,-1.6){\line(0,1){0.2}}
\put(13.0,-1.6){\line(0,1){0.2}}
\put(9.4,-1.8){\makebox(0,0){$B_{n}$}}
\put(10.4,-1.8){\makebox(0,0){$B_{n}$}}
\put(11.4,-1.8){\makebox(0,0){$B_{n}$}}
\put(12.5,-1.8){\makebox(0,0){$A_{n}$}}
\put(8.9,-1.2){\makebox(0,0){$y_{i-k}$}}
\put(4.8,-1.2){\makebox(0,0){$y_{m_2}$}}

\put(6.7, -2.7){\makebox(0,0){Figure 6 (iv)}} 
\end{picture}
\vspace{3.5cm}

\item  $\frac{1}{8} l_{n+1} \leq k < \frac{3}{8} l_{n+1}$ ( $\Rightarrow \frac{1}{2}l_n - \frac{1}{4} \leq k < \frac{3}{2}l_n - \frac{3}{4}$): 
$A_{n+1} = A_n B_n A_n B_n$ occurs at $i$ in $x$. We have $A_n$ or $B_n$ right before this block $A_{n+1}$ in $x$. For the first case, $A_nA_n$ occurs at $m_1= i- l_n $ in $x$ and $B_nB_n$ occurs at $m_2= i-k$ in $y$ (see the figure 7 (i)). Then $|m_i| \leq (m+2)l_{n+1}$ for $i=1,2$ and $m_2-m_1 = l_n-k$. Since $-\frac{1}{2}l_n + \frac{3}{4} <  l_{n}-k \leq \frac{1}{2}l_n + \frac{1}{4}$, we have $|m_1 -m_2| \leq \frac{1}{2}l_n + 2$, so $(iii)$ holds. 

\setlength{\unitlength}{.4in}
\begin{picture}(5,1)(0,0)
\label{fig 1}
\linethickness{1pt}
\put(0,0){\line(1,0){14}}
\put(-.5, 0){\makebox(0,0){$x:$}}
\put(2,-.3){\makebox(0,0){$x_0$}}
\put(5.9,-.1){\line(0,1){0.2}}
\put(5.9,-.3){\makebox(0,0){$x_{m_1}$}}
\put(7,-.3){\makebox(0,0){$x_{i}$}}
\put(7,-.1){\line(0,1){0.2}}
\put(8.1,-.1){\line(0,1){0.2}}
\put(9.1,-.1){\line(0,1){0.2}}
\put(10.2,-.1){\line(0,1){0.2}}
\put(11.2,-.1){\line(0,1){0.2}}
\put(6.5,.3){\makebox(0,0){$A_{n}$}}
\put(7.6,.3){\makebox(0,0){$A_{n}$}}
\put(8.5,.3){\makebox(0,0){$B_{n}$}}
\put(9.6,.3){\makebox(0,0){$A_{n}$}}
\put(10.6,.3){\makebox(0,0){$B_{n}$}}

\put(0,-1.5){\line(1,0){14}}
\put(-.5, -1.5){\makebox(0,0){$y:$}}
\put(2,-1.2){\makebox(0,0){$y_0$}}
\put(6.3,-1.6){\line(0,1){0.2}}
\put(7.3,-1.6){\line(0,1){0.2}}
\put(8.3,-1.6){\line(0,1){0.2}}
\put(9.3,-1.6){\line(0,1){0.2}}
\put(10.4,-1.6){\line(0,1){0.2}}
\put(6.8,-1.8){\makebox(0,0){$B_{n}$}}
\put(7.8,-1.8){\makebox(0,0){$B_{n}$}}
\put(8.8,-1.8){\makebox(0,0){$B_{n}$}}
\put(9.8,-1.8){\makebox(0,0){$A_{n}$}}
\put(6.3,-1.2){\makebox(0,0){$y_{m_2}$}}

\put(6.7, -2.7){\makebox(0,0){Figure 7 (i)}} 
\end{picture}
\vspace{3.5cm}

For the latter case, $B_nA_nB_n$ occurs at $m_1 = i-l_n+1$ in $x$ and $B_nB_nB_n$ occurs at $m_2= i-k $ in $y$ (see figure 7 (ii)). Similarly, we obtain $|m_i| \leq (m+2)l_{n+1}$ for $i=1,2$ and $|m_1 -m_2| \leq \frac{1}{2}l_n + 2$, so $(ii)$ holds.

\setlength{\unitlength}{.4in}
\begin{picture}(5,1)(0,0)
\label{fig 1}
\linethickness{1pt}
\put(0,0){\line(1,0){14}}
\put(-.5, 0){\makebox(0,0){$x:$}}
\put(2,-.3){\makebox(0,0){$x_0$}}
\put(6.0,-.1){\line(0,1){0.2}}
\put(6.0,-.3){\makebox(0,0){$x_{m_1}$}}
\put(7,-.3){\makebox(0,0){$x_{i}$}}
\put(7,-.1){\line(0,1){0.2}}
\put(8.1,-.1){\line(0,1){0.2}}
\put(9.1,-.1){\line(0,1){0.2}}
\put(10.2,-.1){\line(0,1){0.2}}
\put(11.2,-.1){\line(0,1){0.2}}
\put(6.5,.3){\makebox(0,0){$B_{n}$}}
\put(7.6,.3){\makebox(0,0){$A_{n}$}}
\put(8.5,.3){\makebox(0,0){$B_{n}$}}
\put(9.6,.3){\makebox(0,0){$A_{n}$}}
\put(10.6,.3){\makebox(0,0){$B_{n}$}}

\put(0,-1.5){\line(1,0){14}}
\put(-.5, -1.5){\makebox(0,0){$y:$}}
\put(2,-1.2){\makebox(0,0){$y_0$}}
\put(6.3,-1.6){\line(0,1){0.2}}
\put(7.3,-1.6){\line(0,1){0.2}}
\put(8.3,-1.6){\line(0,1){0.2}}
\put(9.3,-1.6){\line(0,1){0.2}}
\put(10.4,-1.6){\line(0,1){0.2}}
\put(6.8,-1.8){\makebox(0,0){$B_{n}$}}
\put(7.8,-1.8){\makebox(0,0){$B_{n}$}}
\put(8.8,-1.8){\makebox(0,0){$B_{n}$}}
\put(9.8,-1.8){\makebox(0,0){$A_{n}$}}
\put(6.3,-1.2){\makebox(0,0){$y_{m_2}$}}

\put(6.7, -2.7){\makebox(0,0){Figure 7 (ii)}} 
\end{picture}
\vspace{3.5cm}

\item $\frac{3}{8} l_{n+1} \leq k \leq \frac{1}{2} l_{n+1}$ ($\Rightarrow \frac{3}{2}l_n - \frac{3}{4} \leq k \leq 2l_n - 1$): $A_{n+1} = A_n B_n A_n B_n$ occurs at $i$ in $x$. Then either $B_{n+1}$ or $A_{n+1}$ occurs before this $A_{n+1}$.
 For the first case, $A_nA_n$ occurs at $m_1=i-l_n$ in $x$ and $B_nB_n$ occurs at $m_2=i - k + l_n -1$ in $y$ (see figure 8 (i)). Then, we have $|m_i| \leq (m+2)l_{n+1} $ and $m_2-m_1= -k + 2l_n -1$, so $|m_1 -m_2| \leq \frac{1}{2}l_n + 2$, so $(iii)$ holds.

\setlength{\unitlength}{.4in}
\begin{picture}(5,1)(0,0)
\label{fig 1}
\linethickness{1pt}
\put(0,0){\line(1,0){14}}
\put(-.5, 0){\makebox(0,0){$x:$}}
\put(2,-.3){\makebox(0,0){$x_0$}}
\put(5.9,-.1){\line(0,1){0.2}}
\put(7,-.1){\line(0,1){0.2}}
\put(5.9,-.3){\makebox(0,0){$x_{m_1}$}}
\put(7,-.3){\makebox(0,0){$x_{i}$}}
\put(11.2,-.1){\line(0,1){0.2}}
\put(8.1,-.1){\line(0,1){0.2}}
\put(9.1,-.1){\line(0,1){0.2}}
\put(10.2,-.1){\line(0,1){0.2}}
\put(6.5,.3){\makebox(0,0){$A_{n}$}}
\put(7.6,.3){\makebox(0,0){$A_{n}$}}
\put(8.5,.3){\makebox(0,0){$B_{n}$}}
\put(9.6,.3){\makebox(0,0){$A_{n}$}}
\put(10.6,.3){\makebox(0,0){$B_{n}$}}

\put(0,-1.5){\line(1,0){14}}
\put(-.5, -1.5){\makebox(0,0){$y:$}}
\put(2,-1.2){\makebox(0,0){$y_0$}}
\put(5.3,-1.6){\line(0,1){0.2}}
\put(6.3,-1.6){\line(0,1){0.2}}
\put(7.3,-1.6){\line(0,1){0.2}}
\put(8.3,-1.6){\line(0,1){0.2}}
\put(9.4,-1.6){\line(0,1){0.2}}
\put(5.8,-1.8){\makebox(0,0){$B_{n}$}}
\put(6.8,-1.8){\makebox(0,0){$B_{n}$}}
\put(7.8,-1.8){\makebox(0,0){$B_{n}$}}
\put(8.8,-1.8){\makebox(0,0){$A_{n}$}}
\put(5.3,-1.2){\makebox(0,0){$y_{i-k}$}}
\put(6.3,-1.2){\makebox(0,0){$y_{m_2}$}}

\put(6.7, -2.7){\makebox(0,0){Figure 8 (i)}} 
\end{picture}
\vspace{3.5cm}

For the latter case $A_nB_nA_n$ occurs at $m_1= i- 2l_n +1$ in $x$ and $B_nB_nB_n$ occurs at $m_2=i-k$ in $y$ (see figure 8 (ii)). Then $|m_i| \leq (m+2)l_{n+1}$ for $i=1,2$ and $|m_1 -m_2| \leq \frac{1}{2}l_n+2$, so $(i)$ holds.

\setlength{\unitlength}{.4in}

\begin{picture}(5,1)(0,0)
\label{fig 1}
\linethickness{1pt}
\put(0,0){\line(1,0){14}}
\put(-.5, 0){\makebox(0,0){$x:$}}
\put(2,-.3){\makebox(0,0){$x_0$}}
\put(5,-.1){\line(0,1){0.2}}
\put(6,-.1){\line(0,1){0.2}}
\put(7,-.1){\line(0,1){0.2}}
\put(5,-.3){\makebox(0,0){$x_{m_1}$}}
\put(7,-.3){\makebox(0,0){$x_{i}$}}
\put(11.2,-.1){\line(0,1){0.2}}
\put(8.1,-.1){\line(0,1){0.2}}
\put(9.1,-.1){\line(0,1){0.2}}
\put(10.2,-.1){\line(0,1){0.2}}
\put(5.5,.3){\makebox(0,0){$A_{n}$}}
\put(6.5,.3){\makebox(0,0){$B_{n}$}}
\put(7.6,.3){\makebox(0,0){$A_{n}$}}
\put(8.5,.3){\makebox(0,0){$B_{n}$}}
\put(9.6,.3){\makebox(0,0){$A_{n}$}}
\put(10.6,.3){\makebox(0,0){$B_{n}$}}

\put(0,-1.5){\line(1,0){14}}
\put(-.5, -1.5){\makebox(0,0){$y:$}}
\put(2,-1.2){\makebox(0,0){$y_0$}}
\put(5.3,-1.6){\line(0,1){0.2}}
\put(6.3,-1.6){\line(0,1){0.2}}
\put(7.3,-1.6){\line(0,1){0.2}}
\put(8.3,-1.6){\line(0,1){0.2}}
\put(9.4,-1.6){\line(0,1){0.2}}
\put(5.8,-1.8){\makebox(0,0){$B_{n}$}}
\put(6.8,-1.8){\makebox(0,0){$B_{n}$}}
\put(7.8,-1.8){\makebox(0,0){$B_{n}$}}
\put(8.8,-1.8){\makebox(0,0){$A_{n}$}}
\put(5.3,-1.2){\makebox(0,0){$y_{m_2}$}}

\put(6.7, -2.7){\makebox(0,0){Figure 8 (ii)}} 
\end{picture}

\vspace{2cm}
\qedhere
\end{enumerate} 
\end{proof}

\begin{proof}[Proof of Theorem \ref{eta}]
(a)  The proof is similar to the proof of Theorem \ref{msj theorem}. It is enough to show that $(X_{\eta}, \mathcal{X}, \mu, T)$ has minimal self-joinings of order 2. Let $\overline{\mu}$ be a twofold ergodic joining of $X_{\eta}$ and let $(x,y)$ be a $\overline{\mu}$-generic point. If $x$ and $y$ are on the same orbit under $T$, then $\overline{\mu}$ is off-diagonal.

Otherwise there exists an increasing sequence $(n_k)$ such that at least one of the conclusions $(i) -(iii)$ of Lemma \ref{orbit2} occurs. 
Suppose that the conclusion of $(i)$ of Lemma \ref{orbit2} occurs for a sequence $(n_k)$. We assume that $A_{n_k}B_{n_k}A_{n_k}$ occurs at $x_{m_1}$  and $B_{n_k}B_{n_k}B_{n_k}$ occurs at $y_{m_2}$ (see figure 9).

\setlength{\unitlength}{.4in}

\begin{picture}(5,4)(0,-2)
\label{fig 1}
\linethickness{1pt}
\put(0,0){\line(1,0){14}}
\put(-.5, 0){\makebox(0,0){$x:$}}
\put(2,.3){\makebox(0,0){$x_0$}}
\put(5,-.1){\line(0,1){0.2}}
\put(7.2,-.1){\line(0,1){0.2}}
\put(9.2,-.1){\line(0,1){0.2}}
\put(5,.3){\makebox(0,0){$x_{m_1}$}}
\put(11.4,-.1){\line(0,1){0.2}}
\put(6.0,.3){\makebox(0,0){$A_{n_k}$}}
\put(8.1,.3){\makebox(0,0){$B_{n_k}$}}
\put(10.2,.3){\makebox(0,0){$A_{n_k}$}}

\put(6.2,-0.6){\makebox(0,0){$M_k$}}
\put(5.5,-.2){\line(0,1){.2}}
\put(7.2,-.2){\line(0,1){.2}}
\put(5.5,-.2){\line(1,0){1.7}}
\put(10.5,-0.6){\makebox(0,0){$t_k +1+ M_k$}}
\put(9.7,-.2){\line(0,1){.2}}
\put(11.4,-.2){\line(0,1){.2}}
\put(9.7,-.2){\line(1,0){1.7}}

\put(0,-2){\line(1,0){14}}
\put(-.5, -2){\makebox(0,0){$y:$}}
\put(2,-2.3){\makebox(0,0){$y_0$}}
\put(5.5,-2.1){\line(0,1){0.2}}
\put(7.5,-2.1){\line(0,1){0.2}}

\put(9.5,-2.1){\line(0,1){0.2}}
\put(11.5,-2.1){\line(0,1){0.2}}
\put(6.5,-2.3){\makebox(0,0){$B_{n_k}$}}
\put(8.5,-2.3){\makebox(0,0){$B_{n_k}$}}
\put(10.5,-2.3){\makebox(0,0){$B_{n_k}$}}
\put(5.3,-2.3){\makebox(0,0){$y_{m_2}$}}

\put(6.2,-1.5){\makebox(0,0){$M_k$}}
\put(5.5,-2){\line(0,1){.2}}
\put(7.2,-2){\line(0,1){.2}}
\put(5.5,-1.8){\line(1,0){1.7}}
\put(10.3,-1.5){\makebox(0,0){$t_k + M_k$}}
\put(9.5,-2){\line(0,1){.2}}
\put(11.2,-2){\line(0,1){.2}}
\put(9.5,-1.8){\line(1,0){1.7}}

\put(6.7, -3.2){\makebox(0,0){Figure 9}} 
\end{picture}
\vspace{3.5cm}

Let $L_k=[-(m+4)l_{n_k+1}, (m+4)l_{n_k+1}]$ and let $M_k$ be the interval where the first $A_{n_k}$ of $A_{n_k}B_{n_k}A_{n_k}$ and the first $B_{n_k}$ of $B_{n_k}B_{n_k}B_{n_k}$ overlap. Choose $t_k = 2l_{n_k} - 2$. Applying the argument in Theorem \ref{msj theorem} with Lemma \ref{msj lemma}, we have that $\overline{\mu}$ is $T \times I$ invariant, so $\overline{\mu} = \mu \times \mu$. 
If one of the conclusions ($ii$) and ($iii$) of Lemma \ref{orbit2} holds for a sequence $(n_k)$, choose $t_k$
as ($ii$) $2l_{n_k} - 2$, or ($iii$) ${l_{n_k}}-1$ according the conclusion of Lemma \ref{orbit2} and apply Lemma \ref{msj lemma}, we have that $\overline{\mu}$ is $T \times I$ invariant or $I \times T$ invariant, so $\overline{\mu} = \mu \times \mu$.

(b) The proof of (b) follows along the lines of the proof of Theorem \ref{msj tiling}, combined with Lemma \ref{orbit2}, and is omitted.   
  \end{proof}

\section{rigidity and weak mixing}
In this section we construct an example of a tiling dynamical system which is rigid and weakly mixing.
Recall that $(X, \mathcal{X}, \mu, T_G)$ is said to be rigid if  there is a sequence $(t_i)$ in $G$ with $t_i \rightarrow \infty$ such that for every $f \in L^2(X)$, $f \circ T_{t_i} $ converges to $f$ in $L^2(X)$, which is equivalent to the fact that $\mu(A \cap T_{t_i}A) \rightarrow \mu(A)$ for any $A \in \mathcal{X}$. 

Let us consider first the following subshift, which was investigated in \cite{JR}.
A sequence of blocks $B_n$ is defined as follows:
$$B_1 = 010, B_{n+1} = B_n^{2^n} 1 B_n^{2^n}.$$
Let $X = \{ x=(x_n) \in \{0,1\}^{\mathbb{Z}} : \, \textrm{for all} \,\, i < j, \, x_i x_{i+1} \cdots x_j \,\, \textrm{occurs in some} \,\, B_k \}$.
Then $X$ is a closed shift invariant subset of $\{0,1\}^{\mathbb{Z}}$. It is known that $(X,T)$ is minimal and uniquely ergodic. Moreover, equipped with the shift invariant probability measure $\mu$ on the Borel $\sigma$-algebra $\mathcal{X}$, the measure preserving system $(X,\mathcal{X}, \mu,T)$ is rigid and weakly mixing (see \cite{JR}.)

Now let us consider a tiling $\mathcal{T}$ arising from a sequence $x=(x_n)$ in the space $X$ and two intervals $J_0$ and $J_1$. Note that the tiling dynamical system, which is denoted by $\bold{X} = (X_{\mathcal{T}}, \mathcal{D}, \nu, (T_t)_{(t \in \mathbb{R})})$, is minimal and uniquely ergodic since $(X,T)$ is minimal and uniquely ergodic.  

\begin{Theorem}
\label{rigid}
Let  $\bold{X} = (X_{\mathcal{T}}, \mathcal{D}, \nu, (T_t)_{(t \in \mathbb{R})})$ be a tiling dynamical system arising from the subshift $(X,T)$ and two intervals $J_0$ and $J_1$. If $ \frac{|J_0|}{|J_1|}$ is irrational, then the tiling dynamical system $\bold{X}$ is rigid and weakly mixing.
\end{Theorem}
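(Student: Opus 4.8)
The plan is to transport the rigidity of the base subshift $(X,\mathcal{X},\mu,T)$ up to the suspension flow $\bold{X} = (X_{\mathcal{T}}, \mathcal{D}, \nu, (T_t)_{t\in\mathbb{R}})$, and to check weak mixing separately via the non-existence of eigenfunctions. First I would fix the description of $\bold{X}$ as the flow built under the roof function $f: X \to \mathbb{R}$ with $f(x) = |J_0|$ on $[0]$ and $f(x) = |J_1|$ on $[1]$, exactly as recalled in Section 3.2; this identifies $X_{\mathcal{T}}$ with $\{(x,s) : x \in X,\ 0 \le s < f(x)\}$ and $\nu$ with (the normalization of) $\mu \times \mathrm{Leb}$. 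The key observation is that rigidity of the base gives a sequence $n_i \to \infty$ with $\mu(A \cap T^{n_i} A) \to \mu(A)$ for all $A \in \mathcal{X}$, and along such a sequence the Birkhoff sums $S_{n_i}f(x) = \sum_{k=0}^{n_i-1} f(T^k x)$ of the roof function concentrate: since $f$ takes only the two values $|J_0|, |J_1|$, one has $S_{n_i}f(x) = |J_0|\,p_{n_i}(x) + |J_1|\,q_{n_i}(x)$ where $p_{n_i}(x) + q_{n_i}(x) = n_i$ counts the $0$'s and $1$'s in $x_0 \cdots x_{n_i-1}$. By unique ergodicity, $S_{n_i}f(x)/n_i \to c := \mu([0])|J_0| + \mu([1])|J_1|$ uniformly, and more importantly one can choose the rigidity times $n_i$ coming from the block structure $B_{n+1} = B_n^{2^n} 1 B_n^{2^n}$ so that the fluctuation $S_{n_i}f(x) - a_i$ is controlled uniformly in $x$ for a suitable scalar $a_i$.

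The heart of the argument is then: for the flow, I want times $\tau_i \to \infty$ in $\mathbb{R}$ with $\nu(\mathcal{U} \cap T_{\tau_i}\mathcal{U}) \to \nu(\mathcal{U})$ for every $\mathcal{U}$ in a generating family of cylinder sets of $X_{\mathcal{T}}$. A cylinder set in $X_{\mathcal{T}}$ has the form $[u] \times I$ for an admissible word $u$ and an interval $I$; under the suspension picture, flowing by $S_n f(x)$ (when $x \in [u]$ for a block $u$ long enough to determine the first $n$ symbols) returns the point $(x,s)$ to $(T^n x, s)$ up to the small roof-sum discrepancy. So I would set $\tau_i = a_i$ where $a_i$ is the near-constant value of $S_{n_i}f$ on the relevant cylinders, and estimate $\nu([u]\times I \ \triangle\ T_{\tau_i}([u]\times I))$ by splitting into the base contribution $\mu([u] \ \triangle\ T^{-n_i}[u])$, which is small by base rigidity, plus the "fiber slippage" contribution coming from $|S_{n_i}f(x) - a_i|$ being nonzero, which is small because that discrepancy is uniformly $o(1)$ along the chosen block-times $n_i$. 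The crucial point making the discrepancy uniformly small is that within a long $B_{n_i}$-block, all points have essentially the same roof sum over the block's length, since the block structure is self-similar and every occurrence of the block contributes the same number of $0$'s and $1$'s; here is where I expect the main technical work, and the main obstacle: I must verify that the rigidity sequence of $(X,T)$ can be taken to consist of (lengths of) blocks $B_{n}$ or concatenations thereof so that $S_{n_i}f$ is uniformly close to a constant, rather than merely having small $L^2$ or measure-theoretic fluctuation; this is a quantitative strengthening of "rigid" to "uniformly rigid along structured times," which should follow from the explicit combinatorics in \cite{JR} but needs to be extracted carefully.

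Finally, for weak mixing of $\bold{X}$ I would argue that the flow has no nonconstant measurable eigenfunction. If $g \circ T_t = e^{2\pi i \beta t} g$ for some $\beta \ne 0$, then restricting to the base $X \times \{0\}$ and using the return-time cocycle shows that $\tilde{g}(x) := g(x,0)$ satisfies $\tilde{g}(Tx) = e^{2\pi i \beta f(x)} \tilde{g}(x)$, i.e. $e^{2\pi i \beta f}$ is a multiplicative coboundary over $(X,T)$; since $f$ takes the two values $|J_0|, |J_1|$ with irrational ratio, a standard argument (as in \cite{BR}, the Lemma cited there, and \cite{CS} Theorem 2.5) forces $\beta = 0$, using that $(X,T)$ itself is weakly mixing so it has no nonconstant eigenvalues of its own to absorb the obstruction. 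Thus the only eigenfunctions of $(T_t)$ are constants, so $\bold{X}$ is weakly mixing, completing the proof. (Note in particular this gives another instance, alongside the transitive rotation flow mentioned before Theorem \ref{mild R}, showing rigidity and weak mixing are compatible, but here with a free $\mathbb{R}$-action.)
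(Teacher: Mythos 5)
Your rigidity argument is essentially the paper's: both use the base rigidity sequence $h_n=l(B_n)$ from \cite{JR}, take the flow times to be the lengths $t_n$ of the patches corresponding to $B_n$, and control the fiber discrepancy coming from the roof sums; you correctly isolate the one technical point (that $S_{h_n}f$ must be uniformly close to $t_n$ on the relevant sets, which comes from the block structure of \cite{JR}). That half is fine.

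The weak-mixing half has a genuine gap. After reducing to the cocycle equation $\tilde g(Tx)=e^{2\pi i\beta f(x)}\tilde g(x)$, you dispose of it by appealing to ``a standard argument as in \cite{BR} and \cite{CS} Theorem 2.5'' together with weak mixing of the base. But those results concern \emph{constant-length substitution} subshifts, where the obstruction is killed using the substitution matrix and the differing numbers of $0$s in $\zeta(0)$ and $\zeta(1)$; the del Junco--Rudolph subshift generated by $B_{n+1}=B_n^{2^n}1B_n^{2^n}$ is not a substitution subshift, so neither citation applies. Nor is the statement a formal consequence of ``weakly mixing base $+$ two-valued roof with irrational ratio'': whether $e^{2\pi i\beta f}$ is a multiplicative coboundary depends on the combinatorics of the set $[0]$, not just on $\mu([0])$ and the irrationality of $|J_0|/|J_1|$. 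The paper's actual argument supplies exactly the missing combinatorial input: because $B_{n+1}=B_n^{2^n}1B_n^{2^n}$ while $B_{n+1}B_{n+1}$ contains $B_n^{2^n}B_n^{2^n}$ with no spacer, one gets two families of sets $E_n$ and $F_n$, each of measure bounded below by $\tfrac12 d>0$, on which an approximating function $f'$ is invariant under $T_{2^nt_n+\alpha}$ and $T_{2^nt_n}$ respectively (here $\alpha=|J_1|$ after normalizing $|J_0|=1$); comparing the two return times forces $e^{2\pi i\lambda\alpha}=1$, i.e.\ $T_\alpha f=f$. Even then one is not done: the paper must separately prove (Lemma \ref{ergodic}) that the time-$\alpha$ map $T_\alpha$ is \emph{ergodic} on the tiling space in order to conclude that $f$ is constant. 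Your proposal contains neither the $E_n$/$F_n$ double-return-time device nor any substitute for the ergodicity of $T_\alpha$, so as written the weak-mixing claim is not proved.
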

 We will prove this for the case $|J_0| > |J_1|$. The proof for the other case ($|J_0| < |J_1|$) is analogous and will be omitted. In addition we assume that $J_0$ and $J_1$ have length $1$ and $\alpha$, where $\alpha$ is irrational and $0 < \alpha < 1$. We can do so because of the following fact: if $|\tilde{J_0}| = c |J_0|$ and $|\tilde{J_1}| = c |J_1|$ for some $c>0$, then $\lambda$ is an eigenvalue of  a tiling system with two intervals $J_0, J_1$ if and only if $\frac{\lambda}{c}$ is an eigenvalue of a tiling system with two intervals $\tilde{J_0}, \tilde{J_1}$.  
 
 Before proving Theorem \ref{rigid}, let us show the following lemma by an argument similar to that utilized in the proof of Lemma 1 in \cite{JP}.
\begin{Lemma}
\label{ergodic}
 $T_{\alpha}$ is an ergodic transformation on $(X_{{\mathcal{T}}},  {\mathcal{D}}, \nu)$.
 \end{Lemma}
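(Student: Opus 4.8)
\emph{The plan.} I would realise $\bold{X}$ as the flow built under the ceiling function $f\colon X\to\{|J_0|,|J_1|\}=\{1,\alpha\}$, with $f\equiv 1$ on $[0]$ and $f\equiv\alpha$ on $[1]$, over the uniquely ergodic --- hence ergodic --- subshift $(X,T)$; so $\bold{X}$ is itself ergodic. The first step is to reduce ergodicity of $T_\alpha$ to a spectral statement. If $g=\mathbf 1_A$ and $g\circ T_\alpha=g$, then, using $g\circ T_\alpha=g$, the averages $\bar g_k(x):=\int_0^\alpha g(T_sx)\,e^{-2\pi i ks/\alpha}\,ds$ ($k\in\mathbb Z$) satisfy $\bar g_k\circ T_t=e^{2\pi i kt/\alpha}\bar g_k$, i.e.\ each $\bar g_k$ is an eigenfunction of the flow with eigenvalue $k/\alpha$. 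Ergodicity of the flow makes $\bar g_0$ constant, so it is enough to show that the flow has no eigenvalue $N/\alpha$ with $N\ne 0$: then $\bar g_k\equiv 0$ for $k\ne 0$, the $\alpha$-periodic map $s\mapsto g(T_sx)$ has all non-trivial Fourier coefficients zero, hence is a.e.\ constant, and $g$ is constant. (The same mechanism, together with $T_{2^nL_n+\alpha}\to\mathrm{Id}$, also forces every eigenvalue of the flow into $\tfrac1\alpha\mathbb Z$, so this step will in fact yield weak mixing of $\bold{X}$ as well.)

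Next I would extract the obstruction to an eigenvalue $N/\alpha$ from the hierarchy $B_{n+1}=B_n^{2^n}1B_n^{2^n}$. Write $p_n,q_n$ for the numbers of $0$'s and $1$'s in $B_n$; then $p_{n+1}=2^{n+1}p_n$ (so $p_{n+k}/p_n=2^{kn+k(k+1)/2}$), the $\mathbb R$-patch coded by $B_n$ has length $L_n=p_n+q_n\alpha$, and $L_{n+1}=2^{n+1}L_n+\alpha$. An eigenvalue $\lambda$ of the flow is a measurable $h\colon X\to S^1$ with $h\circ T=e^{2\pi i\lambda f}h$, hence $h\circ T^M=e^{2\pi i\lambda S_M}h$ with $S_M(x)=\sum_{j<M}f(T^jx)$ the length of the patch coded by $x_0\cdots x_{M-1}$. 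The subshift $(X,T)$ is rigid along $(|B_n|)$ --- immediate from $B_{n+1}=B_n^{2^n}1B_n^{2^n}$, since positions within $|B_n|$ of an inserted $1$ have density $O(2^{-n})$; this is the rigidity of \cite{JR} --- and on the corresponding set of (measure close to) full measure $x_0\cdots x_{|B_n|-1}$ is a cyclic rotation of $B_n$, so $S_{|B_n|}=L_n$ there, giving $\|\lambda L_n\|\to 0$. Iterating $h\circ T=e^{2\pi i\lambda f}h$ through the $2^{n+1}$ consecutive copies of $B_n$ inside $B_{n+1}$, which sit at flow-heights $(j-1)L_n$ for $1\le j\le 2^n$ and $(j-1)L_n+\alpha$ for $2^n<j\le 2^{n+1}$, and using the $L^2$-criterion for eigenvalues of such a rank-one system, I would conclude that $\lambda=N/\alpha$ can be an eigenvalue only if $\tfrac1{2^{n+1}}\sum_{j=0}^{2^{n+1}-1}\bigl\|\,jNp_n/\alpha\,\bigr\|^2\to 0$ --- here $\lambda\alpha=N\in\mathbb Z$ makes the spacer of length $\alpha$ contribute nothing and $\lambda L_n\equiv Np_n/\alpha\pmod 1$.

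The arithmetic heart is that $\theta_n:=\|Np_n/\alpha\|$ cannot satisfy $\theta_n<2^{-(n+2)}$ for all large $n$: from $Np_{n+k}/\alpha=2^{kn+k(k+1)/2}\,Np_n/\alpha$ one gets $\theta_{n+k}\le 2^{kn+k(k+1)/2}\theta_n$, and requiring $\theta_{n+k}<2^{-(n+k+2)}$ for every $k\ge 0$ forces $\theta_n=0$, i.e.\ $\alpha\in\mathbb Q$, a contradiction. Hence $\theta_n\ge 2^{-(n+2)}$ for infinitely many $n$; since $\|\phi\|\ge\tfrac1{2M}$ implies $\tfrac1M\sum_{j=0}^{M-1}\|j\phi\|^2\ge c_0$ for an absolute constant $c_0>0$ (the points $j\phi$, $0\le j<M$, sweep at least half a period of the circle), taking $M=2^{n+1}$ and $\phi=Np_n/\alpha$ shows the average of the previous paragraph stays $\ge c_0$ along that subsequence, contradicting its convergence to $0$. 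So no $N/\alpha$ with $N\ne 0$ is an eigenvalue, and $T_\alpha$ is ergodic. I expect the main obstacle to be the step in the middle --- turning $T_\alpha$-invariance (equivalently the cohomological equation $h\circ T=e^{2\pi i\lambda f}h$) into that $L^2$-obstruction; this is where the argument should follow the proof of Lemma~1 in \cite{JP}, building the Rokhlin castle over $\Sigma_n=\{$tilings whose origin tile begins a $B_n$-block$\}$ (which exhausts $X_{\mathcal T}$ up to measure $O(\alpha/L_{n+1})$) and tracking how the $L^1$-approximation error, the size of the complement of the castle, and the Diophantine quality of $Np_n/\alpha$ play off against one another as $n\to\infty$.
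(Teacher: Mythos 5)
Your route is genuinely different from the paper's. The paper proves the lemma directly and combinatorially: it shows that $T_\alpha^k(\mathcal S)$ visits each cylinder set with a frequency independent of $\mathcal S$ (unique ergodicity of $T_\alpha$), the key observation being that every spacer in the patch decomposition is a single $J_1$ of length exactly $\alpha$, so one application of $T_\alpha$ ``absorbs'' a spacer without changing the relative position inside the surrounding $R_n$-blocks; the orbit positions therefore track an irrational rotation by $\alpha$ on $[0,|R_n|)$, whose unique ergodicity does the rest. You instead reduce ergodicity of $T_\alpha$ to the absence of flow-eigenvalues in $\tfrac1\alpha\mathbb Z\setminus\{0\}$ and attack those by a Diophantine argument. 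Your outer layers are sound: the Fourier reduction $\bar g_k\circ T_t=e^{2\pi ikt/\alpha}\bar g_k$ is correct, the rigidity along $|B_n|$ does give $\|\lambda L_n\|\to 0$ with $\lambda L_n\equiv Np_n/\alpha\pmod 1$, and the arithmetic at the end (the recursion $\theta_{m+1}=\|2^{m+1}\theta_m\|$ forcing $\theta_n\ge 2^{-(n+2)}$ infinitely often unless $\alpha\in\mathbb Q$) is essentially right.

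The genuine gap is exactly where you flagged it, and it is not optional: the implication ``$\lambda=N/\alpha$ is an eigenvalue $\Rightarrow \tfrac1{2^{n+1}}\sum_{j<2^{n+1}}\|\,j\lambda L_n-c_n\|^2\to 0$'' is the entire content of the proof and is nowhere derived. It cannot be bypassed by what you actually establish, because $\|\lambda L_n\|\to 0$ alone is compatible with your arithmetic conclusion: $\theta_n\ge 2^{-(n+2)}$ infinitely often does not contradict $\theta_n\to 0$, so without the quantitative $L^2$ statement no contradiction is reached (and for suitable Liouville-type $\alpha$ one really can have $\|Np_n/\alpha\|\to 0$). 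Deriving the $L^2$ criterion requires building the Rokhlin castle over the $B_{n+1}$-blocks, showing the eigenfunction is nearly constant on the base, and propagating the eigenvalue equation through the $2^{n+1}$ columns at heights $(j-1)L_n$ and $(j-1)L_n+\alpha$ while controlling the measure of the castle's complement --- i.e.\ reproving Lemma~1 of \cite{JP} in this setting. Two smaller points: the criterion must be stated with a free constant $c_n$ (near-constancy on the base forces the column phases to be mutually close, not close to $1$), which your arithmetic step as written does not accommodate; and the claim that $\|\phi\|\ge\tfrac1{2M}$ forces $\tfrac1M\sum_{j<M}\|j\phi-c\|^2\ge c_0$ needs an argument covering $\|\phi\|$ bounded away from $0$ (the ``sweeps half a period'' picture only treats small $\|\phi\|$); both are fixable but must be addressed. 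The paper's argument avoids all of this machinery.
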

 \begin{proof}
 Let $h_m = l(B_m)$. 
For each $m$, denote $B_m = p_1^m \cdots p_{h_m}^m$ and form a new alphabet $p_i^{m}$ ($1 \leq i \leq h_m$) as in the proof of Theorem \ref{msj theorem}. Then the countable set of patches 
\begin{equation*}
\begin{split}
\{[p_i^m] \times [0,r) : r \in \mathbb{Q} \,\, &\textrm {with} \,\, 0 < r < |J_a| \,\, \textrm {for} \,\, p_i^m = a \\
    &\textrm{or} \,\, 0 < r < |J_b| \,\, \textrm {for} \,\, p_i^m = b \,\, , 1 \leq i \leq l(A_m), m \in N \}
 \end{split}
\end{equation*}   
generates $\mathcal{D}$.

Consider a cylinder set $P = [p_i^n] \times [0,r)$. Pick a tiling $\mathcal{S} \in X_{{\mathcal{T}}}$.
We will show that $T_{\alpha}$ satisfies the ergodic theorem by showing that  $ T_{\alpha}^k (\mathcal{S})$ hits the cylinder set $P$ with the right frequency.

Let $R_n := I_1I_2 \cdots I_{h_n}$ be the patch corresponding to the block $B_n = x_1 x_2 \cdots x_{h_n}$ such that $I_i$ is equivalent to $J_{x_i}$. 
 For large $s$, consider a patch $R_s$. By Lemma 1.2 in \cite{JR}, for a fixed $n < s $, $R_s$ can be written uniquely as a concatenation of strings of the form $R_n^{2^n}$ or $R_n^{2^{n+1}}$, separated by single $J_1$s.  
 
  We estimate the number of $T_{\alpha}^k({\mathcal{S}})$ hitting the cylinder set $P$ by considering the translation $S_{\alpha}: x \rightarrow x+ \alpha$ on the patch $R_s$. There exists an interval $L \subset R_n$ and a point $y \in R_s$ such that $T_{\alpha}^k({\mathcal{S}})$ hitting the cylinder set $P$ along the patch $R_s$ is the same as $S_{\alpha}^k(x)$ hitting the interval $L$. 
  Then the sequence of numbers where $T_{\alpha}^k({\mathcal{S}})$ hits the cylinder set $P$ in $R_s$ converges as $s \rightarrow \infty$ since 
  \begin{enumerate}
  \item the irrational rotation $x \rightarrow x+ \alpha$ on the torus $[0, |R_n|)$ is uniquely ergodic,
  \item $S_{\alpha}^k(x)$ hits $J_{1}$ only once whenever it meets the interval $J_{1}$,
  \item $J_{1}$ appears in the patch $R_s$ with uniform frequency, which tends to $0$ as $s \rightarrow \infty$, 
  \item If $x'$ and $x''$ have the same relative positions in the first $R_n$ of the patches $R_nR_n$ and $R_nJ_1R_n$ respectively, then the relative positions of $S_{\alpha}^k(x')$ and $S_{\alpha}^{k+1}(x'')$ have the same relative positions when they occur in the second $R_n$ of the $R_nR_n$ and $R_nJ_1R_n$ respectively, since $|J_1| = \alpha$.  
  \end{enumerate}
 Hence the frequency $$\lim_{N \rightarrow \infty}\frac{1}{N} \sum_{n=1}^N 1_{P}(T_{\alpha}^{-k}(\mathcal{S}))= \lim_{N \rightarrow \infty}\frac{|\{ 1 \leq k \leq N : T_{\alpha}^k(\mathcal{S}) \in P\}|}{N}$$ exists.
 \end{proof}

\begin{proof}[Proof of Theorem \ref{rigid}] 

 Let $h_n = l(B_n)$ be the length of the block $B_n$. Note that
 \begin{equation}
 \label{length}
 h_{n+1} = 2^{n+1}h_n +1 \,\,\, \textrm{and} \,\,\, \lim\limits_{n \rightarrow \infty} h_n \mu([B_n]) = 1. 
 \end{equation}
The second equation is obtained from Lemma 1.2 and Corollary 1.7 in \cite{JR}:
By Lemma 1.2 therein, $h_n \mu([B_n]) = \bigcup\limits_{i=0}^{h_{n-1}} T^{i}([B_n])$ and by Corollary 1.7 of that paper  we have $$\lim_{n \rightarrow \infty} \mu(X \backslash \bigcup\limits_{i=0}^{h_{n-1}} T^{i}([B_n])) =0.$$

 Let $\mathcal{T}_n$ be the patch corresponding to the block $B_n$ and let $t_n$ be the diameter of $\mathcal{T}_n$. Let $\alpha_n$ and $\beta_n$ be the numbers of $0$s and $1$s in $B_n$. Note that $\alpha_1= 2, \beta_1= 1$, $\alpha_{n+1} = 2^{n+1} \alpha_n$ and $\beta_{n+1} = 1 + 2^{n+1} \beta_n$. Then $(\frac{\beta_n}{\alpha_n})$ is a bounded increasing sequence, since $\frac{\beta_{n+1}} {\alpha_{n+1}} = \frac{1}{2^{n+1}\alpha_{n}} + \frac{\beta_n}{\alpha_n} \leq \sum_{k=1}^n \frac{1}{2^k}=1$. Thus, 
\begin{equation}
\label{c}
\lim\limits_{n \rightarrow \infty} \frac{t_n}{h_n}
= \lim\limits_{n \rightarrow \infty} \frac{\alpha_n |J_0| + \beta_n |J_1|}{\alpha_n + \beta_n} 
=  \lim\limits_{n \rightarrow \infty} \frac{|J_0| +  (\beta_n / \alpha_n)|J_1|}{ 1+ (\beta_n / \alpha_n) } = c,
\end{equation}
for some $c>0$.

 Let us show first that $\bold{X}$ is rigid. 
 Let $C = [u] \times [0,r)$ for a finite word $u$ and  some $0 < r < |J_{u_0}|$. Since the measures of $[u]$ and $C$ are given by the occurence frequency, (\ref{tiling measure}) in Section 3.2 holds, so 
 $$\nu(C) = \nu ([u] \times [0,r)) = \mu([u]) \frac{r}{\mu([0])|J_0| + \mu([1])|J_1|}.$$
 
  Given $y=(y_n) \in X$, let $\mathcal{T}_y:= \{I_i : i \in \mathbb{Z}\}$ be the tiling of $\mathbb{R}$ corresponding the sequence $y$. 
  If $x \in [u] \cap T^{h_n} [u]$, then $T_t(\mathcal{T}_x) \in C \cap T_{t_n}C$ for $0 \leq t <r$, which implies that 
$$\nu(C \cap T_{t_n}C) \geq \mu([u] \cap T^{h_n}[u]) \frac{r}{\mu([0]) |J_0| + \mu([1]) |J_1|}.$$ 
 
 From Corollary 1.9 in \cite{JR}, $(h_n)$ is a rigidity sequence for $(X,T)$, so  $\lim\limits_{n \rightarrow \infty} 1_{[u]} \circ T^{h_n} = 1_{[u]}$ for any finite word $u$ in $X$. This implies that
$$\liminf \nu(C \cap T_{t_n} C) \geq \mu[u] \frac{r}{\mu([0]) |J_0| + \mu([1]) |J_1|} = \nu(C),$$ 
so $\bold{X}$ is rigid.

Now let us prove that $\bold{X} = (X_{\mathcal{T}}, \mathcal{D}, \nu, (T_t)_{(t \in \mathbb{R})})$ is weakly mixing. 
Let $f$ be an eigenfunction of $(T_t)_{t \in \mathbb{R}}$ with $T_t f = e^{2 \pi i \lambda t} f$ and $|f|=1$. Since $\{ B_n \times [a,b): n \in \mathbb{N}, a,b \in \mathbb{Q}^{+}\}$ generates the Borel $\sigma$-algebra $\mathcal{D}$, given $\epsilon >0$ we can find a positive integer $n$ and a bounded measurable function $f'$ such that $f'$ is a finite linear combination of indicator functions of cylinder sets and $\int |f - f'| \, d \nu \leq \epsilon$. Note that $f'$ is constant on $B_n \times \{ s \}$ for any $s$. 

Let $E_n = \{ \mathcal{S} : \mathcal{S} \in B_n^{2^n} 1 B_n^{2^n} \times [0, 2^n t_n) \}$ and $F_n =  \{ \mathcal{S} : \mathcal{S} \in B_n^{2^n} B_n^{2^n} \times [0, 2^n t_n) \}$. Note that $T_{2^n t_n + \alpha} f'(\mathcal{S}) = f'(\mathcal{S})$ for $\mathcal{S} \in E_n$ and $T_{2^n t_n} f'(\mathcal{S}) = f'(\mathcal{S})$ for $\mathcal{S} \in F_n$. 
Let us show that
 $\liminf \nu(E_n) \geq \frac{1}{2}d$ and $\liminf \nu(F_n) \geq \frac{1}{2}d$, where $d= \frac{c}{ \mu([0])||J_0|+\mu([1])|J_1|}$.
Since $B_{n+1}=B_n^{2^n} 1 B_n^{2^n}$, 
$$ \nu(E_n) \geq \mu(B_{n+1}) \frac{2^n t_n} { \mu([0])|J_0|+\mu([1])|J_1|}= \frac{1}{2} \mu(B_{n+1}) h_{n+1} \frac{2^{n+1} h_n}{h_{n+1}} \frac{t_n/h_n}{\mu([0])|J_0|+\mu([1])|J_1|}.$$
By (\ref{length}), we have $\lim\limits_{n \rightarrow \infty} \mu(B_{n+1}) h_{n+1} =1$ and $\lim\limits_{n \rightarrow \infty} \frac{2^{n+1}h_n}{h_{n+1}} = 1$. By (\ref{c}), we have $\lim\limits_{n \rightarrow \infty} \frac{t_n}{h_n} = c$. Thus,
$$\liminf\limits_{n \rightarrow \infty} \nu(E_n) \geq \frac{1}{2}d.$$
Note that  $B_{n+1}B_{n+1}= B_n^{2^n} 1 B_n^{2^n} B_n^{2^n} 1 B_n^{2^n}$ contains $B_n^{2^n}  B_n^{2^n}$ and the length of the patch corresponding to the block $B_n^{2^n}$ is $2^n t_n$, so $$\nu(F_n) \geq \mu(B_{n+1}B_{n+1}) \frac{2^n t^n}{\mu([0])|J_0|+\mu([1])|J_1|}.$$
Also $B_{n+1}B_{n+1}$ occurs $2(2^{n+1}-1)$-times in $B_{n+2}$, so $\mu(B_{n+1}B_{n+1}) \geq \mu(B_{n+2}) 2(2^{n+1}-1)$. Hence,
\begin{eqnarray*}
\nu(F_n) &\geq& \mu(B_{n+1}B_{n+1}) \frac{2^n t^n}{ \mu([0])|J_0|+\mu([1])|J_1|} \geq \mu(B_{n+2}) 2(2^{n+1}-1) \frac{2^n t_n}{ \mu([0])|J_0|+\mu([1])|J_1|} \\
&=& \frac{1}{2} \mu(B_{n+2}) h_{n+2} \frac{(2^{n+2}-2)2^{n+1}h_{n}}{h_{n+2}}  \frac{t_n/h_n}{ \mu([0])|J_0|+\mu([1])|J_1|}.
\end{eqnarray*}
By (\ref{length}), we have $\lim\limits_{n \rightarrow \infty} \mu(B_{n+2}) h_{n+2} =1$ and $\lim\limits_{n \rightarrow \infty}  \frac{(2^{n+2}-2)2^{n+1}h_{n}}{h_{n+2}} = \frac{(2^{n+2}-2)h_{n+1}}{h_{n+2}} \frac{2^{n+1}h_n}{h_{n+1}} =1$. By (\ref{c}), we have $\lim\limits_{n \rightarrow \infty} \frac{t_n}{h_n} = c$. Hence,
 $$\liminf\limits_{n \rightarrow \infty}\nu(F_n) \geq \frac{1}{2}d.$$
Then,
\begin{eqnarray*}
|1 - e^{2 \pi i \lambda (2^n t_n + \alpha)}| \, \nu(E_n) &=& \int_{E_n} |f - T_{2^n t_n+ \alpha} f | \, d \nu\\
&\leq& \int_{X_{\mathcal{T}}} |f - f'| \, d \nu + \int_{E_n} |f'- T_{2^n t_n+ \alpha}f' | \, d \nu +  \int_{X_{\mathcal{T}}} |T_{2^n t_n+ \alpha}f' - T_{2^n t_n+ \alpha}f | \, d \nu\\
&\leq& \int_{X_{\mathcal{T}}} |f - f'| \, d \nu + \int_{X_{\mathcal{T}}} |T_{2^n t_n+ \alpha}f' - T_{2^n t_n+ \alpha}f | \, d \nu   \leq 2 \epsilon,
\end{eqnarray*}
and 
\begin{eqnarray*}
|1 - e^{2 \pi i \lambda 2^n t_n}| \, \nu(F_n) &=& \int_{F_n} |f - T_{2^n t_n} f | \, d \nu\\
&\leq& \int_{X_{\mathcal{T}}} |f - f'| \, d \nu+ \int_{F_n} |f'- T_{2^n t_n}f' | \, d \nu +    \int_{X_{\mathcal{T}}} |T_{2^n t_n}f' - T_{2^n t_n}f| \, d \nu \\
&\leq& \int_{X_{\mathcal{T}}} |f - f'| \, d \nu + \int_{X_{\mathcal{T}}} |T_{2^n t_n}f' - T_{2^n t_n}f| \, d \nu \leq 2 \epsilon.
\end{eqnarray*}
Hence,
\begin{equation*}
|e^{2 \pi i  \lambda \alpha} - 1 | = |e^{2 \pi i \lambda (2^n t_n + \alpha)} - e^{2 \pi i \lambda 2^n t_n}| \leq |1 - e^{2 \pi i \lambda (2^n t_n + \alpha)}| + |1 - e^{2 \pi i \lambda 2^n t_n }| 
\leq \frac{2\epsilon}{\nu(E_n)} + \frac{2 \epsilon}{\nu(F_n)}. 
\end{equation*}
From  $\liminf\limits_{n \rightarrow \infty} \nu(E_n) \geq \frac{1}{2}d$ and $\liminf\limits_{n \rightarrow \infty} \nu(F_n) \geq \frac{1}{2}d$,
$$|e^{2 \pi i  \lambda \alpha} - 1 | \leq \frac{8}{d}\epsilon.$$
Since $\epsilon$ is arbitrarily small, $e^{2 \pi i  \lambda \alpha} = 1$. Then $T_{\alpha} f = f$. Ergodicity of $T_{\alpha}$ implies that $f$ is constant. Hence the dynamical system is weakly mixing.
\end{proof}

\renewcommand{\abstractname}{Acknowledgements}
\begin{abstract}
The author would like to thank Vitaly Bergelson for suggesting this research and for many useful discussions. The author also wishes to thank Donald Robertson for helpful comments on this paper.
\end{abstract}

\end{document}